 \newtheorem{definition}{Definition} [section]       
 \newtheorem{remark}[definition]{Remark}
 \newtheorem{problem}[definition]{Problem}  
 \newtheorem{comment}[definition]{Comment}
 \newtheorem{proposition}[definition]{Proposition}       
 \newtheorem{theorem}[definition]{Theorem}       
 \newtheorem{corollary}[definition]{Corollary}       
  \newtheorem{lemma}[definition]{Lemma}
\numberwithin{equation}{section}
\newcommand{\Ind}{\mbox{\rm Ind}}
\newcommand{\Res}{\mbox{\rm Res}}
\newcommand{\Stab}{\mbox{\rm Stab}}
\newcommand{\Sym}{\mbox{\rm Sym}}
\newcommand{\Hom}{\mbox{\rm Hom}}
\def\NN{{\mathbb{N}}}
\def\RR{{\mathbb{R}}}
\def\ZZ{{\mathbb{Z}}}
\def\FF{{\mathbb{F}}}%{{\cal F}}
\def\CC{{\mathbb{C}}}
\def\C{{\mathbb{C}}}
\def\HomS{\text{\rm Hom}^\text{\rm Sym}}
\def\HomA{\text{\rm Hom}^\text{\rm Skew}}
\renewcommand{\dim}{\mbox{\rm dim}}
\newcommand{\inv}{\text{\rm inv}}
\newcommand{\Id}{\mbox{\rm Id}} 
\newcommand{\Aut}{\mbox{\rm Aut}} 
\newcommand{\Sp}{\mbox{\rm Sp}} 
\newcommand{\GL}{\mbox{\rm GL}} 
\newcommand{\PSL}{\mbox{\rm PSL}} 
\newcommand{\SL}{\mbox{\rm SL}} 
\newcommand{\SU}{\mbox{\rm SU}} 
\newcommand{\SO}{\mbox{\rm SO}} 
\newcommand{\OO}{\mbox{\rm O}} 
\newcommand{\U}{\mbox{\rm GU}} 
\newcommand{\GU}{\mbox{\rm GU}}
\newcommand{\CL}{{\mathbb{CL}}}
\begin{document}

%\title{Advanced Mackey theory for finite groups}
\title{Mackey's theory of $\tau$-conjugate representations for finite groups} 
\author{Tullio Ceccherini-Silberstein}
\address{Dipartimento di Ingegneria, Universit\`a del Sannio, C.so
Garibaldi 107, 82100 Benevento, Italy}
\email{tceccher@mat.uniroma3.it}
\author{Fabio Scarabotti}
\address{Dipartimento SBAI, Sapienza Universit\`a  di Roma, via A. Scarpa 8, 00161 Roma, Italy}
\email{fabio.scarabotti@sbai.uniroma1.it}
\author{Filippo Tolli}
\address{Dipartimento di Matematica e Fisica, Universit\`a Roma TRE, L. San Leonardo Murialdo 1, 00146  Roma, Italy}
\email{tolli@mat.uniroma3.it}
\subjclass{20C15, 43A90, 20G40}
\keywords{Representation theory of finite groups, Gelfand pair, Kronecker product, simply reducible group, Clifford groups, Frobenius-Schur theorem}

\begin{abstract}
The aim of the present paper is to expose two contributions of Mackey, together with a more recent result of Kawanaka and Matsuyama, generalized by Bump and Ginzburg, on the representation theory of a finite group
equipped with an involutory anti-automorphism (e.g. the anti-automorphism $g\mapsto g^{-1}$).
Mackey's first contribution is a detailed version of the so-called Gelfand criterion for weakly symmetric Gelfand pairs. Mackey's second contribution is a characterization of simply reducible groups
(a notion introduced by Wigner).
The other result is a twisted version of the Frobenius-Schur theorem, where ``twisted" refers to the above-mentioned involutory anti-automorphism.
\end{abstract}
\date{\today}
\maketitle 

\tableofcontents
\section{Introduction}
Finite Gelfand pairs not only constitute a useful tool for analyzing a wide range of problems ranging from combinatorics, to orthogonal
polynomials and to stochastic processes, but may also be used to shed light into theoretical problems of representation theory. The simplest example is provided by the possibility to recast the decomposition of the group algebra of a given finite group $G$, together with the associated harmonic analysis, by using the action on $G$ of the direct product $G\times G$. Another example comes from the application of Gelfand pairs in the theory of multiplicity free groups, a key tool in the recent approach of Okounkov and Vershik to the representation theory of the symmetric groups \cite{OV1,OV2} (see also \cite{book2}).

Let $G$ be  a finite group.
Recall that the \emph{conjugate} of a (unitary) representation $(\rho,V)$ of  $G$,
is the $G$-representation $(\rho',V')$ where  $V'$ is the dual of $V$ and $[\rho'(g)v'](v) = v'[\rho(g^{-1})v]$
for all $g \in G$, $v \in V$, and $v'\in V'$. One then says that $\rho$ is \emph{self-conjugate} provided
$\rho \sim \rho'$; this is in turn equivalent to the associated character $\chi_\rho$ being real-valued. When $\rho$ is not self-conjugate, one says that it is {\it complex}.
The class of self-conjugate $G$-representations splits into two subclasses according to the associated matrix
coefficients of the representation $\rho$ being real-valued or not: in the first case, one says that $\rho$ is
\emph{real}, in the second case $\rho$ is termed \emph{quaternionic}.

Now let $K\leq G$ be a subgroup and denote by $X = G/K$ the corresponding homogeneous space of left cosets of $K$ in $G$. Setting $L(X) = \{f:X \to \CC\}$, denote by $(\lambda,L(X))$ the corresponding \emph{permutation representation} 
defined by $[\lambda(g)f](x) = f(g^{-1}x)$ for all $g \in G$ and $f \in L(X)$. Recall that $(G,K)$ is a \emph{Gelfand
pair} provided the permutation representation $\lambda$ decomposes \emph{multiplicity-free}, that is,
\begin{equation}
\label{eq:L-X-intro}
\lambda = \bigoplus_{i \in I} \rho_i
\end{equation} 
with $\rho_i \not\sim \rho_j$ for $i \neq j$. It is well known that 
if $g^{-1} \in KgK$ for all $g \in G$, then $(G,K)$ is a Gelfand pair; in this case all representations $\rho_i$ in \eqref{eq:L-X-intro} are real, and one then says that $(G,K)$ is \emph{symmetric}. This last terminology
is due to the fact that the $G$-orbits on $X \times X$ under the diagonal action are symmetric (with respect
to the flip $(x_1,x_2) \mapsto (x_2,x_1)$, $x_1,x_2 \in X$).

A remarkable classical problem in representation theory is to determine the decomposition of the tensor
product of two (irreducible) representations. In particular, one says that $G$ is \emph{simply reducible} if
(i) $\rho_1 \otimes \rho_2$ decomposes multiplicity free for all irreducible $G$-representations $\rho_1$ and $\rho_2$
and (ii) every irreducible $G$-representation  is self-conjugate.

The class of simply reducible groups was introduced by E. Wigner \cite{wigner} in his research on group representations and quantum mechanics. This notion is quite useful since many of the symmetry groups one encounters in atomic and molecular systems are simple reducible, and algebraic manipulations of tensor operators become much easier for such groups. Wigner wrote:``The groups of most eigenvalue problems occurring in quantum theory are S.R'' (where ``S.R.'' stands for ``simply reducible'') having in mind the study of ``small perturbation'' of the 
``united system'' of two eigenvalue problems invariant under some group $G$ of symmetries. Then simple reducibility guarantees that the characteristic functions of the eigenvalues into which the united system splits can be determined in ``first approximation'' by the invariance of the eigenvalue problem under $G$. This is the case, for instance, for the \emph{angular momentum} in quantum mechanics.
We mention that the multiplicity-freeness of the representations in the definition of simply reducible groups is the condition for the validity of the well known \emph{Eckart-Wigner theorem} in quantum mechanics.
Also, an important task in spectroscopy is to calculate matrix elements in order to determine energy spectra and transition intensities. One way to incorporate symmetry considerations connected to a group $G$ or rather a pair 
$(G,H)$ of groups, where $H \leq G$, is to use the \emph{Wigner-Racah} calculus associated with the inclusion under consideration: this is generally understood as the set of algebraic manipulations  concerning the coupling and the coupling coefficients for the group $G$. The Wigner-Racah calculus was originally developed for simply reducible groups 
\cite{racah1, racah2, wignerbook, wigner-matrices} and, later, for some other groups of interest in nuclear, atomic, molecular, condensed matter physics \cite[Chapter 5]{hamermesh} as well as in quantum chemistry \cite{griffiths}. 

Returning back to purely representation theory, Wigner \cite{wigner} listed the following examples of simply reducible groups: the symmetric groups $S_3$ ($\cong D_3$) and $S_4$ ($\cong T_h$),  the quaternion
group $Q_8$ and the \emph{rotational groups} $\OO(3)$,  $\SO(3)$ or $\SU(2)$. More generally, it is nowdays known (cf. \cite[Appendix 3.A]{clementi}) that most of the \emph{molecular symmetry groups} such as (using Schoenflies notation)
$D_{\infty h}, C_{\infty v}, C_{2 v}, C_{3, v}, C_{2 h}, D_{3 h}, D_{3 d}, D_{6 h}, T_d$ and $O_h$ are simply reducible.
On the other hand, the \emph{icosahedral group} $I_h$ is not simply reducible, although it possesses only real characters.

In the automorphic setting, Prasad \cite{prasad} implicitly showed that if $k$ is a local field then the (infinite)
group $G = \GL(2,k)$ is simply reducible. Indeed, he proved that the number of $G$-invariant linear forms
on the tensor product of three admissible representations of $G$ is at most one (up to scalars).
This is also discussed in Section 10 of the survey article by Gross (Prasad's advisor) \cite{gross}, 
on Gelfand pairs and their applications to number theory.

We also mention that simply reducible groups are of some interest also in the theory of \emph{association schemes}
(see \cite[Chapter 2]{BI}).

As pointed out by A.I. Kostrikin in \cite{kostrikin}, there is no complete description of all simply reducible groups. Strunkov investigated simple reducibility in \cite{strunkov} and  suggested (cf.  \cite[Problem 11.94]{KO} in the Kourovka notebook) that the simply reducible groups must be solvable.
After some partial results by Kazarin and Yanishevski$\breve{{\rm \i}}$ \cite{kazarin-Y}, this conjecture was settled by Kazarin and Chankov \cite{KC}.

Wigner \cite{wigner} gave a curious criterion for simply reducibility. He showed that, denoting by $v(g) = |\{h \in G: hg=gh\}|$ (resp. $\zeta(g) = |\{h \in G: h^2 = g\}|$) the cardinality of the centralizer (resp. the number of square roots) of an element $g \in G$, then the equality
\begin{equation}
\label{eq:wigner-intro}
\sum_{g \in G} \zeta(g)^3 = \sum_{g \in G} v(g)^2
\end{equation} 
holds if and only if $G$ is simply reducible. 

A fundamental theorem of Frobenius and Schur \cite{FS} provides a criterion for determining the type of a given 
irreducible representation $\rho$, namely
\begin{equation}
\label{eq:Frobenius-introduction}
\frac{1}{|G|}\sum_{g \in G} \chi_\rho(g^2) = 
\begin{cases} 
1 & \mbox{ if $\rho$ is real}\\
-1 & \mbox{ if $\rho$ is quaternionic}\\
0 & \mbox{ if $\rho$ is complex}
\end{cases}
\end{equation}
see, for instance, \cite[Theorem 9.7.7]{book}.
Moreover, the number $h$ of pairwise inequivalent irreducible self-conjugate $G$-representations is given by
\begin{equation}
\label{eq:number-sc-introduction}
h = \frac{1}{|G|}\sum_{g \in G} \zeta(g)^2
\end{equation}
(cf.
\cite[Theorem 9.7.10]{book}).

In this research-expository paper, following Mackey \cite{mackey-mf}, Kawanaka and Matsuyama \cite{kawanaka}, and
Bump and Ginzburg \cite{bump-ginzburg}, we endow $G$ with an involutory anti-automorphism $\tau \colon G \to G$.
Mackey in \cite{mackeysym} originally analyzed only the case when $\tau$ is the anti-automorphism $g\mapsto g^{-1}$ and then, in \cite{mackey-mf}, generalized his results by considering any involutory anti-automorphism. The proofs are even simpler but heavily rely on \cite{mackeysym} (the reader cannot read the second paper without having at hand the first one). Here we give a complete and self-contained treatment of all principal results in \cite{mackey-mf}, providing more details and using modern notation. 

We then present in Theorem \ref{t1TFS} (Twisted Frobenius-Schur theorem) the main result of
Kawanaka and Mastuyama in \cite{kawanaka}. Our proof follows the lines indicated in Bump's monograph  \cite[Exercise 4.5.1]{Bump} but also heavily uses the powerful machinery of A.H. Clifford theory specialized for subgroups of index two (see, for instance, \cite[Section 3]{Clifford}). Note that Bump and Ginzburg \cite{bump-ginzburg} consider further generalizations involving anti-automorphisms of finite order (i.e. not necessarily involutive). 

Let $\tau: G \to G$ be an involutory anti-automorphism.

Given a $G$-representation $(\rho,V)$, we then define its $\tau$-\emph{conjugate} as the $G$-representation $(\rho^\tau, V')$ defined by setting $[\rho^\tau(g)v'](v) = v'[\rho(\tau(g))v]$ for all $g \in G$, $v \in V$, and $v'\in V'$. Then we introduce (cf. \cite{kawanaka}) the associated $\tau$-\emph{Frobenius-Schur number}
(or $\tau$-\emph{Frobenius-Schur indicator}) $C_\tau(\rho)$ defined by
\[
C_\tau(\rho)=\dim\HomS_G(\rho^\tau,\rho)-\dim\HomA_G(\rho^\tau,\rho)
\]
where $\HomS_G$ (resp. $\HomA_G$) denotes the space of symmetric (resp. antisymmetric) intertwining operators,
and show that, if $\rho$ is irreducible, it may take only the three values $1$, $-1$, and $0$.

Given a subgroup $K$, we consider the $\tau$-\emph{conjugate} $\lambda^\tau$ of the associated permutation representation. Suppose that $\lambda^\tau \sim \lambda$ (note that this is always the case if $K$ is $\tau$-invariant,
i.e., $\tau(K) = K$), then we present a characterization (the Mackey-Gelfand criterion, see Theorem \ref{t:3MF}) of the corresponding analogue of ``symmetric Gelfand pair'' that we recover as a particular case.

We say that $G$ is \emph{$\tau$-simply reducible} provided (i) $\rho_1 \otimes \rho_2$ is multiplicity-free 
and (ii) $\rho^\tau \sim \rho$, for all irreducible $G$-representations $\rho_1, \rho_2$ and $\rho$.
We then present the Mackey criterion (Theorem \ref{t:4SRG1}) and the Mackey-Wigner criterion (Corollary \ref{c:10SR2})
for $\tau$-simple reducibility, a generalization of Wigner's original criterion we alluded to above (cf. \eqref{eq:wigner-intro}); the latter is expressed in terms of the equality 
\[
\sum_{g \in G} \zeta_\tau(g)^3 = \sum_{g \in G} v(g)^2.
\]

As an application of both the Mackey criterion and the Mackey-Wigner criterion, we present new examples of
$\tau$-simply reducible groups: in Section \ref{s:Clifford} we show that the (W.K.) Clifford groups $\CL(n)$ 
are $\tau$-simply reducible (where the involutive anti-automorphism $\tau$ of $\CL(n)$ is suitably defined according to the congruence class of $n$ modulo $4$).

Generalizing the characterization \eqref{eq:Frobenius-introduction}, we show (the Kawanaka and Matsuyama theorem 
(Theorem \ref{t1TFS}))
that 
\[
C_\tau(\rho) = \frac{1}{|G|}\sum_{g\in G}\chi_\rho(\tau(g)^{-1}g).
\]

Finally, in the last section, we present a twisted Frobenius-Schur type theorem in the context of Gelfand pairs (Theorem \ref{t:TFSGP}). This result, together with the ones on $\tau$-simple reducibility of the Clifford groups we alluded to above, constitutes our original contribution to the theory.\\

\section{Preliminaries and notation}
\label{s:preliminaries}
\subsection{Linear algebra}
\label{ss:linear-algebra}
In order to fix notation, we begin by recalling some elementary notions of linear algebra.
Let $V,W$ be finite dimensional complex vector spaces and denote by $V',W'$ their duals. 
%An element $f\in V'$ is a linear map $f:V\to\CC$ and we denote by $f(v)$ the canonical pairing. 
We denote by $\Hom(V,W)$ the space of all linear operators $A \colon V\to W$. 

If $A\in\Hom(V,W)$ its {\em transpose} is the linear operator $A^T:W'\to V'$ defined by setting 
\[
(A^Tw')(v)=w'(Av)
\]
for all $w'\in W'$ and $v\in V$. Let $Z$ be another finite dimensional complex vector space and suppose that
$B\in\Hom(V,W)$ and $A\in\Hom(W,Z)$. Then, it is immediate to check that $(AB)^T=B^TA^T$. Moreover, modulo the
canonical identification of $V$ and its bidual $V'' = (V')'$ (this is given by $v \leftrightarrow v''$ where
$v'' \in V''$ is defined by $v''(v') = v'(v)$ for all $v' \in V'$), we have $(A^T)^T = A$.
Given a basis $\{v_1,v_2,\dotsc, v_n\}$ in $V$, we denote by $\{v_1',v_2',\dotsc, v_n'\}$ the corresponding dual basis
of $V'$ which is defined by the conditions $v_j'(v_i)=\delta_{i,j}$ for $i,j=1,2,\dotsc,n$. 
Let now $\{w_1, w_2, \ldots, w_m\}$ be a basis for $W$. Let $M_A = (a_{ki})_{\substack{k=1,2,\ldots,m\\i=1,2,\ldots,n}}$ the matrix associated with the linear operator $A$, that is,  $Av_i=\sum_{k=1}^ma_{ki}w_k$, for all $i=1,2,\ldots,n$. Then $a_{ki}= w_k'(Av_i)$ and $A^Tw_k'=\sum_{i=1}^na_{ki}v_i'$;
in other words, the matrix $M_{A^T}$ associated with the transpose operator $A^T$ equals the transpose $(M_A)^t = (a_{ik})_{\substack{i=1,2,\ldots,n\\k=1,2,\ldots,m}}$ of the matrix $M_A$ associated with $A$.

Suppose now that $V$ is endowed with a hermitian scalar product denoted $\langle\cdot,\cdot\rangle_V$. 
The associated {\em Riesz map} is the antilinear bijective map $\theta_V \colon V \to V'$ defined by setting
\[
(\theta_V v)(u)=\langle u,v\rangle_V 
\]
for all $u,v\in V$.
Moreover, the {\em adjoint} of $A\in\Hom(V,W)$ is the (unique) linear operator $A^*\in\Hom(W,V)$ such that 
\[
\langle Av,w\rangle_W=\langle v,A^*w\rangle_V
\] 
for all $v\in V$ and $w\in W$. Observe that $(A^*)^* = A$. Also, the matrix $M_{A^*}$ associated with the adjoint operator $A^*$ equals the adjoint $(M_A)^* = (\overline{a}_{ik})_{\substack{i=1,2,\ldots,n\\k=1,2,\ldots,m}}$ of the matrix $M_A$ associated with $A$. 
Moreover, we say that $A$ is {\em unitary} if $A^*A=I_{V}$ and $AA^* = I_{W}$, where $I_{V} \in \Hom(V,V)$ denotes the identity map (note that a necessary condition for $A$ to be unitary is that $\dim(V) = \dim(W)$, i.e., $n=m$).

\begin{lemma}
Let $A\in\Hom(V,W)$. Then $A^T\theta_W=\theta_VA^*$.
\end{lemma}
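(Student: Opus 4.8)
The plan is to prove the identity by showing that the two linear maps $A^{T}\theta_{W}$ and $\theta_{V}A^{*}$, both of which send $W$ to $V'$, agree when evaluated on an arbitrary vector $w\in W$ and then tested against an arbitrary vector $v\in V$. Since an element of $V'$ is determined by its values on all of $V$, this suffices.

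Concretely, I would fix $w\in W$ and $v\in V$ and compute each side. For the left-hand side, unravelling the definition of the transpose gives $\bigl((A^{T}\theta_{W})w\bigr)(v)=\bigl(A^{T}(\theta_{W}w)\bigr)(v)=(\theta_{W}w)(Av)$, and then the defining property of the Riesz map $\theta_{W}$ turns this into $\langle Av,w\rangle_{W}$. For the right-hand side, the defining property of $\theta_{V}$ yields $\bigl((\theta_{V}A^{*})w\bigr)(v)=(\theta_{V}(A^{*}w))(v)=\langle v,A^{*}w\rangle_{V}$, and the defining property of the adjoint $A^{*}$ rewrites this as $\langle Av,w\rangle_{W}$. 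The two quantities coincide, so the maps are equal.

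There is essentially no obstacle here: the proof is a direct chain of definitions (transpose, Riesz map, adjoint), and the only point requiring a moment's care is bookkeeping of which space each scalar product lives in and the fact that $\theta_{V},\theta_{W}$ are antilinear, which does not interfere since we only ever apply their defining identities. I would present the computation as a short two-line display comparing $\bigl((A^{T}\theta_{W})w\bigr)(v)$ and $\bigl((\theta_{V}A^{*})w\bigr)(v)$, and conclude by the arbitrariness of $v$ and $w$.
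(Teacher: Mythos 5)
Your proof is correct and is essentially identical to the one in the paper: both evaluate $(A^T\theta_W w)(v)$ and $(\theta_V A^*w)(v)$ and show they equal $\langle Av,w\rangle_W=\langle v,A^*w\rangle_V$ via the definitions of the transpose, the Riesz map, and the adjoint. Nothing to add.
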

\begin{proof}
For all $v\in V$ and $w\in W$ we have:
\[
(A^T\theta_W w)(v)=(\theta_W w)(Av)=\langle Av,w \rangle_W=\langle v,A^*w \rangle_V=(\theta_V A^*w)(v).
\]
\end{proof}

We now define the {\em conjugate} of $A\in\Hom(V,W)$ as the linear operator $\overline{A}=(A^*)^T\in\Hom(V',W')$.
Then, the associated matrix $M_{\overline{A}}$ equals the conjugate $\overline{M_{A}} = (\overline{a}_{ki})_{\substack{k=1,2,\ldots,m\\i=1,2,\ldots,n}}$ of the matrix associated with $A$.
Note that $\overline{\overline{A}}=A$ and $A^*=\overline{A}^T$ (here we implicitly use the canonical identification
of $V$ with its bidual $V''$).
As a consequence, $A$ is unitary if and only if $A^T\overline{A}=I_{V'}$ and $\overline{A}A^T = I_{W'}$.

Suppose now that $A\in\Hom(V',V)$. Then, again modulo the canonical identification of $V$ and $V''$, we have 
$$
A^T\in\Hom(V',V) \mbox{\ \ and \ \  } u'(A^Tv')=v'(Au')
$$ 
for all $u',v'\in V'$. We then say that $A\in\Hom(V',V)$ is {\em symmetric}
(resp. {\em antisymmetric} or {\em  skew-symmetric}) if $A^T=A$ (resp. $A^T=-A$). We denote by $\HomS(V',V)$ (resp. $\HomA(V',V)$) the space of all symmetric (resp. antisymmetric) operators in $\Hom(V',V)$. We have the elementary identity
\[
A=\frac{A+A^T}{2}+\frac{A-A^T}{2},
\]
where $\frac{A+A^T}{2}$ is symmetric and $\frac{A-A^T}{2}$ is antisymmetric: note that this is the unique decomposition of $A$ as a sum of a symmetric operator and an antisymmetric operator. This yields the direct sum decomposition
\begin{equation}\label{decsymanti}
\Hom(V',V)=\HomS(V',V)\oplus\HomA(V',V).
\end{equation}

Let now $A\in\Hom[(V\oplus W)', V\oplus W]$. Then there exist $A_1\in\Hom(V',V)$, $A_2\in\Hom(W',W)$, $A_3\in\Hom(W',V)$ and $A_4\in\Hom(V',W)$ such that 
\begin{equation}\label{tullio invidioso}
A(v'+w')=(A_1v'+A_3w')+(A_4v'+A_2w'),
\end{equation}
for all $v'\in V'$, $w'\in W'$. In other words, identifying $A$ with the operator matrix $\begin{pmatrix}
A_1&A_3\\
A_4&A_2
\end{pmatrix}$, we may express \eqref{tullio invidioso} in matrix form
\begin{equation}\label{matrixnotation}
\begin{pmatrix}
A_1&A_3\\
A_4&A_2
\end{pmatrix}
\begin{pmatrix}
v'\\
w'
\end{pmatrix}
=
\begin{pmatrix}
A_1v'+A_3w'\\
A_4v'+A_2w'
\end{pmatrix}.
\end{equation}
Since 
\[
\begin{pmatrix}
A_1&A_3\\
A_4&A_2
\end{pmatrix}^T
=
\begin{pmatrix}
A_1^T&A_4^T\\
A_3^T&A_2^T
\end{pmatrix}
\]
we have that $A=A^T$ if and only if $A_1=A_1^T$, $A_2=A_2^T$, $A_3=A_4^T$ and $A_4=A_3^T$ and this proves the first statement of the following lemma (the proof of the second statement is similar).

\begin{lemma}\label{lemmadirectsum}
\begin{enumerate}[{\rm (1)}]

\item
The map
\[
\begin{pmatrix}
A_1&A_3\\
A_3^T&A_2
\end{pmatrix}\longmapsto
(A_1,A_2,A_3^T)
\]
yields the isomorphism
\[
\HomS\left[(V\oplus W)',V\oplus W\right]\cong \HomS(V',V)\oplus\HomS(W',W)\oplus\Hom(V',W).
\]

\item
The map
\[
\begin{pmatrix}
A_1&A_3\\
-A_3^T&A_2
\end{pmatrix}\longmapsto
(A_1,A_2,A_3^T)
\]
yields the  isomorphism
\[
\HomA\left[(V\oplus W)',V\oplus W\right]\cong \HomA(V',V)\oplus\HomA(W',W)\oplus\Hom(V',W).
\]

\end{enumerate}
\end{lemma}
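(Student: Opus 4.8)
The plan is to read both isomorphisms directly off the block-transpose identity
\[
\begin{pmatrix} A_1 & A_3 \\ A_4 & A_2 \end{pmatrix}^T = \begin{pmatrix} A_1^T & A_4^T \\ A_3^T & A_2^T \end{pmatrix}
\]
recorded just before the statement. For part (1) I would start from an arbitrary $A \in \HomS[(V\oplus W)', V\oplus W]$, write it in the block form \eqref{matrixnotation} with $A_1\in\Hom(V',V)$, $A_2\in\Hom(W',W)$, $A_3\in\Hom(W',V)$, $A_4\in\Hom(V',W)$ as in \eqref{tullio invidioso}, and observe, as already noted, that $A = A^T$ is equivalent to $A_1 = A_1^T$, $A_2 = A_2^T$ and $A_4 = A_3^T$ (the fourth relation $A_3 = A_4^T$ is the same as the third, since $(A_4^T)^T = A_4$ under the canonical identification $W'' \cong W$). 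Hence a symmetric block operator is precisely one of the shape $\begin{pmatrix} A_1 & A_3 \\ A_3^T & A_2 \end{pmatrix}$ with $A_1 \in \HomS(V',V)$, $A_2 \in \HomS(W',W)$ and $A_3 \in \Hom(W',V)$ arbitrary.

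Next I would check that the displayed assignment is a linear isomorphism. Linearity is immediate from \eqref{tullio invidioso}, since the four blocks depend linearly on $A$ and $A_3 \mapsto A_3^T$ is linear. Injectivity holds because $A_1$, $A_2$ and $A_3^T$ (equivalently $A_3$) determine all four blocks of $A$, hence $A$ itself. For surjectivity, given a triple $(B_1, B_2, C)$ in $\HomS(V',V)\oplus\HomS(W',W)\oplus\Hom(V',W)$, I would set $A_3 := C^T$, which lies in $\Hom(W',V)$ because $C \in \Hom(V',W)$ (using $V'' \cong V$, $W'' \cong W$), and form $A := \begin{pmatrix} B_1 & C^T \\ C & B_2 \end{pmatrix}$; the block-transpose identity together with $B_1 = B_1^T$, $B_2 = B_2^T$ and $(C^T)^T = C$ shows $A = A^T$, while $A$ maps to $(B_1, B_2, (C^T)^T) = (B_1, B_2, C)$. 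Thus the inverse is $(B_1,B_2,C)\mapsto \begin{pmatrix} B_1 & C^T \\ C & B_2 \end{pmatrix}$.

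Part (2) goes through with the evident sign changes: from the same block-transpose identity, $A = -A^T$ is equivalent to $A_1 = -A_1^T$, $A_2 = -A_2^T$ and $A_4 = -A_3^T$, so an antisymmetric block operator has the form $\begin{pmatrix} A_1 & A_3 \\ -A_3^T & A_2 \end{pmatrix}$ with $A_1 \in \HomA(V',V)$, $A_2 \in \HomA(W',W)$ and $A_3 \in \Hom(W',V)$ free; the same linearity/injectivity argument applies, and for surjectivity one forms $\begin{pmatrix} B_1 & C^T \\ -C & B_2 \end{pmatrix}$ from a triple $(B_1,B_2,C)$ and checks antisymmetry via the block-transpose formula. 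I do not expect any genuine obstacle: the only point requiring care is the bookkeeping of domains and codomains --- in particular that transposition restricts to a linear bijection $\Hom(W',V) \to \Hom(V',W)$, its own inverse by $(A^T)^T = A$ --- and the observation that the off-diagonal block carries no symmetry constraint in either case, which is exactly why the third summand is the full space $\Hom(V',W)$.
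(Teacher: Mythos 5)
Your argument is correct and follows exactly the route the paper takes: it reads the symmetry (resp.\ antisymmetry) conditions off the block-transpose identity displayed just before the lemma, notes that the condition $A_3=A_4^T$ makes the off-diagonal block a free parameter, and concludes; you merely spell out the linearity, injectivity, and surjectivity checks that the paper leaves implicit. No gaps.
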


\subsection{Representation theory of finite groups}
We now recall some notions from the representation theory of finite groups. We refer to our monographs \cite{book,book2} for a complete exposition and detailed proofs. 

Let $G$ be a finite group. We always suppose that all $G$-representations $(\rho,W)$ are {\em unitary}: the representation space $W$ is finite dimensional hermitian and $\rho(g) \in \Hom(W,W)$ is unitary for every $g\in G$ (it is well known that every representation of a finite group over a complex vector space is unitarizable (cf. \cite[Proposition 3.3.1]{book})). We denote by $\widehat{G}$ a complete set of  pairwise inequivalent irreducible $G$-representations.

Given two $G$-representations $(\rho,W)$ and $(\sigma,V)$, we denote by $\Hom_G(W,V)$  (sometimes we shall also use the notation  $\Hom_G(\rho, \sigma)$) the space of all linear operators $A \colon W \to V$, called \emph{intertwiners} of $\rho$ and $\sigma$,  such that $A\rho(g) = \sigma(g)A$ for all $g \in G$.

Let $(\rho,W)$ be a $G$-representation. We denote by $\chi_\rho \colon G \to\C$ its character and, in the notation from Subsection \ref{ss:linear-algebra}, we denote by $(\rho',W')$ the {\em conjugate representation} defined by setting
\begin{equation}\label{defconjrep}
\rho'(g)=\rho(g^{-1})^T
\end{equation}
for all $g\in G$. We have $\chi_{\rho'}=\overline{\chi_\rho}$ (complex conjugation).
Suppose now that $(\rho,W)$ is irreducible. Then $\rho$ is said to be {\it complex} if $\chi_\rho \neq \chi_{\rho'}$, that is, $\rho$ and $\rho'$ are not (unitarily) equivalent; on the other hand, $\rho$ is called {\it self--conjugate} if $\chi_\rho = \chi_{\rho'}$, that is, $\rho$ and $\rho'$ are (unitarily) equivalent.
Clearly, $\rho$ is self--conjugate if and only if $\chi_\rho$ is real valued. 
The class of self--conjugate representations in turn may be splitted into two subclasses.  
Let $(\rho,W)$ be a self--conjugate $G$-representation and suppose that there exists an orthonormal basis
$\{w_1,w_2, \ldots, w_d\}$ in $W$ such that
the corresponding matrix coefficients are real valued: $u_{i,j}(g) =
\langle \rho(g)w_j, w_i\rangle \in \RR$ for all $g \in G$ and $i,j = 1,2,\ldots, d$.
Then $\rho$ is termed {\it real}. Otherwise, $\rho$ is said to be {\it quaternionic}.

\begin{lemma}\label{lemmaAAT}
Let $(\rho,W)$ be an irreducible, self--conjugate $G$-representation and let $A \in \Hom_G(W,W')$
be a unitary operator. Then, if $\rho$ is {\em real} one has $A\overline{A}=I_{W'}$ (equivalently, $A=A^T$), while if $\rho$ is {\em quaternionic} one has $A\overline{A}=-I_{W'}$ (equivalently, $A=-A^T$).
\end{lemma}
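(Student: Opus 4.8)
The plan is to combine Schur's lemma with the uniqueness of the symmetric/antisymmetric decomposition \eqref{decsymanti}. Since $\rho$ is irreducible and self-conjugate, $\Hom_G(W,W')$ is one-dimensional; as we are given a unitary $A$ in this space, every intertwiner is a scalar multiple of $A$. Now observe that $A^T$ also belongs to $\Hom_G(W,W')$: indeed, from $A\rho(g)=\rho'(g)A=\rho(g^{-1})^T A$ one obtains, by taking transposes and using $(BC)^T=C^TB^T$ together with the canonical identification $W''\cong W$, that $A^T\rho(g)=\rho(g^{-1})^TA^T=\rho'(g)A^T$ for all $g\in G$. Hence $A^T=cA$ for some scalar $c\in\CC$.

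Next I would pin down $c$. Applying the transpose twice gives $A=(A^T)^T=(cA)^T=cA^T=c^2A$, so $c^2=1$ and therefore $c=\pm1$; that is, $A$ is either symmetric or antisymmetric. To see that $|c|$ is forced (this is automatic here since $c=\pm 1$, but the unitarity is what rules out the degenerate possibility $A=0$ and will matter in identifying the two cases), note that $A$ unitary means $A^*A=I_W$, equivalently $A^T\overline A=I_{W'}$ by the remarks preceding the lemma, i.e. $A\overline A = \tfrac{1}{c}A^T\overline A=\tfrac1c I_{W'}=cI_{W'}$. Thus $c=1$ gives $A\overline A=I_{W'}$ and $A=A^T$, while $c=-1$ gives $A\overline A=-I_{W'}$ and $A=-A^T$.

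It remains to match the two cases with $\rho$ being real, respectively quaternionic. If $\rho$ is real, fix an orthonormal basis of $W$ in which all matrix coefficients $u_{i,j}(g)=\langle\rho(g)w_j,w_i\rangle$ are real; then the matrix of $\rho(g)$ is real orthogonal, so $\rho(g^{-1})^T=\rho(g)$, which means the map $\theta_W\colon W\to W'$ (or simply the identity matrix read in the dual basis) is itself an intertwiner in $\Hom_G(W,W')$, and it is symmetric and unitary. By the one-dimensionality of $\Hom_G(W,W')$, our $A$ is a scalar multiple of this symmetric unitary intertwiner, hence symmetric, forcing $c=1$. Conversely, if $c=1$ then $A$ is symmetric unitary, and a standard argument (diagonalizing, or invoking the classical Frobenius–Schur dichotomy) produces an orthonormal basis with real matrix coefficients, so $\rho$ is real; and if $c=-1$, the same exclusion shows $\rho$ cannot be real, hence is quaternionic.

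The main obstacle is the last paragraph: cleanly extracting a real orthonormal basis from the existence of a symmetric unitary intertwiner (and, dually, showing an antisymmetric one obstructs realness). One economical route is to avoid constructing the basis directly and instead argue at the level of the two standard self-conjugate types: one knows a self-conjugate irreducible $\rho$ is either real or quaternionic and, in each case, that the essentially unique unitary element of $\Hom_G(W,W')$ has a definite symmetry ($A=A^T$ in the real case, $A=-A^T$ in the quaternionic case) — for instance because $\rho$ real means $\rho$ is realizable over $\RR$, giving the symmetric $A$ above, while $\rho$ quaternionic means $\rho$ is realizable over $\HH$, forcing the skew form. Since the two symmetry types are mutually exclusive by \eqref{decsymanti}, the computation $c=\pm1$ above then determines the type of $\rho$ uniquely.
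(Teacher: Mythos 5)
Your algebraic core is correct: Schur's lemma gives $\dim\Hom_G(W,W')=1$, the transpose argument shows $A^T\in\Hom_G(W,W')$, hence $A^T=cA$ with $c^2=1$, and unitarity in the form $A^T\overline{A}=I_{W'}$ turns this into $A\overline{A}=cI_{W'}$. The implication ``$\rho$ real $\Rightarrow c=1$'' is also correctly established via the intertwiner $w_i\mapsto w_i'$ attached to a real orthonormal basis. (For the record, the paper does not prove this lemma in the text; it cites \cite[Lemma 9.7.6]{book}.)

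The genuine gap is the quaternionic case. In this paper ``quaternionic'' is \emph{defined} as ``self-conjugate and not real,'' so what you still owe is precisely the converse implication $c=1\Rightarrow\rho$ real (equivalently: $\rho$ not real $\Rightarrow c=-1$); without it you cannot conclude anything about $A$ when $\rho$ is quaternionic. Your fallback --- ``$\rho$ quaternionic means $\rho$ is realizable over the quaternions, forcing the skew form'' --- assumes the identification of ``not real'' with ``carries a quaternionic structure,'' which is the nontrivial half of the Frobenius--Schur dichotomy, i.e.\ essentially the lemma itself; as written the argument is circular. The gap can be closed with the tools of Section 2: put $J=\theta_W^{-1}A\colon W\to W$. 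Then $J$ is antilinear, commutes with every $\rho(g)$ (both $A$ and $\theta_W$ intertwine $\rho$ with $\rho'$), and satisfies $\langle Ju,Jv\rangle=\overline{\langle u,v\rangle}$ because $A$ is unitary. From $A^T\theta_{W'}=\theta_WA^*=\theta_WA^{-1}$ and $\theta_{W'}=\theta_W^{-1}$ one gets $J^2=I_W$ if and only if $A=A^T$. When $J^2=I_W$, the fixed set $W^J=\{w\in W: Jw=w\}$ is a $\rho$-invariant real form of $W$ on which $\langle\cdot,\cdot\rangle$ is real-valued, and any orthonormal basis of $W^J$ gives real matrix coefficients, so $\rho$ is real. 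This proves $c=1\Rightarrow\rho$ real, and the quaternionic case then follows by exclusion.
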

\begin{proof}
See \cite[Lemma 9.7.6]{book}
\end{proof}

Let  $n$ be a positive integer, and consider the diagonal subgroup $\widetilde{G}^n = \{g,g,\ldots,g): g \in G\}$ of $G^n = \underbrace{G \times G \times \ldots \times G}_{n \mbox{\ \tiny times}}$. Given $G$-representations $(\rho_i, V_i)$, $i=1,2,\ldots,n$, following our monograph,  we denote by $(\rho_1 \boxtimes \rho_2 \boxtimes \cdots \boxtimes \rho_n, V_1 \otimes V_2 \otimes \cdots \otimes V_n)$ their  \emph{external tensor product} which is a $G^n$-representation. Moreover we denote by $(\rho_1 \otimes \rho_2 \otimes
\cdots \otimes \rho_n, V_1 \otimes V_2 \otimes \cdots \otimes V_n)$ the {\it Kronecker product} of the $\rho_i$'s, that is the  $G$-representation defined by
$\rho_1 \otimes \rho_2 \otimes \cdots \otimes \rho_n = \Res^{G^n}_{\widetilde{G}^n} (\rho_1 \boxtimes \rho_2 \boxtimes
\cdots \boxtimes \rho_n)$.

\section{The $\tau$-Frobenius-Schur number}
\label{sec:tau}
In what follows,  $\tau \colon G \to G$ is  an {\em involutory anti-automorphism} of $G$, that is a bijection such that
\[
\tau(g_1g_2)=\tau(g_2)\tau(g_1) \qquad \text{ and }\qquad \tau^2(g)=g
\]
  for all $g_1,g_2,g \in G$. 
In particular, $\tau(1_G)=1_G$ and $\tau(g^{-1})=\tau(g)^{-1}$,  for all $g \in G$.
Let $(\rho,W)$ be a $G$-representation.
Then the associated \emph{$\tau$-conjugate representation} is the $G$-representation $(\rho^\tau,W')$ defined by setting
\begin{equation}\label{rhotau}
\rho^\tau(g)=\rho[\tau(g)]^T,
\end{equation}
that is 
\begin{equation}\label{tipico}
\left[\rho^\tau(g)w'\right](w)=w'[\rho(\tau(g))w],
\end{equation}  for all $g\in G$, $w'\in W'$ and $w\in W$. 
Note that if $\tau_{\text{\rm inv}} \colon G \to G$ is the involutory anti-automorphism of $G$ defined by $\tau_{\text{\rm inv}}(g) = g^{-1}$ for all $g \in G$, then $\rho^{\tau_{\text{\rm inv}}} = \rho'$ (cf. \eqref{defconjrep}).

\begin{remark} {\rm
Let $g_0 \in G$ and denote by $\tau_{g_0}$ the \emph{inner} involutory anti-automorphism of $G$ given by composing conjugation by $g_0$ and $\tau_{\text{\rm inv}}$, that is, $\tau_{g_0}(g) = g_0g^{-1}g_0^{-1}$ for all $g \in G$. 
Then, given a $G$-representation $(\rho,W)$ we have, for all $g\in G$, $w'\in W'$ and $w\in W$,
\[
\begin{split}
\left[\rho^{\tau_{g_0}}(g)w'\right](w) & = w'[\rho(\tau_{g_0}(g))w] = w'[\rho(g_0g^{-1}g_0^{-1})w] \\
& = w'[\rho(g_0)\rho(g^{-1})\rho(g_0)^{-1}w] \\
& = \left[(\rho(g_0)\rho(g^{-1})\rho(g_0)^{-1})^Tw'\right](w) \\
& = \left[\rho(g_0^{-1})^T\rho^{\tau_{\text{\rm inv}}}(g)\rho(g_0)^Tw'\right](w)
\end{split}
\]
yielding $\rho^{\tau_{g_0}}(g) =(\rho(g_0)^T)^{-1}\rho^{\tau_{\text{\rm inv}}}(g)\rho(g_0)^T$, so that
\[
\rho^{\tau_{g_0}} \sim \rho^{\tau_{\text{\rm inv}}}.
\]
}
\end{remark}

\begin{proposition}
\label{p:3.1}
\begin{enumerate}[{\rm (1)}]
\item
The $G$-representation $(\rho^\tau,W')$ is irreducible if and only if $(\rho,W)$ is irreducible.
\item
If $A\in\Hom_G(\rho^\tau,\rho)$ then also $A^T\in\Hom_G(\rho^\tau,\rho)$ and we have the direct sum decomposition
\[
\Hom_G(\rho^\tau,\rho)=\HomS_G(\rho^\tau,\rho)\oplus\HomA_G(\rho^\tau,\rho),
\]
where $\HomS_G = \HomS \cap \Hom_G$ and $\HomA_G = \HomA \cap \Hom_G$ (compare with \eqref{decsymanti}).
\end{enumerate}
\end{proposition}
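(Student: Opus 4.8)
The plan is to treat the two assertions separately, in each case by elementary manipulation of the defining relation $\rho^\tau(g)=\rho(\tau(g))^T$ combined with the substitution $g\mapsto\tau(g)$, which is legitimate because $\tau$ is a bijection (indeed an involution).

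For part (1), I would first note that $\rho^\tau$ really is a $G$-representation: since $(BC)^T=C^TB^T$ and $\tau$ reverses products,
\[
\rho^\tau(g_1g_2)=\rho\big(\tau(g_2)\tau(g_1)\big)^T=\rho(\tau(g_1))^T\rho(\tau(g_2))^T=\rho^\tau(g_1)\rho^\tau(g_2).
\]
The quickest route to irreducibility is then via characters: transposition does not change the trace (the matrix of $B^T$ is the transpose of that of $B$), so $\chi_{\rho^\tau}(g)=\chi_\rho(\tau(g))$, and since $\tau$ permutes $G$,
\[
\langle\chi_{\rho^\tau},\chi_{\rho^\tau}\rangle=\frac{1}{|G|}\sum_{g\in G}|\chi_\rho(\tau(g))|^2=\frac{1}{|G|}\sum_{g\in G}|\chi_\rho(g)|^2=\langle\chi_\rho,\chi_\rho\rangle .
\]
Hence $\rho$ is irreducible if and only if $\langle\chi_\rho,\chi_\rho\rangle=1$, if and only if $\langle\chi_{\rho^\tau},\chi_{\rho^\tau}\rangle=1$, if and only if $\rho^\tau$ is irreducible. (Equivalently, and without invoking the orthogonality relations, one checks that the annihilator map $U\mapsto U^{\circ}$ is an inclusion-reversing bijection from the $\rho$-invariant subspaces of $W$ onto the $\rho^\tau$-invariant subspaces of $W'$, using that $(\rho^\tau)^\tau=\rho$ under the canonical identification $W''=W$.)

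For part (2), the key point is that transposing the intertwining identity and then substituting $g\mapsto\tau(g)$ reproduces an intertwining identity of the same shape. Indeed, $A\in\Hom_G(\rho^\tau,\rho)$ says $A\,\rho(\tau(g))^T=\rho(g)\,A$ for all $g$; transposing (and using $(B^T)^T=B$ modulo $W\cong W''$) gives $\rho(\tau(g))\,A^T=A^T\rho(g)^T$; replacing $g$ by $\tau(g)$ and using $\tau^2=\id$ yields
\[
\rho(g)\,A^T=A^T\rho(\tau(g))^T=A^T\rho^\tau(g),
\]
i.e. $A^T\in\Hom_G(\rho^\tau,\rho)$. Granted this, the decomposition is formal: from \eqref{decsymanti} write $A=\tfrac12(A+A^T)+\tfrac12(A-A^T)$; both summands again lie in $\Hom_G(\rho^\tau,\rho)$ (a linear subspace now shown to be closed under transpose), the first is symmetric and the second antisymmetric, and $\HomS\cap\HomA=\{0\}$ since $A^T=A=-A^T$ forces $2A=0$. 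Therefore $\Hom_G(\rho^\tau,\rho)=\HomS_G(\rho^\tau,\rho)\oplus\HomA_G(\rho^\tau,\rho)$.

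I do not expect a genuine obstacle; the only points requiring care are bookkeeping ones: keeping track of the canonical identification $W\cong W''$ when iterating the transpose, and remembering to perform the substitution $g\mapsto\tau(g)$ (which in the classical case $\tau=\tau_{\text{\rm inv}}$ becomes $g\mapsto g^{-1}$), so that the statement holds for an arbitrary involutory anti-automorphism and not merely for inversion.
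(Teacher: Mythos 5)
Your proof is correct. Part (2) is essentially identical to the paper's argument: transpose the intertwining relation $A\rho^\tau(g)=\rho(g)A$, rewrite using $\rho^\tau(g)^T=\rho(\tau(g))$, and substitute $g\mapsto\tau(g)$; the splitting into symmetric and antisymmetric parts is then the formal consequence of \eqref{decsymanti} once one knows the space is closed under transposition, exactly as in the paper. For part (1) your primary argument differs: you compare $\langle\chi_{\rho^\tau},\chi_{\rho^\tau}\rangle$ with $\langle\chi_\rho,\chi_\rho\rangle$ via $\chi_{\rho^\tau}=\chi_\rho\circ\tau$ and the fact that $\tau$ permutes $G$, whereas the paper argues directly with invariant subspaces, sending a nontrivial $\rho$-invariant subspace $U\leq W$ to its annihilator in $W'$, which is nontrivial and $\rho^\tau$-invariant, and then invoking $(\rho^\tau)^\tau=\rho$ for the converse --- precisely the alternative you sketch in parentheses. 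The character route is perfectly valid for finite groups but leans on the orthogonality relations; the annihilator route is more elementary, uses neither unitarity nor the specific base field, and exhibits the invariant subspaces explicitly, which is more in the spirit of the rest of the paper. Either way, both of your arguments go through.
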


\begin{proof}
(1) Suppose first that $\rho$ is reducible and let $U\leq W$ be a nontrivial $\rho$-invariant subspace. 
Then $Z=\{w'\in W': w'(u)=0 \text{ for all } u\in U\}\leq W'$ is nontrivial and $\rho^\tau$-invariant, thus showing
that $\rho^\tau$ is also reducible. Since $(\rho^\tau)^\tau = \rho$, by applying the previous argument we also
deduce the converse.

(2) Let $A\in\Hom_G(\rho^\tau,\rho)$ and $g \in G$. Then, by transposing  the identity $A\rho^\tau(g)=\rho(g)A$ we get $\rho^\tau(g)^TA^T=A^T\rho(g)^T$ which, by \eqref{rhotau}, becomes $\rho[\tau(g)]A^T=A^T\rho^\tau[\tau(g)]$.
Since $\tau$ is bijective, by replacing $\tau(g)$ with $g$, we finally obtain $\rho(g)A^T=A^T\rho^\tau(g)$, thus showing that $A^T\in\Hom_G(\rho^\tau,\rho)$. The direct sum decomposition is obvious.
\end{proof}

\begin{lemma}
\label{l:2}
Let $(\rho,W)$ and $(\sigma,V)$ be two $G$-representations. Then the following isomorphisms hold:
\begin{equation}\label{sigmataurho}
\Hom_G(\sigma^\tau,\rho)\cong\Hom_G(\rho^\tau,\sigma),
\end{equation}
\begin{equation}\label{sigmataurhoS}
\HomS_G\left[(\sigma\oplus\rho)^\tau,\sigma\oplus\rho\right]\cong\HomS_G(\sigma^\tau,\sigma)\oplus\HomS_G(\rho^\tau,\rho)\oplus\Hom_G(\sigma^\tau,\rho)
\end{equation}
and
\begin{equation}\label{sigmataurhoA}
\HomA_G\left[(\sigma\oplus\rho)^\tau,\sigma\oplus\rho\right]\cong\HomA_G(\sigma^\tau,\sigma)\oplus\HomA_G(\rho^\tau,\rho)\oplus\Hom_G(\sigma^\tau,\rho).
\end{equation}
\end{lemma}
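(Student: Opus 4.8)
The plan is to establish \eqref{sigmataurho} by hand via the transpose map, and then to deduce \eqref{sigmataurhoS} and \eqref{sigmataurhoA} by grafting the $G$-equivariance onto the purely linear-algebraic block decomposition of Lemma \ref{lemmadirectsum}.

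For \eqref{sigmataurho}, I would start from $A\in\Hom_G(\sigma^\tau,\rho)$ and transpose the intertwining identity $A\sigma^\tau(g)=\rho(g)A$, exactly as in the proof of Proposition \ref{p:3.1}(2): using \eqref{rhotau} this becomes $\sigma[\tau(g)]A^T=A^T\rho^\tau[\tau(g)]$, and since $\tau$ is a bijection (with $\tau^2=\id$) one may replace $\tau(g)$ by $g$ to obtain $\sigma(g)A^T=A^T\rho^\tau(g)$, i.e. $A^T\in\Hom_G(\rho^\tau,\sigma)$ (here $A^T$ is viewed in $\Hom(W',V)$ after the canonical identification $V''=V$). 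Since $(A^T)^T=A$ under this identification, the assignment $A\mapsto A^T$ is a linear involution between $\Hom_G(\sigma^\tau,\rho)$ and $\Hom_G(\rho^\tau,\sigma)$, hence the asserted isomorphism.

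For \eqref{sigmataurhoS} and \eqref{sigmataurhoA}, first I would record that $(\sigma\oplus\rho)^\tau=\sigma^\tau\oplus\rho^\tau$ on $(V\oplus W)'=V'\oplus W'$, which is immediate from \eqref{rhotau} since transposition respects the block-diagonal decomposition. Next, writing a generic $A\in\Hom[(V\oplus W)',V\oplus W]$ in the block form \eqref{matrixnotation}, the operator $A$ intertwines $\sigma^\tau\oplus\rho^\tau$ with $\sigma\oplus\rho$ if and only if $A_1\in\Hom_G(\sigma^\tau,\sigma)$, $A_2\in\Hom_G(\rho^\tau,\rho)$, $A_3\in\Hom_G(\rho^\tau,\sigma)$ and $A_4\in\Hom_G(\sigma^\tau,\rho)$ (the standard block description of intertwiners of a direct sum); in other words the four-block splitting underlying Lemma \ref{lemmadirectsum} restricts to the $G$-equivariant subspaces on both sides. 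Intersecting with $\HomS$ (resp. $\HomA$) and invoking Lemma \ref{lemmadirectsum}(1) (resp. (2)), the relevant operators are those with $A_1=\pm A_1^T$, $A_2=\pm A_2^T$ and $A_4=\pm A_3^T$; by Proposition \ref{p:3.1}(2) the first two conditions mean $A_1\in\HomS_G(\sigma^\tau,\sigma)$ and $A_2\in\HomS_G(\rho^\tau,\rho)$ (resp. the antisymmetric analogues), while the third leaves $A_4\in\Hom_G(\sigma^\tau,\rho)$ as free datum (the block $A_3=\pm A_4^T$ being determined, consistently with \eqref{sigmataurho}). Reading off the triple $(A_1,A_2,A_4)$ as in Lemma \ref{lemmadirectsum} then yields the two stated isomorphisms.

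I do not expect a genuine obstacle here: the content is that transpose swaps $\tau$-conjugate intertwiners and that Lemma \ref{lemmadirectsum} is natural enough to descend to $G$-invariants. The only points demanding care are the bookkeeping of the canonical bidual identifications (so that $A^T$ lands in the claimed Hom-space and $(A^T)^T=A$), checking that the block decomposition indeed restricts to the $G$-equivariant subspaces (it does, since taking a block of an operator on $V'\oplus W'$ is itself a $G$-equivariant operation), and matching the third summand with $\Hom_G(\sigma^\tau,\rho)$ rather than with $\Hom_G(\rho^\tau,\sigma)$ — which is exactly the choice of $A_4$ over $A_3$ as the surviving coordinate in Lemma \ref{lemmadirectsum}.
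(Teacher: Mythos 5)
Your proposal is correct and follows essentially the same route as the paper: the paper also realizes \eqref{sigmataurho} by the transpose map $A\mapsto A^T$ and obtains \eqref{sigmataurhoS} and \eqref{sigmataurhoA} from the block maps of Lemma \ref{lemmadirectsum}, noting that in the matrix notation \eqref{matrixnotation} $A$ is an intertwiner if and only if all four blocks $A_1,A_2,A_3,A_4$ are. Your write-up merely makes explicit the bookkeeping (bidual identifications, which block intertwines which pair, and why $A_4\in\Hom_G(\sigma^\tau,\rho)$ is the surviving coordinate) that the paper leaves implicit.
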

\begin{proof}
The isomorphism \eqref{sigmataurho} is realized by the map $A\mapsto A^T$. The isomorhism \eqref{sigmataurhoS} (resp. \eqref{sigmataurhoA}) is realized by the map in Lemma \eqref{lemmadirectsum}.(1) (resp. Lemma \eqref{lemmadirectsum}.(2)), keeping into account that in the matrix notation \eqref{matrixnotation} $A$ is an intertwining operator if and only if $A_1, A_2, A_3, A_4$ are intertwining operators.
\end{proof}

\begin{definition}
\label{rhotau2}
{\rm
The $\tau$-{\em Frobenius}-{\em Schur number} of a $G$ representation $(\rho,W)$ is the integer number $C_\tau(\rho)$
defined by
\[
C_\tau(\rho)=\dim\HomS_G(\rho^\tau,\rho)-\dim\HomA_G(\rho^\tau,\rho).
\]}
\end{definition}

We also set $C(\rho)=\dim\HomS_G(\rho',\rho)-\dim\HomA_G(\rho',\rho)$, that is $C(\rho) = C_{\tau_{\text{\rm inv}}}(\rho)$. 
We start by examining $C_\tau(\rho)$ and $C(\rho)$ when $\rho$ is irreducible.

\begin{theorem}
\label{thmCtC}
Suppose that $\rho$ is irreducible. Then
\begin{enumerate}[{\rm (1)}]
\item
$C_\tau(\rho)\in\{-1,0,1\}$. Moreover, $C_\tau(\rho) = 0$ (resp. $C_\tau(\rho) = \pm 1$) if and only if
$\rho^\tau \not\sim \rho$ (resp. $\rho^\tau \sim \rho$).

\item In particular,
\label{Crho}
 \[
C(\rho)=\begin{cases}
1&\text{ if }\rho\text{ is real}\\
0&\text{ if }\rho\text{ is complex}\\
-1&\text{ if }\rho\text{ is quaternionic.}
\end{cases}
\]
\end{enumerate}
\end{theorem}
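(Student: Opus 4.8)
The plan is to reduce the computation of $C_\tau(\rho)$ to a dimension count of $G$-invariants inside $W' \otimes W'$, split according to the symmetric/antisymmetric decomposition, and then invoke Schur's lemma. First I would observe that by Proposition \ref{p:3.1}, $\Hom_G(\rho^\tau,\rho)$ decomposes as $\HomS_G(\rho^\tau,\rho) \oplus \HomA_G(\rho^\tau,\rho)$, so $C_\tau(\rho)$ is the difference of the dimensions of these two summands. Since $\rho$ is irreducible, so is $\rho^\tau$ (Proposition \ref{p:3.1}(1)); hence by Schur's lemma $\dim \Hom_G(\rho^\tau,\rho) \in \{0,1\}$, and it equals $1$ precisely when $\rho^\tau \sim \rho$. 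This immediately gives the dichotomy: if $\rho^\tau \not\sim \rho$, then both $\HomS_G$ and $\HomA_G$ are zero, so $C_\tau(\rho) = 0$; if $\rho^\tau \sim \rho$, the one-dimensional space $\Hom_G(\rho^\tau,\rho)$ lies entirely in one of the two summands, so $C_\tau(\rho) = \pm 1$ accordingly. This proves part (1) modulo knowing which sign occurs, which is not asked for in general.

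For part (2), I specialize to $\tau = \tau_{\text{\rm inv}}$, so $\rho^\tau = \rho'$ and $C(\rho) = C_{\tau_{\text{\rm inv}}}(\rho)$. When $\rho$ is complex, $\rho' \not\sim \rho$ and part (1) gives $C(\rho)=0$. When $\rho$ is self-conjugate, $\Hom_G(\rho',\rho)$ is one-dimensional and spanned by some operator $A$; after rescaling I may take $A$ unitary (an intertwiner between irreducibles is a scalar multiple of a unitary one, by Schur). Now I apply Lemma \ref{lemmaAAT}: if $\rho$ is real then $A = A^T$, so $A$ lies in $\HomS_G(\rho',\rho)$ and therefore $C(\rho) = +1$; if $\rho$ is quaternionic then $A = -A^T$, so $A \in \HomA_G(\rho',\rho)$ and $C(\rho) = -1$. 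This exhausts all three cases and completes the proof.

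The only genuine subtlety is making sure the transpose operation used to define $\HomS$ and $\HomA$ is the correct one, i.e. that for $A \in \Hom_G(\rho^\tau,\rho) \subseteq \Hom(W',W)$ the transpose $A^T$ again lands in $\Hom(W',W)$ (via the canonical identification $W \cong W''$) and again intertwines $\rho^\tau$ and $\rho$ — but this is exactly the content of Proposition \ref{p:3.1}(2), so nothing new is needed. The other point to keep straight is that Lemma \ref{lemmaAAT} is phrased in terms of $A\overline{A} = \pm I_{W'}$, and I should note in passing that $A\overline{A} = I_{W'}$ is equivalent to $A = A^T$ for a unitary $A$ (since $\overline{A} = A^T$ when $A$ is unitary, by the identities recorded in Subsection \ref{ss:linear-algebra}); the lemma statement already records this equivalence, so it can simply be cited. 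I do not expect any real obstacle here; the argument is essentially Schur's lemma plus the bookkeeping of the symmetric/antisymmetric splitting, with Lemma \ref{lemmaAAT} supplying the classical Frobenius–Schur sign in the untwisted case.
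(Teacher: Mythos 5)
Your proposal is correct and follows essentially the same route as the paper: part (1) rests on Proposition \ref{p:3.1} together with Schur's lemma (the paper phrases the final step as $A^T=\lambda A$ with $\lambda^2=1$, whereas you count dimensions in the splitting $\Hom_G=\HomS_G\oplus\HomA_G$, which is an equivalent bookkeeping), and part (2) invokes Lemma \ref{lemmaAAT} exactly as the paper does. No gaps.
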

\begin{proof}
(1) If $\rho^\tau\not\sim\rho$ then $\dim\Hom_G(\rho^\tau,\rho)=0$ and therefore $C_\tau(\rho)=0$. Now suppose that $\rho\sim\rho^\tau$. If $A\in \Hom_G(\rho^\tau,\rho)$, $A\neq 0$, then also $A^T\in \Hom_G(\rho^\tau,\rho)$ and therefore, by Proposition \ref{p:3.1} and Schur's lemma, there exists $\lambda\in\CC$ such that $A^T=\lambda A$. By transposing we get $A=\lambda A^T=\lambda^2A$, which implies that $\lambda=\pm1$. If $\lambda=1$ then $A$ is symmetric and $C_\tau(\rho)=\dim\HomS_G(\rho^\tau,\rho)-\dim\HomA_G(\rho^\tau,\rho)=1-0=1$; similarly, if $\lambda =-1$ then $C_\tau(\rho)=-1$.

(2) If $\rho$ is complex then $\rho'\not\sim\rho$ and therefore $\dim\Hom_G(\rho',\rho)=0$ and $C(\rho)=0$. If $\rho$ is self--adjoint then $\dim\Hom_G(\rho',\rho)=1$ and this space is spanned by a unitary matrix $A$ as in Lemma \ref{lemmaAAT}, which is symmetric if $\rho$ is real and antisymmetric if $\rho$ is quaternionic.
\end{proof}

We now examine the behaviour of $C_\tau$ with respect to direct sums and tensor products.

\begin{proposition}
\label{PropCtausum}
Let $(\rho,W)$ and $(\sigma,V)$ be two $G$-representations. Then
\[
C_\tau(\sigma\oplus\rho)=C_\tau(\sigma)+C_\tau(\rho).
\]
\end{proposition}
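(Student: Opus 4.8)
The plan is to reduce the additivity statement to the direct sum decompositions already established in Lemma \ref{l:2}. The key point is that the $\tau$-Frobenius-Schur number is defined purely in terms of dimensions of spaces of symmetric and antisymmetric intertwiners, and Lemma \ref{l:2} precisely describes how these spaces decompose when the representation is a direct sum.

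First I would apply the isomorphism \eqref{sigmataurhoS} to get
\[
\dim\HomS_G\left[(\sigma\oplus\rho)^\tau,\sigma\oplus\rho\right]=\dim\HomS_G(\sigma^\tau,\sigma)+\dim\HomS_G(\rho^\tau,\rho)+\dim\Hom_G(\sigma^\tau,\rho),
\]
and similarly, using \eqref{sigmataurhoA},
\[
\dim\HomA_G\left[(\sigma\oplus\rho)^\tau,\sigma\oplus\rho\right]=\dim\HomA_G(\sigma^\tau,\sigma)+\dim\HomA_G(\rho^\tau,\rho)+\dim\Hom_G(\sigma^\tau,\rho).
\]
Then I would subtract the second identity from the first. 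The crucial observation is that the ``cross term'' $\dim\Hom_G(\sigma^\tau,\rho)$ appears with a plus sign in both expansions, so it cancels upon subtraction. What remains is exactly
\[
C_\tau(\sigma\oplus\rho)=\bigl[\dim\HomS_G(\sigma^\tau,\sigma)-\dim\HomA_G(\sigma^\tau,\sigma)\bigr]+\bigl[\dim\HomS_G(\rho^\tau,\rho)-\dim\HomA_G(\rho^\tau,\rho)\bigr]=C_\tau(\sigma)+C_\tau(\rho),
\]
which is the desired equality.

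There is essentially no obstacle here: the entire content has been front-loaded into Lemma \ref{l:2}, whose proof in turn rests on the linear-algebra Lemma \ref{lemmadirectsum}. The only thing one must be slightly careful about is that the cross terms in \eqref{sigmataurhoS} and \eqref{sigmataurhoA} are genuinely the \emph{same} space $\Hom_G(\sigma^\tau,\rho)$ (not a symmetric or antisymmetric variant), which is indeed what Lemma \ref{l:2} asserts — this is what makes the cancellation work. One could also remark in passing that an entirely parallel cancellation would fail if one tried to compute, say, $\dim\HomS_G$ of a direct sum by itself, since there the cross term does not vanish; it is precisely the \emph{difference} that is additive. I would keep the write-up to the three displayed lines above plus a sentence noting the cancellation of the cross term.
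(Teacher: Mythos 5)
Your proof is correct and follows exactly the paper's route: the paper also deduces the proposition immediately from the isomorphisms \eqref{sigmataurhoS} and \eqref{sigmataurhoA} of Lemma \ref{l:2}, with the cancellation of the common cross term $\dim\Hom_G(\sigma^\tau,\rho)$ being the whole point. Your write-up simply makes explicit what the paper leaves as ``an immediate consequence.''
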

\begin{proof}
It is an immediate consequence of \eqref{sigmataurhoS} and \eqref{sigmataurhoA}.
\end{proof}

\begin{proposition}
\label{p:5}
Suppose that $G=G_1\times G_2$ and that $\tau$ satisfies
$\tau(G_1\times\{1_{G_2}\})=G_1\times\{1_{G_2}\}$ and $\tau(\{1_{G_1}\}\times G_2)=\{1_{G_1}\}\times G_2$.  Let $(\rho_i,W_i)$ be a $G_i$-representation for $i=1,2$. Then 
\begin{equation}\label{Ctauprod}
C_\tau(\rho_1\boxtimes\rho_2)=C_\tau(\rho_1)C_\tau(\rho_2).
\end{equation}
\end{proposition}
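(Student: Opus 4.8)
The plan is to reduce the statement to the two single-factor cases already understood, via the Künneth-type decomposition for intertwiner spaces. First I would observe that the hypothesis on $\tau$ means $\tau$ restricts to involutory anti-automorphisms $\tau_1$ of $G_1$ and $\tau_2$ of $G_2$, and that for $(g_1,g_2)\in G$ one has $\tau(g_1,g_2)=(\tau_1(g_1),\tau_2(g_2))$; consequently $(\rho_1\boxtimes\rho_2)^\tau = \rho_1^{\tau_1}\boxtimes\rho_2^{\tau_2}$ as $G$-representations on $W_1'\otimes W_2'\cong (W_1\otimes W_2)'$, where the identification intertwines the transpose on $\Hom(W_1\otimes W_2,\cdot)$ with the tensor of transposes.

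The key step is then the canonical isomorphism
\[
\Hom_G\big((\rho_1\boxtimes\rho_2)^\tau,\rho_1\boxtimes\rho_2\big)\cong \Hom_{G_1}(\rho_1^{\tau_1},\rho_1)\otimes\Hom_{G_2}(\rho_2^{\tau_2},\rho_2),
\]
which holds because $\Hom_{G_1\times G_2}(\sigma_1\boxtimes\sigma_2,\rho_1\boxtimes\rho_2)\cong\Hom_{G_1}(\sigma_1,\rho_1)\otimes\Hom_{G_2}(\sigma_2,\rho_2)$ for external tensor products (the intertwiners for $G_1$ and for $G_2$ act on disjoint tensor slots). Under this isomorphism the transpose operator $A\mapsto A^T$ on the left corresponds to $A_1\otimes A_2\mapsto A_1^T\otimes A_2^T$ on the right. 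Therefore the $+1$-eigenspace (symmetric part) of the transpose on the product decomposes as $(\HomS_1\otimes\HomS_2)\oplus(\HomA_1\otimes\HomA_2)$ and the $-1$-eigenspace as $(\HomS_1\otimes\HomA_2)\oplus(\HomA_1\otimes\HomS_2)$, where I abbreviate $\HomS_i=\HomS_{G_i}(\rho_i^{\tau_i},\rho_i)$ and similarly for $\HomA_i$. Taking dimensions,
\[
C_\tau(\rho_1\boxtimes\rho_2)=\big(\dim\HomS_1\dim\HomS_2+\dim\HomA_1\dim\HomA_2\big)-\big(\dim\HomS_1\dim\HomA_2+\dim\HomA_1\dim\HomS_2\big),
\]
and the right-hand side factors as $(\dim\HomS_1-\dim\HomA_1)(\dim\HomS_2-\dim\HomA_2)=C_\tau(\rho_1)C_\tau(\rho_2)$, which is exactly \eqref{Ctauprod}.

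I expect the main obstacle to be the careful bookkeeping in the first two steps rather than anything conceptually deep: namely verifying that the canonical identification $(W_1\otimes W_2)'\cong W_1'\otimes W_2'$ is compatible both with the $\tau$-conjugate $G$-action and with the transpose involution, so that the eigenspace splitting of the transpose genuinely matches the tensor splitting of $\HomS$ and $\HomA$ on each factor. Once the sign $A^T\leftrightarrow A_1^T\otimes A_2^T$ is pinned down, the dimension count and the factorization are purely formal. An alternative route, avoiding the external Künneth statement, would be to use the character formula from Theorem \ref{t1TFS} (once available) together with $\chi_{\rho_1\boxtimes\rho_2}=\chi_{\rho_1}\cdot\chi_{\rho_2}$ and $\tau(g_1,g_2)^{-1}(g_1,g_2)=(\tau_1(g_1)^{-1}g_1,\tau_2(g_2)^{-1}g_2)$, which makes the multiplicativity immediate; but since that theorem appears later in the paper, the eigenspace-decomposition argument above is the self-contained one to give here.
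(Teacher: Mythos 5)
Your argument is correct, and it organizes the reduction differently from the paper. The paper first treats the case where $\rho_1$ and $\rho_2$ are both irreducible: there Schur's lemma makes each space $\Hom_{G_i}(\rho_i^\tau,\rho_i)$ at most one-dimensional, spanned by some $A_i$, and the identity $(A_1\otimes A_2)^T=A_1^T\otimes A_2^T$ immediately gives the sign rule; the general case is then obtained by decomposing $\rho_1$ and $\rho_2$ into irreducibles and invoking the additivity $C_\tau(\sigma\oplus\rho)=C_\tau(\sigma)+C_\tau(\rho)$ of Proposition \ref{PropCtausum}. You instead prove the statement in one stroke for arbitrary $\rho_1,\rho_2$ via the K\"unneth-type isomorphism $\Hom_G\bigl((\rho_1\boxtimes\rho_2)^\tau,\rho_1\boxtimes\rho_2\bigr)\cong\Hom_{G_1}(\rho_1^{\tau_1},\rho_1)\otimes\Hom_{G_2}(\rho_2^{\tau_2},\rho_2)$, under which the transpose involution becomes $A_1\otimes A_2\mapsto A_1^T\otimes A_2^T$, and then read off $C_\tau$ as the supertrace-style dimension count $(s_1-a_1)(s_2-a_2)$. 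The two proofs share the same kernel (the compatibility of transposition with external tensor products and the parity bookkeeping of signs), but yours trades the irreducible decomposition and the additivity lemma for the external-product Hom-space factorization, which holds by averaging over $G_1\times G_2$ and so is unproblematic. The points you flag as needing care --- that $\tau=(\tau_1,\tau_2)$ on the factors, that $(W_1\otimes W_2)'\cong W_1'\otimes W_2'$ intertwines the $\tau$-conjugate actions, and that the eigenspace splitting of $T_1\otimes T_2$ is $(U_1^+\otimes U_2^+)\oplus(U_1^-\otimes U_2^-)$ versus $(U_1^+\otimes U_2^-)\oplus(U_1^-\otimes U_2^+)$ --- all check out, so no gap remains.
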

\begin{proof}
We first prove \eqref{Ctauprod} under the assumption that both $\rho_1$ and $\rho_2$ are irreducible. The representation $\left(\rho_1\boxtimes\rho_2\right)^\tau\sim\rho_1^\tau\boxtimes\rho_2^\tau$ is equivalent to $\rho_1\boxtimes\rho_2$ if and only if $\rho_1\sim\rho_1^\tau$ and $\rho_2\sim\rho_2^\tau$. Therefore $C_\tau(\rho_1\boxtimes\rho_2)=0$ if and only if $C_\tau(\rho_1)=0$ or $C_\tau(\rho_2)=0$. On the other hand, if $\rho_1\sim\rho_1^\tau$, $\rho_2\sim\rho_2^\tau$ and $A_i$ spans $\Hom_{G_i}\left(\rho_i^\tau,\rho_i\right)$, for $i=1,2$, then $\Hom_G\left[(\rho_1\boxtimes\rho_2)^\tau,\rho_1\boxtimes\rho_2\right]$ is spanned by $A_1\otimes A_2$. It is easy to check that $(A_1\otimes A_2)^T=A_1^T\otimes A_2^T$ so that $A_1\otimes A_2$ is symmetric if and only if $A_1$ and $A_2$ are both symmetric or antisimmetric, while $A_1\otimes A_2$ is antisymmetric if and only if one of the operators $A_1$ and $A_2$ is symmetric and the other is antisymmetric. In both cases, \eqref{Ctauprod} follows.

Now we remove the irreducibility assumption and we suppose that 
\[
\rho_1=\bigoplus_{i=1}^nm_i\sigma_i\qquad\text{ and }\qquad \rho_2=\bigoplus_{j=1}^kh_j\theta_j
\]
are the decompositions of $\rho_1$ and $\rho_2$ into irreducible representations. Then
\[
\begin{split}
C_\tau(\rho_1\boxtimes\rho_2)=& C_\tau\left[\bigoplus_{i=1}^n\bigoplus_{j=1}^km_ih_j(\sigma_i\boxtimes\theta_j)\right]\\
(\text{by Proposition }\ref{PropCtausum})\ =& \sum_{i=1}^n\sum_{j=1}^km_ih_jC_\tau(\sigma_i\boxtimes\theta_j)\\
(\text{by the first part of the proof})\ =& \sum_{i=1}^n\sum_{j=1}^km_ih_jC_\tau(\sigma_i)C_\tau(\theta_j)\\
=& \left[\sum_{i=1}^nm_iC_\tau(\sigma_i)\right]\left[\sum_{j=1}^kh_jC_\tau(\theta_j)\right]\\
(\text{again by Proposition }\ref{PropCtausum})\ \ =& C_\tau(\rho_1)C_\tau(\rho_2).
\end{split}
\]
\end{proof}

Note that when $\tau = \tau_{\text{\rm inv}}$ the first part of the proof of the preceding proposition may be also deduced from Theorem \ref{thmCtC}.\eqref{Crho}.

Let now $\omega \colon G \to G$ be  another involutory anti-automorphism of $G$ commuting with $\tau$: $\omega \tau = \tau \omega$.
Clearly, the composition $\omega\tau$ is now an (involutory) automorphism of $G$. Moreover we have
\[
(\rho^\tau)^\omega(g) = \left(\rho^\tau(\omega(g)\right)^T = \rho(\tau(\omega(g))) = \rho(\omega(\tau(g))) = (\rho^\omega)^\tau(g)
\]
for all $g \in G$, that is, 
\begin{equation}
\label{e:tau-omega}
(\rho^\tau)^\omega = (\rho^\omega)^\tau.
\end{equation}

\begin{lemma}
\label{l:7}
Let $\omega, \tau$ and $(\rho,W)$ (not necessarily irreducible) be as above. Then
\[
C_\tau(\rho^\omega) = C_\tau(\rho).
\]
\end{lemma}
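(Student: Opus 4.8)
The plan is to reduce the statement to the case where $\rho$ is irreducible and then to read off the value of $C_\tau(\rho^\omega)$ from the trichotomy in Theorem \ref{thmCtC}.(1). Two observations do the work. First, $\omega$-conjugation commutes with direct sums, $(\sigma\oplus\rho)^\omega=\sigma^\omega\oplus\rho^\omega$, so by additivity of $C_\tau$ (Proposition \ref{PropCtausum}) it is enough to treat irreducible $\rho$ and then pass to a general $\rho$ by decomposing it into irreducibles. Second, the computation proving \eqref{sigmataurho} works verbatim with $\tau$ replaced by $\omega$: for any $G$-representations $\alpha,\beta$ the transpose map $A\mapsto A^T$ is an isomorphism $\Hom_G(\alpha,\beta)\cong\Hom_G(\beta^\omega,\alpha^\omega)$, since transposing $A\alpha(\omega(g))=\beta(\omega(g))A$ gives $\alpha^\omega(g)A^T=A^T\beta^\omega(g)$; this map is involutive because $(\rho^\omega)^\omega=\rho$, so $\omega$-conjugation induces a bijection of the set of equivalence classes of irreducibles onto itself. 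Throughout one uses \eqref{e:tau-omega} in the form $(\rho^\omega)^\tau=(\rho^\tau)^\omega$.

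Assume now $\rho$ irreducible, so that $\rho^\omega$ is irreducible as well by Proposition \ref{p:3.1}.(1). Since $\omega$-conjugation preserves equivalence, $\rho^\tau\sim\rho$ holds if and only if $(\rho^\tau)^\omega\sim\rho^\omega$, that is, if and only if $(\rho^\omega)^\tau\sim\rho^\omega$; by Theorem \ref{thmCtC}.(1) this says exactly that $C_\tau(\rho)=0$ if and only if $C_\tau(\rho^\omega)=0$, which disposes of the vanishing case.

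The remaining point---and the only one requiring some care---is to check that the sign is preserved when $C_\tau(\rho)=\pm1$, i.e.\ when $\rho^\tau\sim\rho$. Here I would let $A$ span the one-dimensional space $\Hom_G(\rho^\tau,\rho)$; by Schur's lemma $A$ may be taken to be an isomorphism, so $A^{-1}\in\Hom_G(\rho,\rho^\tau)$, and the transpose isomorphism above (applied with $\omega$), together with \eqref{e:tau-omega}, yields a nonzero element $B:=(A^{-1})^T=(A^T)^{-1}$ of $\Hom_G((\rho^\tau)^\omega,\rho^\omega)=\Hom_G((\rho^\omega)^\tau,\rho^\omega)$, which therefore spans this (again one-dimensional) space. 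Then $B^T=((A^{-1})^T)^T=A^{-1}$, so $B=B^T$ precisely when $A=A^T$ and $B=-B^T$ precisely when $A=-A^T$; by the description of $C_\tau$ for irreducibles given in the proof of Theorem \ref{thmCtC}.(1), this shows $C_\tau(\rho^\omega)=1$ (resp.\ $-1$) if and only if $C_\tau(\rho)=1$ (resp.\ $-1$). The one thing to keep straight in this last step is the bookkeeping of $W$, $W'$ and $W''$: everything happens after the canonical identification $W=W''$, under which, for both pairs $(\rho^\tau,\rho)$ and $((\rho^\omega)^\tau,\rho^\omega)$, the spaces $\HomS_G$ and $\HomA_G$ are genuinely the spaces of symmetric and antisymmetric intertwiners, so the conclusion about $B$ transfers correctly to $C_\tau(\rho^\omega)$. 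The general (reducible) case then follows by applying this to each irreducible constituent of $\rho$ and using additivity once more.
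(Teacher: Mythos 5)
Your proof is correct and follows essentially the same route as the paper's: reduction to the irreducible case via additivity of $C_\tau$, the identical argument in the vanishing case, and production of the spanning element $(A^T)^{-1}$ of $\Hom_G((\rho^\omega)^\tau,\rho^\omega)$ whose symmetry type matches that of $A$. The only cosmetic difference is that you invert before transposing and verify the sign via $B^T=A^{-1}$, whereas the paper transposes first and uses unitarity to rewrite $(A^T)^{-1}$ as $\overline{A}$.
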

\begin{proof}
By virtue of Proposition \ref{PropCtausum} it suffices to examine the case when $\rho$ is irreducible.
If $C_\tau(\rho)=0$ then $\rho^\tau \not\sim \rho$ and therefore $(\rho^\omega)^\tau = (\rho^\tau)^\omega
\not\sim \rho^\omega$ (recall that $\omega$ is involutory). We deduce that $C_\tau(\rho^\omega)=0$ as well.

Suppose now that $\rho^\tau \sim \rho$ and let $A \in \Hom_G(\rho^\tau,\rho)$ be a nontrivial unitary intertwiner. Then,
for all $g \in G$ we have $\rho(g)A = A\rho^\tau(g)$ so that
\[
(\rho^\tau)^\omega(g)A^T = \left(\rho^\tau(\omega(g))\right)^TA^T = \left(A \rho^\tau(\omega(g))\right)^T = 
\left(\rho(\omega(g))A\right)^T = A^T \rho^\omega(g).
\]
This shows that $\Hom_G(\rho^\omega, (\rho^\omega)^\tau) \equiv \Hom_G(\rho^\omega, (\rho^\tau)^\omega)$ is
spanned by $A^T$, so that $\Hom_G((\rho^\omega)^\tau, \rho^\omega)$ is spanned by $(A^T)^{-1} \equiv (A^T)^* = \overline{A}$.
Thus since $\overline{A}$ is symmetric (resp. antisymmetric) if and only if $A$ is symmetric (resp. antisymmetric), we
deduce that $C_\tau(\rho^\omega) = 1$ (resp. $C_\tau(\rho^\omega) = -1$) if and only if $C_\tau(\rho) = 1$ (resp. $C_\tau(\rho) = -1$).
\end{proof}

By taking $\omega = \tau$ we deduce the following
\begin{corollary}\label{ppii}
$C_\tau(\rho^\tau) = C_\tau(\rho)$.
\qed
\end{corollary}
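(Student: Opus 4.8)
The plan is to obtain the corollary as the special case $\omega = \tau$ of Lemma~\ref{l:7}. First I would check that this substitution is legitimate: $\tau$ is, by standing hypothesis, an involutory anti-automorphism of $G$; it trivially commutes with itself; and its ``square'' $\tau\circ\tau$ is the identity automorphism of $G$. Hence all the assumptions imposed on $\omega$ in the paragraph preceding Lemma~\ref{l:7} are satisfied, and identity \eqref{e:tau-omega} collapses to the tautology $(\rho^\tau)^\tau = (\rho^\tau)^\tau$. The adjective ``another'' in the statement of Lemma~\ref{l:7} is only informal and is never invoked in its proof, so there is no obstruction to taking $\omega = \tau$.

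Granting this, Lemma~\ref{l:7} immediately yields $C_\tau(\rho^\tau) = C_\tau(\rho)$, which is exactly the assertion. If instead a self-contained argument is wanted, I would transcribe the proof of Lemma~\ref{l:7} in this case: by Proposition~\ref{PropCtausum} it suffices to treat $\rho$ irreducible. If $\rho^\tau \not\sim \rho$, then also $(\rho^\tau)^\tau = \rho \not\sim \rho^\tau$, so both $C_\tau(\rho)$ and $C_\tau(\rho^\tau)$ equal $0$. If $\rho^\tau \sim \rho$, pick a nontrivial unitary $A \in \Hom_G(\rho^\tau,\rho)$; then $\overline{A}$ spans $\Hom_G\bigl((\rho^\tau)^\tau, \rho^\tau\bigr)$ and is symmetric (resp.\ antisymmetric) precisely when $A$ is, so $C_\tau(\rho^\tau)$ and $C_\tau(\rho)$ take the same value in $\{1,-1\}$. (Alternatively, one can package the same fact using the symmetry-type–preserving identifications of Lemma~\ref{l:2}.)

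I do not anticipate any real difficulty here: the entire content is the observation that Lemma~\ref{l:7} does not require $\omega$ to differ from $\tau$, and the only step meriting a second glance is confirming that each hypothesis of that lemma survives the degenerate choice $\omega = \tau$.
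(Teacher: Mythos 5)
Your proposal is correct and is exactly the paper's argument: the corollary is stated there as the immediate specialization $\omega=\tau$ of Lemma~\ref{l:7}, with no further proof needed. Your verification that the hypotheses on $\omega$ survive this degenerate choice, and your optional transcription of the lemma's proof, match the paper's reasoning.
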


Let now $K \leq G$ be  a $\tau$-invariant (that is $\tau(K) = K$) subgroup. It is clear that if $(\rho,W)$ is
a $G$-representation, then $\Res^G_K(\rho^\tau) = (\Res^G_K \rho)^\tau$. Conversely, suppose now that
$(\sigma,V)$ is a $K$-representation and let us show that $\Ind^G_K(\sigma^\tau) \sim (\Ind^G_K \sigma)^\tau$.
We set $\Ind^G_K V = \{F\in V^G: F(gk) = \sigma(k^{-1})F(g), \mbox{ for all } g\in G, k \in K\}$, $\rho =
\Ind^G_K \sigma$, so that (cf. \cite[Definition 1.6.1]{book2}) $[\rho(g_0)F](g) = F(g_0^{-1}g)$ for all
$g, g_0 \in G$ and $F \in \Ind_K^GV$, and 
$\Ind^G_K V' = \{F'\in (V')^G: F'(gk) = \sigma^\tau(k^{-1})F'(g), \mbox{ for all } g\in G, k \in K\}$,
$\widetilde{\rho} = \Ind^G_K \sigma^\tau$, so that $[\widetilde{\rho}(g_0)F'](g) = F'(g_0^{-1}g)$ for all
$g, g_0 \in G$ and $F' \in  \Ind_K^GV'$.

\begin{lemma}
\label{l:9}
The linear map $\xi\colon \Ind^G_K V' \to (\Ind^G_K V)'$ defined by setting
\[
(\xi F')(F) = \frac{1}{|K|} \sum_{g \in G} F'(g)(F(\tau(g^{-1})))
\]
for all $F \in \Ind^G_K V$, $F' \in \Ind^G_K V'$, yields an isomorphism between $\Ind^G_K \sigma^\tau$ and
$(\Ind^G_K \sigma)^\tau$.
\end{lemma}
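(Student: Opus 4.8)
\medskip

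The plan is to verify that $\xi$ intertwines $\Ind^G_K\sigma^\tau$ with $(\Ind^G_K\sigma)^\tau$ and that it is bijective; since $\dim\Ind^G_K V'=[G:K]\dim V=\dim(\Ind^G_K V)'$, bijectivity will follow from injectivity alone, i.e. from the non-degeneracy of the pairing $(F',F)\mapsto\frac1{|K|}\sum_{g\in G}F'(g)(F(\tau(g^{-1})))$. The map $\xi$ itself needs no well-definedness check: the right-hand side is a sum over all of $G$, involves no choices, and is linear in $F'$. A preliminary observation used throughout is that the summand $\phi(g):=F'(g)(F(\tau(g^{-1})))$ is right $K$-invariant: replacing $g$ by $gk$ with $k\in K$ and expanding via the anti-automorphism rule $\tau(k^{-1}g^{-1})=\tau(g^{-1})\tau(k^{-1})$, the $\tau$-invariance of $K$, the defining relations of $\Ind^G_K V$ and $\Ind^G_K V'$, and the definition \eqref{tipico} of $\sigma^\tau$, the two cocycle factors cancel since $\tau(k^{-1})\tau(k)=1_G$. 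Consequently, for any left transversal $g_1,\dots,g_n$ of $K$ in $G$,
\[
(\xi F')(F)=\sum_{i=1}^n F'(g_i)\bigl(F(\tau(g_i^{-1}))\bigr).
\]

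For the intertwining property, write $\rho=\Ind^G_K\sigma$ and $\widetilde\rho=\Ind^G_K\sigma^\tau$; I would check that $\xi\circ\widetilde\rho(g_0)=\rho^\tau(g_0)\circ\xi$ for each $g_0\in G$. Starting from $(\xi[\widetilde\rho(g_0)F'])(F)=\frac1{|K|}\sum_{g\in G}F'(g_0^{-1}g)(F(\tau(g^{-1})))$ and substituting $g\mapsto g_0 g$, the identity $\tau((g_0g)^{-1})=\tau(g_0)^{-1}\tau(g^{-1})$ lets one recognise $F(\tau((g_0g)^{-1}))=[\rho(\tau(g_0))F](\tau(g^{-1}))$, whence the sum collapses to $(\xi F')(\rho(\tau(g_0))F)$; by \eqref{tipico} this equals $(\rho^\tau(g_0)[\xi F'])(F)$, which is exactly what is wanted.

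For injectivity, suppose $\xi F'=0$. First note that $\{\tau(g_i^{-1})\}_{i=1}^n$ is again a left transversal of $K$ in $G$: indeed $\tau(g_i^{-1})K=\tau(g_j^{-1})K$ iff $\tau(g_i^{-1}g_j)=\tau(g_j)\tau(g_i^{-1})\in K$, iff $g_i^{-1}g_j\in\tau(K)=K$, iff $i=j$. Given an index $i$ and a vector $v\in V$, let $F_i^v\in\Ind^G_K V$ be the unique function supported on the single coset $\tau(g_i^{-1})K$ with $F_i^v(\tau(g_i^{-1}))=v$ (extended over that coset by the defining relation of $\Ind^G_K V$); this is possible precisely because $\tau(g_i^{-1})K$ is a single coset. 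Substituting $F=F_i^v$ into the displayed formula for $\xi F'$ kills every term with $j\ne i$, leaving $0=F'(g_i)(v)$; as $v$ is arbitrary, $F'(g_i)=0$, and since $i$ was arbitrary and a member of $\Ind^G_K V'$ is determined by its values on a transversal, $F'=0$. This proves injectivity, hence bijectivity.

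I expect the only mildly delicate points to be the correct placement of $\tau$ and of the inverses in the two cocycle-type identities (the right $K$-invariance of $\phi$ and the change of variable in the intertwining step) and the observation that $\tau$ permutes the cosets of $K$ — the latter being exactly what makes the indicator sections $F_i^v$ available. Everything else is routine, and the final bijectivity is then purely formal via the dimension count.
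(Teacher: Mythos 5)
Your proof is correct and follows essentially the same route as the paper's: collapse the sum over $G$ to a sum over a transversal using the defining relations of the two induced spaces, verify the intertwining identity by the substitution $g\mapsto g_0g$, and conclude bijectivity from injectivity plus equality of dimensions. The only difference is that you make the injectivity step fully explicit (checking that $\{\tau(g_i^{-1})\}$ is again a transversal and testing against indicator sections), which the paper leaves as a one-line assertion; this is a welcome elaboration rather than a different approach.
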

\begin{proof}
Let $S \subseteq G$ be a complete system of representatives for the set $G/K$ of left cosets of $K$ in $G$, so that
$G = \coprod_{s \in S}sK$. On the one hand: 
\[
\begin{split}
(\xi F')(F) & = \frac{1}{|K|} \sum_{g \in G} F'(g)(F(\tau(g^{-1})))\\
& = \frac{1}{|K|} \sum_{s \in S}\sum_{k \in K} F'(sk)(F(\tau((sk)^{-1})))\\
& = \frac{1}{|K|} \sum_{s \in S}\sum_{k \in K} [\sigma^\tau(k^{-1})F'(s)]\left(\sigma(\tau(k))F(\tau(s)^{-1})\right)\\
(\mbox{by \eqref{tipico}}) \ \ & = \frac{1}{|K|} \sum_{s \in S}\sum_{k \in K} F'(s)\left(\sigma(\tau(k^{-1}))\sigma(\tau(k)) F(\tau(s)^{-1})\right)\\
& = \sum_{s \in S} F'(s)\left(F(\tau(s)^{-1})\right).
\end{split}
\]
Since $F'$ is uniquely determined by $(F'(s))_{s \in S}$, we deduce that $\xi$ is injective. Moreover, as
$\dim \Ind^G_K V' = \dim (\Ind^G_K V)'$ we deduce that $\xi$ is indeed bijective.
On the other hand, for $g_0 \in G$, we have
\[
\begin{split}
[\rho^\tau(g_0)\xi F'](F) & =  [\xi F'](\rho(\tau(g_0))F) \ \ \ (\mbox{by \eqref{tipico}}\\
& = \frac{1}{|K|} \sum_{g \in G} F'(g)\left(F(\tau(g_0)^{-1}\tau(g)^{-1})\right)\\
\mbox{(setting $g_1 = g_0g$)} \ & = \frac{1}{|K|} \sum_{g_1 \in G} F'(g_0^{-1}g_1)\left(F(\tau(g_1)^{-1})\right)\\
 & = \frac{1}{|K|} \sum_{g_1 \in G} \left\{[\widetilde{\rho}(g_0)F'](g_1)\right\}\left(F(\tau(g_1)^{-1})\right)\\
 & = \left\{\xi[\widetilde{\rho}(g_0)F']\right\}(F).
 \end{split}
 \]
This shows that  $\rho^\tau(g_0)\xi = \xi \widetilde{\rho}(g_0)$ for all $g_0 \in G$ so that $\xi \in \Hom_G\left(\Ind^G_K \sigma^\tau,(\Ind^G_K \sigma)^\tau\right)$, completing the proof.
\end{proof}

\begin{theorem}
\label{t:10}
Let $K \leq G$ be a $\tau$-invariant subgroup. Let also $(\rho,W)$ be an irreducible $G$-representation
whose restriction $\Res^G_K\rho$ is \emph{multiplicity-free}, that is, $\Res^G_K (\rho,W) = \bigoplus_{i=1}^m (\sigma_i,V_i)$, with $\sigma_i$ irreducible and $\sigma_i \not\sim \sigma_j$ for $1 \leq i \neq j \leq m$.
Suppose that $\rho^\tau \sim \rho$ and $\sigma_i^\tau \sim \sigma_i$ for $i=1,2,\ldots,m$.
Then
\[
C_\tau(\sigma_i) = C_\tau(\rho)
\]
for all $i=1,2,\ldots,m$.
\end{theorem}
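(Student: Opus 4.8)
The plan is to leverage the one-dimensionality of $\Hom_G(\rho^\tau,\rho)$ together with the block decomposition induced by $W=\bigoplus_{i=1}^m V_i$, transferring both a spanning intertwiner and its symmetry type from $\rho$ down to each $\sigma_i$.

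First I would set up the restricted picture. Since $K$ is $\tau$-invariant, $\Res^G_K\rho^\tau=(\Res^G_K\rho)^\tau$; and because $\tau(k)\in K$ leaves each $V_i$ invariant for every $k\in K$, under the canonical identification $W'=\bigoplus_{i=1}^m V_i'$ the $K$-representation $\Res^G_K\rho^\tau$ acts on the $i$-th summand $V_i'$ exactly as $\sigma_i^\tau$. By hypothesis $\sigma_i^\tau\sim\sigma_i$, so $\Res^G_K\rho^\tau\sim\bigoplus_{i=1}^m\sigma_i$. Also, $\rho$ is irreducible and $\rho^\tau\sim\rho$, so by Theorem~\ref{thmCtC}(1) (and the computation in its proof) $\varepsilon:=C_\tau(\rho)\in\{1,-1\}$ and $\Hom_G(\rho^\tau,\rho)$ is spanned by a single nonzero operator $A\colon W'\to W$ satisfying $A^T=\varepsilon A$.

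Next I would identify $A$ with a block-diagonal operator and see that its diagonal blocks do not vanish. Viewing $A\in\Hom_G(\rho^\tau,\rho)\subseteq\Hom_K\big(\Res^G_K\rho^\tau,\Res^G_K\rho\big)$ and writing $A=(A_{ij})$ with $A_{ij}\in\Hom_K(\sigma_j^\tau,\sigma_i)$ relative to $W'=\bigoplus_j V_j'$ and $W=\bigoplus_i V_i$, Schur's lemma forces $A_{ij}=0$ for $i\neq j$: indeed $\sigma_j^\tau$ and $\sigma_i$ are irreducible (Proposition~\ref{p:3.1}(1)) and $\sigma_j^\tau\sim\sigma_j\not\sim\sigma_i$. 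Hence $A=\bigoplus_{i=1}^m A_{ii}$. Moreover $\ker A$ is a $\rho^\tau$-invariant subspace of the irreducible $W'$ with $A\neq0$, so $A$ is injective; since $A_{ii}=0$ would give $V_i'\subseteq\ker A$, each $A_{ii}\neq0$, and therefore $A_{ii}$ spans the one-dimensional space $\Hom_K(\sigma_i^\tau,\sigma_i)$ (here using $\sigma_i^\tau\sim\sigma_i$ irreducible).

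Finally I would propagate the symmetry type. Using the block-transpose identity (the evident $m\times m$ analogue of the $2\times2$ formula recorded in Subsection~\ref{ss:linear-algebra}, together with the identification $V_i''\cong V_i$), a block-diagonal operator satisfies $(A^T)_{ii}=(A_{ii})^T$, so $A^T=\varepsilon A$ forces $(A_{ii})^T=\varepsilon A_{ii}$ for every $i$; that is, $A_{ii}$ is symmetric when $\varepsilon=1$ and antisymmetric when $\varepsilon=-1$. Since $A_{ii}$ spans $\Hom_K(\sigma_i^\tau,\sigma_i)$, Theorem~\ref{thmCtC}(1) applied to the irreducible $\sigma_i$ yields $C_\tau(\sigma_i)=\varepsilon=C_\tau(\rho)$, which is the claim. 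I do not expect a genuine obstacle: the only delicate point is the bookkeeping with the dual decomposition $W'\cong\bigoplus_i V_i'$ and the compatibility of transposition with this block structure; once those are pinned down, block-diagonality (Schur), non-vanishing of the blocks (injectivity of $A$), and inheritance of the sign $\varepsilon$ (block transpose) are each immediate.
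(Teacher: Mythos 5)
Your proposal is correct and follows essentially the same route as the paper's proof: identify $W'\cong\bigoplus_i V_i'$ with $\Res^G_K\rho^\tau$ acting as $\sigma_i^\tau$ on each summand, use Schur's lemma to force the spanning intertwiner $A$ to be block-diagonal $A=\bigoplus_i A_i$, and transfer the symmetry type via $A^T=\bigoplus_i A_i^T$. Your explicit remark that injectivity of $A$ guarantees each diagonal block is nonzero is a point the paper leaves implicit, but the argument is the same.
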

\begin{proof}
Let us set, for $i=1,2,\ldots,m$, 
$$
W_i' = \left\{w' \in W': \ker w' = \left(\bigoplus_{j=1}^{i-1} V_j\right) \bigoplus \left(\bigoplus_{j=i+1}^{m} V_j\right)\right\} \cong V_i'.
$$
If we identify $V'_i$ with $W_i'$  then $(\Res^G_K \rho^\tau)\vert_{W_i'} = \sigma_i^\tau$: indeed
\[
[\rho^\tau(k)w'](v) = w'[\rho(\tau(k))v] = w'[\sigma_i(\tau(k))v] = [\sigma_i^\tau(k)w'](v),
\]
for all $w \in W_i'$, $v \in V_i$, and $k \in K$ (clearly $W_i'$ is $K$-invariant).

Now, if $A \in \Hom_G(\rho^\tau, \rho)$, that is, $A\rho^\tau(g) = \rho(g)A$ for all $g \in G$, we deduce that
\[
A \sigma_i^\tau(k)w' = A \rho^\tau(k)w' = \rho(k)Aw'
\]
for all $w' \in W_i'$ and $k \in K$. It follows that $A \vert_{W_i'} \colon W_i' \to V_i$ (recall that
$\sigma_i^\tau \sim \sigma_i$ and $\sigma_i^\tau \not\sim \sigma_j$ for $1 \leq i \neq j \leq m$).
Thus, setting $A_i = A \vert_{W_i'}$ we have $A = A_1 \oplus A_2 \oplus \cdots \oplus A_m$,
$A^T = A_1^T \oplus A_2^T \oplus \cdots \oplus A_m^T$, and $A$ is symmetric (resp. antisymmetric) if and
only if $A_1, A_2, \ldots, A_m$ are all symmetric (resp. antisymmetric).
\end{proof}

\begin{lemma}
\label{l:11}
Let $\rho$ be a $G$-representation and denote by $\rho = \bigoplus_{j=1}^n \rho_j$ a decomposition into
irreducibles (now the sub-representations $\rho_j$ need not be pairwise inequivalent). Then
$\dim \HomA_G(\rho^\tau, \rho) = 0$ if and only if for every $j=1,2,\ldots, n$ one of the following
conditions holds:
\begin{enumerate}[{\rm (i)}]
\item $C_\tau(\rho_j) = 1$ and $\rho_j \not\sim \rho_k$ for all $k \neq j$;
\item $C_\tau(\rho_j) = 0$ and $\rho_j^\tau \not\sim \rho_k$ for all $k \neq j$.
\end{enumerate}
Different $j$'s may satisfy different conditions.
\end{lemma}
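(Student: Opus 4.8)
The plan is to argue by induction on $n$, peeling off one irreducible summand at a time by means of the decomposition \eqref{sigmataurhoA}. For $n=1$ both families of conditions over $k\neq j$ are vacuous, so the assertion reduces to the statement that $\dim\HomA_G(\rho_1^\tau,\rho_1)=0$ if and only if $C_\tau(\rho_1)\in\{0,1\}$; since $\rho_1$ is irreducible this is immediate from Theorem \ref{thmCtC}(1): if $\rho_1^\tau\not\sim\rho_1$ the whole space $\Hom_G(\rho_1^\tau,\rho_1)$ vanishes, while if $\rho_1^\tau\sim\rho_1$ its unique generator is symmetric when $C_\tau(\rho_1)=1$ and antisymmetric when $C_\tau(\rho_1)=-1$.

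For the inductive step write $\rho=\sigma\oplus\rho_n$ with $\sigma=\bigoplus_{j=1}^{n-1}\rho_j$. Then \eqref{sigmataurhoA} gives
\[
\HomA_G(\rho^\tau,\rho)\cong\HomA_G(\sigma^\tau,\sigma)\oplus\HomA_G(\rho_n^\tau,\rho_n)\oplus\Hom_G(\sigma^\tau,\rho_n),
\]
so $\dim\HomA_G(\rho^\tau,\rho)=0$ if and only if each of the three summands vanishes. The middle one vanishes, exactly as in the base case, precisely when $C_\tau(\rho_n)\in\{0,1\}$. Since $\Hom_G(\sigma^\tau,\rho_n)=\bigoplus_{j=1}^{n-1}\Hom_G(\rho_j^\tau,\rho_n)$ and each $\Hom_G(\rho_j^\tau,\rho_n)$ is, by Proposition \ref{p:3.1}(1) and Schur's lemma, zero- or one-dimensional according as $\rho_j^\tau\not\sim\rho_n$ or $\rho_j^\tau\sim\rho_n$, the third summand vanishes precisely when $\rho_j^\tau\not\sim\rho_n$ for all $j<n$ (equivalently $\rho_n^\tau\not\sim\rho_j$, by \eqref{sigmataurho}). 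Finally, by the induction hypothesis applied to $\sigma=\bigoplus_{j=1}^{n-1}\rho_j$, the first summand vanishes if and only if, for each $j<n$, condition (i) or (ii) holds with $k$ ranging over $\{1,\dots,n-1\}\setminus\{j\}$.

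It then remains to combine these three conditions into the statement of the lemma, and this bookkeeping is the part that requires care: one must track which inequivalences come for free from $C_\tau(\rho_j)=1$ and which have to be imposed. If $C_\tau(\rho_n)=1$ then $\rho_n^\tau\sim\rho_n$, so ``$\rho_n^\tau\not\sim\rho_j$ for $j<n$'' reads ``$\rho_n\not\sim\rho_j$'', i.e.\ (i) holds for $j=n$; if $C_\tau(\rho_n)=0$ then (ii) holds for $j=n$. For $j<n$, the extra hypothesis $\rho_j^\tau\not\sim\rho_n$ upgrades the index range from $\{1,\dots,n-1\}\setminus\{j\}$ to $\{1,\dots,n\}\setminus\{j\}$: in case (i), $C_\tau(\rho_j)=1$ turns $\rho_j^\tau\not\sim\rho_n$ into $\rho_j\not\sim\rho_n$, and in case (ii) it is precisely the missing clause. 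The converse implication is the same chain of equivalences read backwards: conditions (i)/(ii) for all $j\le n$ force $C_\tau(\rho_n)\in\{0,1\}$, force $\rho_j^\tau\not\sim\rho_n$ for $j<n$ (directly in case (ii), via $\rho_j^\tau\sim\rho_j$ in case (i)), and restrict to the induction hypothesis for $\sigma$.

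I expect no genuine obstacle beyond this last case analysis. One could equally avoid the induction and argue directly: decompose $\Hom_G(\rho^\tau,\rho)=\bigoplus_{j,k}\Hom_G(\rho_j^\tau,\rho_k)$, observe that the transposition $A\mapsto A^T$ of Proposition \ref{p:3.1}(2) fixes each diagonal block and interchanges the two blocks indexed by an unordered pair $\{j,k\}$ (the antisymmetric elements of $\Hom_G(\rho_j^\tau,\rho_k)\oplus\Hom_G(\rho_k^\tau,\rho_j)$ being those of the form $(B,-B^T)$ with $B\in\Hom_G(\rho_j^\tau,\rho_k)$), whence
\[
\dim\HomA_G(\rho^\tau,\rho)=\sum_{j}\dim\HomA_G(\rho_j^\tau,\rho_j)+\sum_{j<k}\dim\Hom_G(\rho_j^\tau,\rho_k);
\]
this is $0$ iff every summand is, and the same translation into (i)/(ii) concludes.
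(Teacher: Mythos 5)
Your proof is correct and follows essentially the same route as the paper: the paper also peels off one irreducible summand at a time via repeated application of \eqref{sigmataurhoA}, arriving at exactly the formula $\dim\HomA_G(\rho^\tau,\rho)=\sum_{j}\dim\HomA_G(\rho_j^\tau,\rho_j)+\sum_{j<k}\dim\Hom_G(\rho_j^\tau,\rho_k)$ that you give in your closing paragraph. The only cosmetic difference is in the final reindexing from $k>j$ to $k\neq j$, which the paper dispatches by noting that the ordering of the summands is arbitrary, whereas you carry it through the induction explicitly; both are fine.
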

\begin{proof}
Let us set $\sigma_i = \bigoplus_{j=i}^n \rho_j$ for $i=1,2,\ldots,n$. By repeatedly applying Lemma \ref{l:2}, 
we deduce that
\[
\begin{split}
\dim \HomA_G(\rho^\tau, \rho) & = \dim \HomA_G((\rho_1 \oplus \sigma_2)^\tau,\rho_1 \oplus \sigma_2)\\
& = \dim \HomA_G(\rho_1^\tau,\rho_1) + \dim \HomA_G(\sigma_2^\tau,\sigma_2) + \dim \Hom_G(\rho_1^\tau,\sigma_2)\\
&  \cdots \\
& = \sum_{j=1}^n \dim \HomA_G(\rho_j^\tau,\rho_j) + \sum_{j=1}^{n-1} \dim \Hom_G(\rho_j^\tau,\sigma_{j+1}).
\end{split}
\]
Thus $\dim \HomA_G(\rho^\tau, \rho) = 0$ if and only if $\dim \HomA_G(\rho_j^\tau,\rho_j) = 0$  for all $j=1,2,\ldots,n$ and $\dim \Hom_G(\rho_j^\tau,\sigma_{j+1}) = 0$ for all $j=1,2,\ldots,n-1$.
It follows that if $\rho_j^\tau \sim \rho_j$ we necessarily have $C_\tau(\rho_j) = 1$ and $\rho_j \not\sim \rho_k$
for all $j < k \leq n$, while if $\rho_j^\tau \not\sim \rho_j$ we necessarily have $C_\tau(\rho_j) = 0$ and $\rho_j^\tau \not\sim \rho_k$ for all $j < k \leq n$. Now, in both cases, the condition $k > j$ can be replaced by
$k \neq j$: since the order  in $\rho = \bigoplus_{j = 1}^n \rho_j$ is arbitrary, we may always suppose $j = 1$.
\end{proof}

\section{Multiplicity-free permutation representations: the Mackey-Gelfand criterion}
\label{s:MFPR}
Let $G$ be a finite group and suppose we are given  a transitive action $\pi \colon G \to \Sym(X)$ of $G$ on a (finite) set $X$. Fix $x_0 \in X$ and denote by
$K = \Stab_G^\pi(x_0) = \{g \in G: \pi(g)x_0 = x_0\} \leq G$ its $G$-{\it stabilizer}. Then we identify the \emph{homogeneous space} $X$ with the set $G/K$ of left cosets of $K$ in $G$. 
This way, the action is given by $\pi(g)x = (gg')K$ for all $g \in G$ and $x = g'K \in X$ (note that, in
particular, $x_0 \equiv K$). We also denote by ${\mathcal O}_G^\pi(X)$ the corresponding set of $G$-orbits in $X$.

Let also $\tau \colon G \to G$ be an involutory anti-automorphism of $G$ which does not necessarily preserve $K$. 
Let $Y = G/\tau(K)$ denote the corresponding homogeneous space and by $y_0 = \tau(K) \in Y$ the corresponding $\tau(K)$-fixed point.

We denote by $L(X)$ the vector space of all functions $f \colon X \to \C$ and denote by
$(\lambda_\pi,L(X))$ the \emph{permutation representation} associated with the action $\pi$, that is, the $G$-representation defined by
\[
[\lambda_\pi(g)f](x) = f(\pi(g^{-1})x)
\]
for all $g \in G$, $f \in L(X)$ and $x \in X$.
Also, we denote by $\gamma \colon G \to \Sym(Y)$ the action of $G$ on $Y$:  $\gamma(g)y =
gg'\tau(K)$ for all $g \in G$ and $y = g'\tau(K) \in Y$. We then define a map 
$\theta \colon X \to Y$
by setting, for every $x \in X$, 
\begin{equation}
\label{e:theta}
\theta(x) = \gamma(\tau(g^{-1}))y_0 \quad \mbox{where $g \in G$ satisfies $\pi(g)x_0 = x$.}
\end{equation}
 Note that the map is well defined:  if 
$g_1, g_2 \in G$ satisfy $\pi(g_1)x_0 = \pi(g_2)x_0$, then there exists $k \in K$ such that $g_2 = g_1k$ and therefore \[
\gamma(\tau(g_2^{-1}))y_0 = \gamma(\tau(k^{-1}g_1^{-1}))y_0 = \gamma(\tau(g_1^{-1})) \gamma(\tau(k^{-1}))y_0 = \gamma(\tau(g_1^{-1}))y_0.
\] 
It is clear that $\theta$ is a bijection and that $\theta(x_0) = y_0$.

We now define a second action $\pi^\tau \colon G \to \Sym(X)$ by setting
\[
\pi^\tau(g)x = \pi(\tau(g^{-1}))x
\]
for all $g \in G$ and $x\in X$.
The associated permutation representation $(\lambda_{\pi^\tau},L(X))$  is then given by
\[
[\lambda_{\pi^\tau}(g)f](x) = f(\pi(\tau(g))x)
\]
for all $g \in G$, $f \in L(X)$ and $x \in X$.

The  $\tau$-conjugate representation (cf. Section \ref{sec:tau})
$(\lambda_\pi^\tau, L(X)')$   of $\lambda_\pi$ is then given by
$[\lambda_\pi^\tau(g)\varphi'](f) = \varphi'(\lambda_\pi(\tau(g))f)$ for all
$g \in G$, $\varphi' \in L(X)'$ and $f \in L(X)$.
In the following, we identify the dual $L(X)'$ with $L(X)$ via the bijective linear map
\begin{equation}
\label{duale}
\begin{array}{ccc}
L(X) &  \to & L(X)'\\
\varphi & \mapsto & \varphi'
\end{array}
\end{equation}
where
\[
\varphi'(f) = \sum_{x \in X} \varphi(x)f(x)
\]
for all $f \in L(X)$.

Finally, we fix $S \subseteq G$ a complete set of representatives of the double $\tau(K)\backslash G/K$-cosets, so that
\[
G = \coprod_{s \in S} \tau(K)sK.
\]
Observe that $\tau(\tau(K)sK) = \tau(K)\tau(s)K$ and therefore $\tau(S)$  is also  a system of representatives of the double cosets.
Let also $\pi^\tau \times \pi \colon G \to \Sym(X\times X)$  be  the action defined by 
\[
(\pi^\tau \times \pi)(g)(x_1,x_2) = (\pi^\tau(g)x_1, \pi(g)x_2)
\]
for all $g \in G$ and $x_1, x_2 \in X$.
Also, we denote by $\flat \in \Sym(X \times X)$ the involution defined by $(x_1,x_2)^\flat = (x_2, x_1)$
for all $x_1, x_2 \in X$. 

\begin{lemma}
\label{l:1MF}
\begin{enumerate}[{\rm (1)}]
\item 
The bijective map $\theta$ in \eqref{e:theta} satisfies  $\theta \pi^\tau(g) = \gamma(g)\theta$ for all $g \in G$.
Thus $\Stab_{G}^{\pi^\tau}(x_0) = \tau(K)$.
\item 
We have 
\begin{equation}
\label{auroramia}
(\lambda_\pi^\tau, L(X)') \sim (\lambda_{\pi^\tau},L(X))
\end{equation}
via the bijective map \eqref{duale}.
\item
The maps
\[
\begin{array}{cccc}
\Psi_S \colon & S & \longrightarrow & {\mathcal O}_{\tau(K)}^\pi(X) \equiv {\mathcal O}_{K}^{\pi^\tau}(X)\\
& s & \mapsto & \{\pi(\tau(k)s)x_0: k \in K\} 
\end{array}
\]
and 
\[
\begin{array}{cccc}
\Xi_S \colon & S &  \longrightarrow & {\mathcal O}_{G}^{\pi^\tau \times \pi}(X \times X)\\
& s &  \mapsto & \{(\pi^\tau(g)x_0, \pi(gs)x_0): g \in G\} 
\end{array}
\]
are bijective. Moreover, $\Xi_{\tau(S)}(\tau(s)) = \left(\Xi_S(s)\right)^\flat$ for all $s \in S$,
where  $\Xi_{\tau(S)}(\tau(s)) = \{(\pi^\tau(g)x_0, \pi(g\tau(s))x_0): g \in G\}$.
\end{enumerate}
\end{lemma}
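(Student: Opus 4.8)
The three parts are verifications relying only on $\tau$ being an anti-automorphism with $\tau^2=\id$ (so that $\tau(ab)=\tau(b)\tau(a)$ and $\tau(g^{-1})=\tau(g)^{-1}$), and the plan is to carry out each one directly. For part (1), given $x\in X$ choose $g\in G$ with $\pi(g)x_0=x$; then $\pi^\tau(g_0)x=\pi(\tau(g_0^{-1})g)x_0$, so by the definition \eqref{e:theta} of $\theta$,
\[
\theta\big(\pi^\tau(g_0)x\big)=\gamma\big(\tau(g^{-1}\tau(g_0))\big)y_0=\gamma\big(\tau(\tau(g_0))\tau(g^{-1})\big)y_0=\gamma(g_0)\gamma(\tau(g^{-1}))y_0=\gamma(g_0)\theta(x).
\]
Thus $\theta$ is a bijection intertwining $\pi^\tau$ with $\gamma$, and since $\theta(x_0)=y_0$ it carries $\Stab_G^{\pi^\tau}(x_0)$ onto $\Stab_G^\gamma(y_0)=\tau(K)$, giving $\Stab_G^{\pi^\tau}(x_0)=\tau(K)$.

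For part (2), the map $\varphi\mapsto\varphi'$ of \eqref{duale} is a linear bijection $L(X)\to L(X)'$, so it suffices to check it intertwines $\lambda_{\pi^\tau}$ with $\lambda_\pi^\tau$. Evaluating on a test function $f\in L(X)$ and substituting $y=\pi(\tau(g))x$ (note $\pi(\tau(g))^{-1}=\pi(\tau(g^{-1}))$),
\[
\big([\lambda_{\pi^\tau}(g)\varphi]\big)'(f)=\sum_{x\in X}\varphi(\pi(\tau(g))x)f(x)=\sum_{y\in X}\varphi(y)f(\pi(\tau(g^{-1}))y)=\varphi'\big(\lambda_\pi(\tau(g))f\big)=\big[\lambda_\pi^\tau(g)\varphi'\big](f),
\]
which is exactly \eqref{auroramia}.

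For part (3), I would first observe that $\pi^\tau(k)=\pi(\tau(k^{-1}))$ for $k\in K$, so the $K$-orbit of a point under $\pi^\tau$ coincides, as a subset of $X$, with its $\tau(K)$-orbit under $\pi$; this is the identification ${\mathcal O}_{\tau(K)}^\pi(X)\equiv{\mathcal O}_{K}^{\pi^\tau}(X)$. Then $\Psi_S$ is the classical bijection between the double cosets $\tau(K)\backslash G/K$ and the $\tau(K)$-orbits on $G/K$: the set $\{\pi(\tau(k)s)x_0:k\in K\}$ is precisely the $\tau(K)$-orbit of $sK$ (as $\tau(k)$ runs over $\tau(K)$), every orbit meets some $\tau(K)sK$, and two representatives in $S$ yield the same orbit iff they lie in the same double coset. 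For $\Xi_S$ I would invoke part (1): since $\pi^\tau$ is transitive with point stabilizer $\tau(K)$ at $x_0$, the assignment sending a $G$-orbit $\mathcal O\subseteq X\times X$ under $\pi^\tau\times\pi$ to $\{x\in X:(x_0,x)\in\mathcal O\}$ is a bijection onto ${\mathcal O}_{\tau(K)}^\pi(X)$, and composing it with $\Psi_S$ recovers $\Xi_S$, because the $G$-orbit of $(x_0,\pi(s)x_0)$ equals $\{(\pi^\tau(g)x_0,\pi(gs)x_0):g\in G\}$. Finally, for the ``moreover'' part, the change of variable $h=\tau(g)^{-1}\tau(s)^{-1}$ (so that $\tau(h^{-1})=gs$ and $h\tau(s)=\tau(g)^{-1}$) is a bijection of $G$ and yields
\[
\big(\pi^\tau(h)x_0,\ \pi(h\tau(s))x_0\big)=\big(\pi(\tau(h^{-1}))x_0,\ \pi(\tau(g^{-1}))x_0\big)=\big(\pi(gs)x_0,\ \pi^\tau(g)x_0\big),
\]
so that $\Xi_{\tau(S)}(\tau(s))=(\Xi_S(s))^\flat$.

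None of the individual steps is deep; the only place demanding care is part (3), where one must keep the two descriptions of the relevant orbits ($K$-orbits under $\pi^\tau$ versus $\tau(K)$-orbits under $\pi$) consistently matched and perform the flip substitution without reversing a product in the wrong order or dropping an inverse.
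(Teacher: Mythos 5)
Your proof is correct in all three parts, and parts (1) and (2) are the same computations as in the paper (with the roles of the two group elements relabelled in (1)). The only place where you genuinely diverge is the bijectivity of $\Xi_S$ in part (3): the paper verifies surjectivity directly, by writing $g_1^{-1}g_2=\tau(k_1)sk_2$ and checking that $g=g_1\tau(k_1)$ sends $(x_0,\pi(s)x_0)$ to an arbitrary prescribed pair, and then checks injectivity by reducing to the stabilizer computation; you instead invoke the general correspondence between $G$-orbits on $X\times X$ and $\Stab(x_0)$-orbits on the second factor (legitimate here thanks to part (1), which identifies $\Stab_G^{\pi^\tau}(x_0)=\tau(K)$), and observe that the resulting slice map composed with $\Xi_S$ recovers $\Psi_S$. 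Your route is slightly more conceptual and shorter, at the cost of leaning on a standard fact that the paper only records (in Remark \ref{remup}) for the untwisted action; the paper's version has the advantage of being self-contained and of exhibiting explicitly the group element realizing surjectivity. Your substitution $h=\tau(g)^{-1}\tau(s)^{-1}$ for the flip identity is a correct variant of the paper's $g'=\tau(g^{-1})s^{-1}$ (they differ only in which side of the identity is being parametrized), and all the anti-automorphism manipulations check out.
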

\begin{proof} (1) Let $x\in X$ and $g_0 \in G$ be such that $x = \pi(g_0)x_0$. Then for all $g \in G$ we have
\[
\begin{split}
\theta(\pi^\tau(g)x) & = \theta(\pi(\tau(g^{-1}))\pi(g_0)x_0)\\
& = \theta(\pi(\tau(g^{-1})g_0)x_0)\\
  \mbox{(by \eqref{e:theta})} \  & = \gamma(g \tau(g_0^{-1}))y_0\\
\mbox{(since $\gamma$ is an action)} \ & = \gamma(g)\gamma(\tau(g_0^{-1}))y_0\\
\mbox{(again by \eqref{e:theta})} \ & = \gamma(g) \theta(\pi(g_0)x_0)\\
& = \gamma(g) \theta(x).
\end{split}
\]

(2) Let now $\varphi, f \in L(X)$ and $g \in G$. Then we have
\[
\begin{split}
[\lambda_\pi^\tau(g) \varphi'](f) & = \varphi'\left(\lambda_\pi(\tau(g))f\right)\\
& = \sum_{x \in X} \varphi(x) f(\pi(\tau(g^{-1}))x)\\
\mbox{(setting $z=\pi(\tau(g^{-1}))x$)} \ & = \sum_{z \in X} \varphi(\pi(\tau(g))z) f(z)\\
& = \sum_{z \in X} [\lambda_{\pi^\tau}(g)\varphi](z)f(z)\\
& = \left(\lambda_{\pi^\tau}(g)\varphi\right)'(f),
\end{split}
\]
so that $\lambda_\pi^\tau(g) \varphi' = \left(\lambda_{\pi^\tau}(g)\varphi\right)'$. In other words,
the bijective map \eqref{duale} yields the equivalence \eqref{auroramia}.

(3) By definition of $S$,  $\Psi_S$ is well defined and bijective. As for $\Xi_S$, let $x_1, x_2 \in X$.
Since the action $\pi^\tau$ (resp. $\pi$) is transitive, we can find $g_1 \in G$ ($g_2 \in G$) such that 
$\pi^\tau(g_1)x_0 = x_1$ (resp. $\pi(g_2)x_0 = x_2$). Let $k_1, k_2 \in K$ and $s \in S$ be such that
$\tau(k_1)sk_2 = g_1^{-1}g_2$. Setting $g=g_1\tau(k_1)$, we then have
\[
(\pi^\tau(g)x_0, \pi(gs)x_0)  = (\pi^\tau(g_1)\pi(k_1^{-1})x_0, \pi(g_1)\pi(\tau(k_1)sk_2)x_0) = (x_1,x_2).
\]
This shows that $\Xi_S$ is surjective. Now we show that it is injective: if $\Xi_S(s_1) = \Xi_S(s_2)$ for $s_1, s_2 \in S$ then there exists $g \in G$ such that $(\pi^\tau(g)x_0, \pi(gs_2)x_0) = (x_0, \pi(s_1)x_0)$, and  this implies that $g \in \tau(K)$ and 
$s_1^{-1}gs_2\in K$, so that  $s_1 \in \tau(K)s_2K$ and necessarily $s_1 = s_2$
Finally, let $s \in S$ and $g \in G$ and set $g' = \tau(g^{-1})s^{-1}$. It is then immediate to check that
\[
(\pi^\tau(g)x_0, \pi(g\tau(s))x_0) = (\pi(g's)x_0, \pi^\tau(g')x_0) = (\pi^\tau(g')x_0,\pi(g's)x_0)^\flat.
\]
\end{proof}

\begin{corollary}
\label{c:auroretta}
Suppose that $K = \tau(K)$. Then $\pi \equiv \gamma$, so that $\theta\pi^\tau(g) = \pi(g)\theta$ for all $g \in G$ and 
$\lambda_\pi^\tau \sim \lambda_\pi$.
\end{corollary}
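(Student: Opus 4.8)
The plan is to unwind the definitions and then quote Lemma~\ref{l:1MF}. First I would note that when $K = \tau(K)$ the homogeneous space $Y = G/\tau(K)$ is literally $X = G/K$, the base point $y_0 = \tau(K)$ is $x_0 = K$, and the action $\gamma$ on $Y$ — left translation on cosets of $\tau(K)$ — coincides with $\pi$; hence $\pi \equiv \gamma$. Feeding this identification into part~(1) of Lemma~\ref{l:1MF}, which states $\theta\pi^\tau(g) = \gamma(g)\theta$ for all $g \in G$, immediately gives $\theta\pi^\tau(g) = \pi(g)\theta$ for all $g \in G$, which is the second assertion.

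For the equivalence $\lambda_\pi^\tau \sim \lambda_\pi$ I would proceed in two steps. Step one: the bijection $\theta\colon X \to X$ intertwines the $G$-actions $\pi^\tau$ and $\pi$, so the pullback map $f \mapsto f\circ\theta^{-1}$ on $L(X)$ is a linear bijection intertwining $\lambda_{\pi^\tau}$ and $\lambda_\pi$; this is the standard fact that a $G$-equivariant bijection of $G$-sets induces an isomorphism of the corresponding permutation representations, and the verification is a one-line computation using the relation $\pi^\tau(g^{-1})\theta^{-1} = \theta^{-1}\pi(g^{-1})$ obtained from Step~1. Thus $\lambda_{\pi^\tau}\sim\lambda_\pi$. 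Step two: part~(2) of Lemma~\ref{l:1MF} already records $\lambda_\pi^\tau \sim \lambda_{\pi^\tau}$ via the identification~\eqref{duale} of $L(X)'$ with $L(X)$. Chaining the two equivalences yields $\lambda_\pi^\tau \sim \lambda_{\pi^\tau} \sim \lambda_\pi$, as required.

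There is essentially nothing hard here: the whole statement is a direct specialization of Lemma~\ref{l:1MF} to the case $\tau(K) = K$, and the only substantive ingredient — that a $G$-equivariant bijection of $G$-sets induces an equivalence of permutation representations — is entirely elementary. The one place to be slightly careful is the purely notational identification of the triple $(Y,\gamma,y_0)$ with $(X,\pi,x_0)$ when $\tau(K)=K$, after which everything follows formally.
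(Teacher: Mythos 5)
Your proof is correct and follows essentially the same route as the paper: identify $(Y,\gamma,y_0)$ with $(X,\pi,x_0)$ when $\tau(K)=K$, specialize Lemma \ref{l:1MF}.(1) to get the intertwining property of $\theta$, and then use the pullback along $\theta$ (the paper writes $(Tf)(x)=f(\theta(x))$, which is just the inverse of your map) together with Lemma \ref{l:1MF}.(2) to conclude $\lambda_\pi^\tau\sim\lambda_{\pi^\tau}\sim\lambda_\pi$.
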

\begin{proof}
Just note that defining  $T\colon L(X) \to L(X)$ by setting $(Tf)(x) = f(\theta(x))$ for all $f \in L(X)$,  $x\in X$ , we get a linear bijection such
that $T\lambda_\pi(g) = \lambda_{\pi^\tau}(g)T$, for all $g \in G$.
\end{proof}

\begin{remark}\label{remup}
{\rm  Lemma \ref{l:1MF} generalizes the  well known facts that the maps $s \mapsto \{\pi(ks)x_0, k \in K\}$ and 
$s \mapsto \{(\pi(g)x_0, \pi(gs)x_0): g \in G\}$ are bijections respectively between $S$ and $\mathcal{O}_K^\pi(X)$ and between $S$ and $\mathcal{O}_G^{\pi\times \pi}(X \times X)$. See  \cite[Section 3.13 ]{book} and 
 \cite[Section 1.5.3]{book2}.}
\end{remark} 

We shall say that a $(\pi^\tau \times \pi)$-orbit of $G$ on $X \times X$ is $\tau$-\emph{symmetric} (resp. $\tau$-\emph{antisymmetric}) provided it is (resp. is not) invariant under the flip $\flat \colon (x_1,x_2) \mapsto (x_2,x_1)$.  We then denote by $m_1$ (resp. $m_2$) the number of such $\tau$-symmetric (resp. $\tau$-antisymmetric) orbits. From Lemma \ref{l:1MF}.(3) we then have $m_1$ (resp. $m_2$) equals the number of $s \in S$ such that $\tau(s) \in \tau(K) sK$ (resp. $\tau(s) \not\in \tau(K)sK$). Note that $m_1 + m_2 = |S|$ and that $m_2$ is even.

\begin{theorem} We have
\label{t:2MF}
\begin{equation}
\label{e:laforgia1}
\dim \HomS_G(\lambda_\pi^\tau, \lambda_\pi) = m_1 + \frac{1}{2}m_2;
\end{equation}
\begin{equation}
\label{e:laforgia2}
\dim \HomA_G(\lambda_\pi^\tau, \lambda_\pi) = \frac{1}{2}m_2.
\end{equation}
\end{theorem}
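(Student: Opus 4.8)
The starting point is the standard fact that $\dim\Hom_G(\lambda_{\pi^\tau},\lambda_\pi)$ equals the number of $G$-orbits of $\pi^\tau\times\pi$ on $X\times X$ (the ``double coset'' count), which by Lemma \ref{l:1MF}.(2)-(3) equals $|S| = m_1+m_2$. To split this into the symmetric and antisymmetric parts, I would realize $\Hom_G(\lambda_\pi^\tau,\lambda_\pi)$ concretely: identifying $L(X)'$ with $L(X)$ via \eqref{duale} and using \eqref{auroramia}, an intertwiner corresponds to an element of $\Hom_G(\lambda_{\pi^\tau},\lambda_\pi)$, which in turn is given by a $G$-invariant \emph{kernel} $\kappa\colon X\times X\to\CC$, i.e.\ a function constant on the $(\pi^\tau\times\pi)$-orbits; the operator is $f\mapsto \sum_{x_2}\kappa(\cdot,x_2)f(x_2)$. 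So the space of intertwiners has as a basis the indicator functions $\{\mathbf{1}_\Omega : \Omega\in{\mathcal O}_G^{\pi^\tau\times\pi}(X\times X)\}$.

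\emph{The key step} is to track how transposition of operators acts on kernels under the identification \eqref{duale}. A direct computation (expanding both $A^T$ and $A$ against basis functions $\delta_{x}$) shows that if $A$ has kernel $\kappa(x_1,x_2)$, then $A^T$ has kernel $\kappa^\flat(x_1,x_2):=\kappa(x_2,x_1)$ — this is exactly why the flip $\flat$ enters. Hence $A$ is symmetric (resp. antisymmetric) iff its kernel satisfies $\kappa\circ\flat=\kappa$ (resp. $=-\kappa$). Since $\flat$ normalizes the $(\pi^\tau\times\pi)$-action (Lemma \ref{l:1MF}.(3): $\Xi_{\tau(S)}(\tau(s))=(\Xi_S(s))^\flat$), the flip permutes the orbits $\Omega$, fixing the $m_1$ $\tau$-symmetric ones and pairing up the $m_2$ $\tau$-antisymmetric ones into $m_2/2$ transposed pairs.

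It then remains to count fixed vectors of the flip-induced involution on the space of kernels. On the $m_1$-dimensional subspace spanned by indicators of $\tau$-symmetric orbits, the flip acts trivially, contributing $m_1$ to the symmetric count and $0$ to the antisymmetric count. On each transposed pair $\{\Omega,\Omega^\flat\}$ the flip swaps $\mathbf 1_\Omega\leftrightarrow\mathbf 1_{\Omega^\flat}$, so the $+1$-eigenspace is spanned by $\mathbf 1_\Omega+\mathbf 1_{\Omega^\flat}$ (dimension $1$) and the $-1$-eigenspace by $\mathbf 1_\Omega-\mathbf 1_{\Omega^\flat}$ (dimension $1$); summing over the $m_2/2$ pairs gives $m_2/2$ to each count. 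Adding up yields \eqref{e:laforgia1} and \eqref{e:laforgia2}.

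\emph{The main obstacle} I anticipate is purely bookkeeping: being careful that the identification of $\Hom_G(\lambda_\pi^\tau,\lambda_\pi)$ with $\Hom_G(\lambda_{\pi^\tau},\lambda_\pi)$ via \eqref{auroramia} transports the transpose on $\Hom(L(X)',L(X))$ (which is what appears in Definition of $\HomS_G,\HomA_G$, per Proposition \ref{p:3.1}.(2)) to the kernel-flip $\kappa\mapsto\kappa^\flat$ with the correct sign, so that no stray factor or misplaced inverse creeps in. Once the dictionary ``intertwiner $\leftrightarrow$ invariant kernel'', ``transpose $\leftrightarrow$ flip of kernel'', ``orbit decomposition $\leftrightarrow$ basis of kernels'' is set up cleanly, the eigenspace count is immediate.
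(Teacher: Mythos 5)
Your proposal is correct and follows essentially the same route as the paper's proof: identify intertwiners in $\Hom_G(\lambda_\pi^\tau,\lambda_\pi)\cong\Hom_G(\lambda_{\pi^\tau},\lambda_\pi)$ with kernels constant on the $(\pi^\tau\times\pi)$-orbits, observe that transposition corresponds to the flip $\flat$ of the kernel, and count the $\pm 1$-eigenspaces of the induced involution on the orbit-indicator basis. The only difference is that you make explicit the verification that transpose corresponds to kernel-flip under the identification \eqref{duale}, which the paper leaves implicit.
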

\begin{proof}
Denoting, as in \eqref{duale} , by $\varphi \mapsto \varphi'$ the
identification of $L(X)$ and its dual $L(X)'$ and recalling that $\lambda^\tau_\pi\sim \lambda_{\pi^\tau}$  (cf. \eqref{auroramia}), with every $A \in \Hom_G(\lambda_\pi^\tau, \lambda_\pi) \cong \Hom_G(\lambda_{\pi^\tau}, \lambda_\pi)$ we associate
a complex matrix $(a(x_1,x_2))_{x_1,x_2 \in X}$ such that
\[
[A\varphi](x_2) = \sum_{x_1 \in X} a(x_1,x_2)\varphi(x_1)
\]
for all $x_1 \in X$.  Then $A$ is an intertwiner if and only if 
\[
a(\pi^{\tau}(g)x_1,\pi(g)x_2) = a(x_1,x_2)
\]
for all $g \in G$ and $x_1, x_2 \in X$, that is, if and only if  $a(x_1,x_2)$ is constant on the $(\pi^\tau \times \pi)$-orbits of $G$ on
$X \times X$.
Now, if $A$ is symmetric, $a(x_1,x_2)$ must assume the same values on coupled antisymmetric orbits and therefore
\eqref{e:laforgia1} follows.
On the other hand, if $A$ is antisymmetric, $a(x_1,x_2)$ must vanish on all symmetric orbits and assume opposite values
on coupled antisymmetric orbits. Thus \eqref{e:laforgia2} follows as well.
\end{proof}

\begin{theorem}[Mackey-Gelfand criterion]
\label{t:3MF}
Suppose that $\lambda_\pi^\tau \sim \lambda_\pi$. Then the following conditions are equivalent.
\begin{enumerate}[{\rm (a)}]
\item $\dim \HomA_G(\lambda_\pi^\tau, \lambda_\pi) = 0$;
\item every $(\pi^\tau \times \pi)$-orbit of $G$ on $X \times X$ is symmetric;
\item every double coset $\tau(K)sK$ is $\tau$-invariant;
\item $(G,K)$ is a Gelfand pair and $C_\tau(\sigma) = 1$ for every irreducible representation $\sigma$ contained
in $\lambda_\pi$.
\end{enumerate}
\end{theorem}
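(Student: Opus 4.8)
The plan is to prove the equivalence of (a), (b), (c) by a direct orbit count, using the results already assembled, and then to prove (a)$\,\Leftrightarrow\,$(d) via Lemma~\ref{l:11}. For the first chain: by Theorem~\ref{t:2MF} one has $\dim\HomA_G(\lambda_\pi^\tau,\lambda_\pi)=\frac{1}{2}m_2$, where $m_2$ counts the $\tau$-antisymmetric $(\pi^\tau\times\pi)$-orbits of $G$ on $X\times X$; hence (a) holds iff $m_2=0$, iff every such orbit is $\flat$-invariant, which is exactly (b). Next, by the discussion preceding Theorem~\ref{t:2MF} --- a consequence of Lemma~\ref{l:1MF}.(3), in particular of the relation $\Xi_{\tau(S)}(\tau(s))=(\Xi_S(s))^\flat$ --- the number $m_2$ also equals the number of $s\in S$ with $\tau(s)\notin\tau(K)sK$. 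So $m_2=0$ iff $\tau(s)\in\tau(K)sK$ for all $s\in S$, and since $\tau(\tau(K)sK)=\tau(K)\tau(s)K$, this is precisely the assertion that every double coset $\tau(K)sK$ is $\tau$-invariant, i.e.\ (c).

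It remains to prove (a)$\,\Leftrightarrow\,$(d), which is the substantive step. Write $\lambda_\pi=\bigoplus_{j=1}^n\rho_j$ with each $\rho_j$ irreducible. By Lemma~\ref{l:11}, condition (a) holds if and only if for every $j$ one of the following holds: (i) $C_\tau(\rho_j)=1$ and $\rho_j\not\sim\rho_k$ for all $k\neq j$; or (ii) $C_\tau(\rho_j)=0$ and $\rho_j^\tau\not\sim\rho_k$ for all $k\neq j$. I would then use the standing hypothesis $\lambda_\pi^\tau\sim\lambda_\pi$ to exclude (ii): if $C_\tau(\rho_j)=0$ then $\rho_j^\tau\not\sim\rho_j$ by Theorem~\ref{thmCtC}.(1), so under (ii) the irreducible $\rho_j^\tau$ would fail to be equivalent to any irreducible constituent of $\lambda_\pi=\bigoplus_k\rho_k$; but $\rho_j^\tau$ is a constituent of $\lambda_\pi^\tau$, which is equivalent to $\lambda_\pi$ --- a contradiction. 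Hence (a) is equivalent to: alternative (i) holds for every $j$, that is, the $\rho_j$ are pairwise inequivalent (equivalently, $\lambda_\pi$ decomposes multiplicity-free, i.e.\ $(G,K)$ is a Gelfand pair) and $C_\tau(\rho_j)=1$ for every $j$ (equivalently, $C_\tau(\sigma)=1$ for every irreducible $\sigma$ contained in $\lambda_\pi$). This is precisely (d). Conversely, if (d) holds then each $\rho_j$ satisfies alternative (i), so Lemma~\ref{l:11} returns (a).

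The bulk of the argument is bookkeeping with the orbit/double-coset correspondence of Lemma~\ref{l:1MF} and the dimension formulas of Theorem~\ref{t:2MF}. The main obstacle I expect is the exclusion of alternative (ii) of Lemma~\ref{l:11} in the proof of (a)$\,\Rightarrow\,$(d): one has to recognize that a ``complex'' irreducible constituent of $\lambda_\pi$ (one with $C_\tau=0$), occurring with multiplicity one and with its $\tau$-conjugate absent from $\lambda_\pi$, is globally incompatible with the symmetry $\lambda_\pi^\tau\sim\lambda_\pi$. This is the one place where the theorem's hypothesis $\lambda_\pi^\tau\sim\lambda_\pi$ is genuinely needed, and it is the crux of the equivalence with (d).
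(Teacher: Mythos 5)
Your proposal is correct and follows essentially the same route as the paper: (a)$\Leftrightarrow$(b) from Theorem \ref{t:2MF}, (b)$\Leftrightarrow$(c) from Lemma \ref{l:1MF}.(3), and (a)$\Leftrightarrow$(d) from Lemma \ref{l:11} with the hypothesis $\lambda_\pi^\tau\sim\lambda_\pi$ ruling out alternative (ii). Your spelled-out argument for why a constituent with $C_\tau=0$ and absent $\tau$-conjugate contradicts $\lambda_\pi^\tau\sim\lambda_\pi$ is exactly the step the paper leaves implicit.
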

\begin{proof}
The equivalences (a) $\Leftrightarrow$ (b) and (b) $\Leftrightarrow$ (c) immediately follow from Theorem
\ref{t:2MF} and Lemma \ref{l:1MF}, respectively.
Finally, the equivalence (a) $\Leftrightarrow$ (d) follows from Lemma \ref{l:11}: indeed, the hypothesis
$\lambda_\pi^\tau \sim \lambda_\pi$ guarantees that condition (2) therein cannot hold.
\end{proof}

\begin{corollary}\label{stellina}
If $\tau(K) = K$ then  the
conditions (a) - (d)  in Theorem \ref{t:3MF} are all equivalent.
\end{corollary}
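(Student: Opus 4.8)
The plan is essentially to observe that the only hypothesis needed to invoke Theorem \ref{t:3MF}, namely $\lambda_\pi^\tau \sim \lambda_\pi$, is automatically satisfied when $K$ is $\tau$-invariant. This is precisely the content of Corollary \ref{c:auroretta}: when $\tau(K) = K$ one has $\pi \equiv \gamma$, the bijection $\theta$ intertwines $\pi^\tau$ and $\pi$, and consequently the map $T \colon L(X) \to L(X)$, $(Tf)(x) = f(\theta(x))$, furnishes an explicit equivalence $\lambda_\pi \sim \lambda_{\pi^\tau}$; combined with the equivalence $\lambda_\pi^\tau \sim \lambda_{\pi^\tau}$ from Lemma \ref{l:1MF}.(2) (valid for any $\tau$), this gives $\lambda_\pi^\tau \sim \lambda_\pi$.

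Thus I would simply write: since $\tau(K) = K$, Corollary \ref{c:auroretta} yields $\lambda_\pi^\tau \sim \lambda_\pi$, so the standing assumption of Theorem \ref{t:3MF} holds and therefore conditions (a)--(d) are all equivalent. There is no real obstacle here; the corollary is an immediate specialization of the theorem, and the entire point is that the $\tau$-invariance of $K$ removes the need to assume $\lambda_\pi^\tau \sim \lambda_\pi$ separately. One may optionally remark that in this case condition (c) simplifies to the statement that every double coset $KsK$ is $\tau$-invariant, recovering, when $\tau = \tau_{\text{\rm inv}}$, the classical fact that $(G,K)$ is a symmetric Gelfand pair precisely when $g^{-1} \in KgK$ for all $g \in G$.

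\begin{proof}
Since $\tau(K) = K$, Corollary \ref{c:auroretta} gives $\lambda_\pi^\tau \sim \lambda_\pi$. Hence the hypothesis of Theorem \ref{t:3MF} is satisfied, and the conditions (a)--(d) therein are all equivalent.
\end{proof}
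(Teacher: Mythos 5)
Your proof is correct and is essentially identical to the paper's: both invoke Corollary \ref{c:auroretta} to obtain $\lambda_\pi^\tau \sim \lambda_\pi$ from $\tau(K)=K$ and then apply Theorem \ref{t:3MF} directly. Nothing further is needed.
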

\begin{proof}
This follows immediately from Corollary \ref{c:auroretta} and Theorem \ref{t:3MF} since  if $K$ is $\tau$-invariant then $\lambda_\pi^\tau \sim \lambda_\pi$. 
\end{proof}

In particular, if $\tau = \tau_{\text{\rm inv}}$, then one has $\pi^\tau = \pi$
and the following result due to A. Garsia \cite{garsia} (see also \cite[Section 4.8]{book} and \cite[Lemma 2.3]{CST2}) is an immediate consequence.

\begin{corollary}[Symmetric Gelfand pairs: Garsia's criterion]
\label{cor:3MF}
The following conditions are equivalent:
\begin{enumerate}[{\rm (a)}]
\item every $(\pi \times \pi)$-orbit of $G$ on $X \times X$ is symmetric;
\item every double coset $KsK$ is $\tau_{\text{\rm inv}}$-invariant;
\item $(G,K)$ is a Gelfand pair and every irreducible subrepresentation of $\lambda_\pi$ is real.
\end{enumerate}
\end{corollary}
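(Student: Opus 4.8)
The plan is to obtain this statement as the special case $\tau = \tau_{\text{\rm inv}}$ of the Mackey-Gelfand criterion, more precisely of Corollary~\ref{stellina}. The first point to record is that $\tau_{\text{\rm inv}}$ automatically preserves $K$: since $K$ is a subgroup it is closed under inversion, so $\tau_{\text{\rm inv}}(K) = K$. Hence the hypothesis of Corollary~\ref{stellina} is satisfied (in particular $\lambda_\pi^{\tau_{\text{\rm inv}}} \sim \lambda_\pi$ by Corollary~\ref{c:auroretta}), and consequently conditions (a)--(d) of Theorem~\ref{t:3MF} are all equivalent in this setting. It then only remains to translate each of those four conditions into the language of the present corollary.

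Next I would unwind the conditions. Because $\tau_{\text{\rm inv}}(g^{-1}) = g$, the auxiliary action is $\pi^{\tau_{\text{\rm inv}}}(g)x = \pi(\tau_{\text{\rm inv}}(g^{-1}))x = \pi(g)x$, so $\pi^{\tau_{\text{\rm inv}}} = \pi$; therefore condition (b) of Theorem~\ref{t:3MF} — every $(\pi^\tau \times \pi)$-orbit of $G$ on $X \times X$ is symmetric — is literally condition (a) of the corollary. Likewise $\tau_{\text{\rm inv}}(K)sK = KsK$, so condition (c) of Theorem~\ref{t:3MF} becomes condition (b) of the corollary. For condition (c) of the corollary I would use condition (d) of Theorem~\ref{t:3MF}: here $C_{\tau_{\text{\rm inv}}} = C$ by definition, and by Theorem~\ref{thmCtC}.(2) an irreducible $\sigma$ satisfies $C(\sigma) = 1$ if and only if $\sigma$ is real; thus ``$(G,K)$ is a Gelfand pair and $C_\tau(\sigma) = 1$ for every irreducible $\sigma$ contained in $\lambda_\pi$'' is exactly ``$(G,K)$ is a Gelfand pair and every irreducible subrepresentation of $\lambda_\pi$ is real.'' Stringing these identifications together, the equivalences (a) $\Leftrightarrow$ (b) $\Leftrightarrow$ (c) of the corollary follow from (b) $\Leftrightarrow$ (c) $\Leftrightarrow$ (d) of Theorem~\ref{t:3MF}.

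There is no genuine obstacle in this argument, since all the substantive content is already contained in Theorems~\ref{t:2MF}, \ref{t:3MF}, \ref{thmCtC} and Corollaries~\ref{c:auroretta}, \ref{stellina}; the only thing that needs care is the bookkeeping, namely verifying that $\pi^{\tau_{\text{\rm inv}}} = \pi$ and $\tau_{\text{\rm inv}}(K) = K$ so that Corollary~\ref{stellina} applies, and that the combination $C = C_{\tau_{\text{\rm inv}}}$ together with Theorem~\ref{thmCtC}.(2) really does convert the indicator condition into the reality of the constituents. If one preferred an independent route, one could instead re-run the proof of Theorem~\ref{t:2MF} with $\pi^\tau$ replaced by $\pi$ and invoke the classical Frobenius-Schur theorem \eqref{eq:Frobenius-introduction} in place of Theorem~\ref{thmCtC}, but this would merely re-derive the already available machinery, so specialization is the cleaner path.
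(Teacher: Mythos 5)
Your proposal is correct and follows exactly the route the paper intends: the corollary is stated there as an immediate consequence of specializing Theorem \ref{t:3MF} (via Corollary \ref{stellina}, since $\tau_{\text{\rm inv}}(K)=K$ automatically) and observing $\pi^{\tau_{\text{\rm inv}}}=\pi$, with Theorem \ref{thmCtC}.(2) converting $C(\sigma)=1$ into reality of $\sigma$. Your bookkeeping of the four conditions matches the paper's, so there is nothing to add.
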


\begin{corollary}[Weakly symmetric Gelfand pairs]
\label{cor:3MF-bis}
Suppose that  
\begin{equation}\label{quadratino}
g \in K\tau(g)K
\end{equation}
 for all $g \in G$. Then $\tau(K) = K$, $\lambda_\pi^\tau\sim \lambda_\pi$ and the conditions
(a), (b), (c), and  (d) in Theorem \ref{t:3MF} are verified.
\end{corollary}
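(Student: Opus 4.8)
The plan is to derive everything from the hypothesis \eqref{quadratino}, i.e., $g \in K\tau(g)K$ for all $g \in G$, and then invoke the already-established machinery, chiefly Theorem \ref{t:3MF} and its surrounding lemmas. First I would prove $\tau(K) = K$: applying \eqref{quadratino} to an element $k \in K$ gives $k \in K\tau(k)K$, hence $\tau(k) \in K$, so $\tau(K) \subseteq K$; since $\tau$ is an involution, $\tau(K) = K$ follows. Consequently $K$ is $\tau$-invariant, and by the remark preceding Corollary \ref{stellina} (or by Lemma \ref{l:1MF}.(2) together with Corollary \ref{c:auroretta}) we immediately get $\lambda_\pi^\tau \sim \lambda_\pi$, so that the hypothesis of Theorem \ref{t:3MF} is satisfied and all four conditions (a)--(d) there are equivalent.

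It then remains to verify that one of these conditions actually holds; the natural target is condition (c), that every double coset $\tau(K)sK$ is $\tau$-invariant. Since we now know $\tau(K) = K$, condition (c) reads: every double coset $KsK$ satisfies $\tau(KsK) = KsK$. But $\tau(KsK) = \tau(K)\tau(s)\tau(K) = K\tau(s)K$, so I must show $K\tau(s)K = KsK$ for all $s$, equivalently $s \in K\tau(s)K$ for all $s \in G$ --- which is precisely the hypothesis \eqref{quadratino}. Thus condition (c) holds verbatim, and by Theorem \ref{t:3MF} conditions (a), (b), (c), (d) are all verified. This completes the proof.

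The argument is essentially a bookkeeping exercise once \eqref{quadratino} is in hand: there is no real obstacle, only the need to be careful that the equivalences in Theorem \ref{t:3MF} require the standing hypothesis $\lambda_\pi^\tau \sim \lambda_\pi$, which is why establishing $\tau(K) = K$ first is essential. If one wanted to avoid citing Theorem \ref{t:3MF} directly one could instead argue via Lemma \ref{l:1MF}.(3): the hypothesis \eqref{quadratino} says $\tau(s) \in \tau(K)sK$ for every $s \in S$, so by the last sentence of that lemma every $(\pi^\tau\times\pi)$-orbit on $X\times X$ is $\flat$-invariant, i.e.\ $m_2 = 0$, whence by Theorem \ref{t:2MF} $\dim\HomA_G(\lambda_\pi^\tau,\lambda_\pi) = 0$, which is condition (a); but routing through Theorem \ref{t:3MF} is cleaner and is the approach I would present.
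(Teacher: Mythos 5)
Your proof is correct and follows essentially the same route as the paper: deduce $\tau(K)=K$ by applying \eqref{quadratino} to elements of $K$, obtain $\lambda_\pi^\tau\sim\lambda_\pi$ from Corollary \ref{c:auroretta}, and observe that \eqref{quadratino} is exactly condition (c) of Theorem \ref{t:3MF}. The extra alternative you sketch via Lemma \ref{l:1MF}.(3) and Theorem \ref{t:2MF} is also valid but unnecessary.
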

 
\begin{proof} For $k \in K$, \eqref{quadratino} becomes  $k \in K\tau(k)K$,  which implies  $\tau(k) \in K$ and that $K$ is $\tau$-invariant. Then $\lambda_\pi^\tau \sim \lambda_\pi$ by Corollary  \ref{c:auroretta} and \eqref{quadratino} yields (c) in 
Theorem \ref{t:3MF}.
\end{proof}

\section{Simply reducible groups I: Mackey's criterion}

Let $G$ be a finite group. We recall that for   $n \in \NN$ we denote by $\widetilde{G}^n = \{g,g,\ldots,g): g \in G\}$ the  diagonal subgroup  of $G^n = \underbrace{G \times G \times \ldots \times G}_{n \mbox{\ \tiny times}}$.
Let $\tau \colon G \to G$ be an involutive anti-automorphism, as before. We extend it to an involutive anti-automorphism
$\tau_n \colon G^n \to G^n$ in the obvious way, namely by setting $\tau_n(g_1, g_2, \ldots, g_n) = (\tau(g_1),
\tau(g_2), \ldots, \tau(g_n))$ for all $g_1,g_2,\ldots,g_n \in G$. Observe that $\tau_n(\widetilde{G}^n) = \widetilde{G}^n$.

\begin{lemma}
\label{l:1SRGI}
Let $(\sigma_i,V_i)$, $i=1,2$, be $G$-representations and denote by $(\iota_G,\C)$ the trivial representation of $G$.
For $T \colon V_1 \to V_2'$  define $\widetilde{T} \colon V_1 \otimes V_2 \to \C$
by setting $\widetilde{T}(v_1 \otimes v_2) = T(v_1)(v_2)$ for all $v_i \in V_i$, $i=1,2$.
Then the map
\[
\begin{array}{ccc}
\Hom_G(\sigma_1, \sigma_2') & \longrightarrow &  \Hom_G(\sigma_1 \otimes \sigma_2, \iota_G)\\
T & \mapsto & \widetilde{T}
\end{array}
\]
is a linear isomorphism. In particular,
\[
\dim \Hom_G(\sigma_1, \sigma_2') = \dim \Hom_G(\sigma_1 \otimes \sigma_2, \iota_G)
\]
so that, if  $\sigma_2$ is irreducible, the multiplicity of $\sigma_2'$ in $\sigma_1$ equals the
multiplicity of $\iota_G$ in $\sigma_1 \otimes \sigma_2$.
\end{lemma}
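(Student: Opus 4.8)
The plan is to establish the isomorphism in two stages: first exhibit $T\mapsto\widetilde T$ as a plain linear isomorphism ignoring the group actions, and then check that $G$-equivariance is preserved in both directions; the multiplicity statement is then a formal consequence of Schur's lemma.

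\emph{Stage one: the underlying linear isomorphism.} I would first note that the map $T\mapsto\widetilde T$ is nothing but the classical tensor--hom adjunction. For $T\in\Hom(V_1,V_2')$ the assignment $(v_1,v_2)\mapsto [T(v_1)](v_2)$ is bilinear (linear in $v_1$ since $T$ is, linear in $v_2$ since $T(v_1)\in V_2'$), so by the universal property of the tensor product it factors uniquely through a linear map $\widetilde T\colon V_1\otimes V_2\to\C$ with $\widetilde T(v_1\otimes v_2)=[T(v_1)](v_2)$; this shows $\widetilde T$ is well defined and that $T\mapsto\widetilde T$ is linear. Conversely, given $\Phi\in\Hom(V_1\otimes V_2,\C)$, the map $v_1\mapsto\big[v_2\mapsto\Phi(v_1\otimes v_2)\big]$ lies in $\Hom(V_1,V_2')$ and is clearly a two-sided inverse of $T\mapsto\widetilde T$. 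Hence $T\mapsto\widetilde T$ is a linear isomorphism $\Hom(V_1,V_2')\xrightarrow{\ \sim\ }\Hom(V_1\otimes V_2,\C)$; alternatively, injectivity together with $\dim\Hom(V_1,V_2')=\dim V_1\cdot\dim V_2=\dim\Hom(V_1\otimes V_2,\C)$ already forces bijectivity.

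\emph{Stage two: compatibility with the group actions.} Here I would show that, under the identification above, $T\in\Hom_G(\sigma_1,\sigma_2')$ if and only if $\widetilde T\in\Hom_G(\sigma_1\otimes\sigma_2,\iota_G)$. Unwinding the definitions, $T$ intertwines $\sigma_1$ and $\sigma_2'$ precisely when $T(\sigma_1(g)v_1)=\sigma_2'(g)(Tv_1)$ for all $g\in G$ and $v_1\in V_1$; evaluating both sides at an arbitrary $v_2\in V_2$ and using \eqref{defconjrep}, i.e. $[\sigma_2'(g)v_2'](v_2)=v_2'(\sigma_2(g^{-1})v_2)$, this says $[T(\sigma_1(g)v_1)](v_2)=[Tv_1](\sigma_2(g^{-1})v_2)$. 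Since $\sigma_2(g)$ is invertible, this holds for all $v_2$ iff it holds after the substitution $v_2\mapsto\sigma_2(g)v_2$, that is, iff $[T(\sigma_1(g)v_1)](\sigma_2(g)v_2)=[Tv_1](v_2)$ for all $g,v_1,v_2$. The left-hand side is $\widetilde T\big((\sigma_1\otimes\sigma_2)(g)(v_1\otimes v_2)\big)$ and the right-hand side is $\widetilde T(v_1\otimes v_2)$; since simple tensors span $V_1\otimes V_2$ and $(\sigma_1\otimes\sigma_2)(g)$ permutes them among themselves, this is exactly the condition $\widetilde T\circ(\sigma_1\otimes\sigma_2)(g)=\widetilde T$ for all $g$, i.e. $\widetilde T\in\Hom_G(\sigma_1\otimes\sigma_2,\iota_G)$ (recall $\iota_G(g)=1$). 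Combining with Stage one yields the claimed linear isomorphism $\Hom_G(\sigma_1,\sigma_2')\cong\Hom_G(\sigma_1\otimes\sigma_2,\iota_G)$, whence the dimension identity.

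\emph{The ``in particular'' clause, and the main obstacle.} For the last assertion I would invoke Schur's lemma together with Proposition \ref{p:3.1}(1) with $\tau=\tau_{\text{\rm inv}}$ (so that $\sigma_2'=\sigma_2^{\tau_{\text{\rm inv}}}$ is irreducible whenever $\sigma_2$ is): for any irreducible $\theta$ and any $G$-representation $\sigma$ one has $\dim\Hom_G(\sigma,\theta)=\dim\Hom_G(\theta,\sigma)=$ (multiplicity of $\theta$ in $\sigma$). Applying this to $\theta=\sigma_2'$ and $\sigma=\sigma_1$ on one side, and to $\theta=\iota_G$ and $\sigma=\sigma_1\otimes\sigma_2$ on the other, converts the dimension identity into the stated equality of multiplicities. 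I do not anticipate a genuine obstacle; the only delicate point is keeping the $g$ versus $g^{-1}$ bookkeeping straight in Stage two, which is precisely why I would route the computation through the explicit formula \eqref{defconjrep} and the substitution $v_2\mapsto\sigma_2(g)v_2$ rather than argue abstractly.
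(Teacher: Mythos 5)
Your proof is correct and complete; the paper itself only says ``We leave the simple proof to the reader,'' and your two-stage argument (tensor--hom adjunction as a linear isomorphism, then the equivariance check via \eqref{defconjrep} and the substitution $v_2\mapsto\sigma_2(g)v_2$, followed by Schur's lemma for the multiplicity statement) is precisely the standard argument being left implicit. No gaps.
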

\begin{proof}
We leave the simple proof to the reader.
\end{proof}

\begin{definition}[Mackey-Wigner]
\label{d:MW}
{\rm One says that $G$ is \emph{$\tau$-simply reducible} provided the following two conditions are satisfied:
\begin{enumerate}[{\rm (i)}]
\item $\rho_1 \otimes \rho_2$ is multiplicity-free for all $\rho_1, \rho_2 \in \widehat{G}$; 
\item $\rho^\tau \sim \rho$ for all $\rho \in \widehat{G}$.
\end{enumerate}
When $\tau = \tau_{\inv}$ condition (ii) becomes
\begin{enumerate}
\item [{\rm (ii')}] $\rho'\sim \rho$ for all $\rho \in \widehat{G}$
\end{enumerate}
and, provided that condition (i) and (ii') are both satisfied, one simply says that $G$ is \emph{simply reducible}.}
\end{definition}

\begin{theorem}[Mackey's criterion for $\tau$-simply reducible groups]
\label{t:4SRG1}
$G$ is $\tau$-simply reducible if and only if every double coset of $\widetilde{G}^3$ in $G^3$ is
$\tau_3$-invariant. In particular, $G$ is simply reducible if and only if every double coset of $\widetilde{G}^3$ in $G^3$ is invariant under the inverse involution $(\tau_{\inv})_3$. 
\end{theorem}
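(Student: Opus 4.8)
The strategy is to reduce $\tau$-simple reducibility to a statement about a permutation representation of the group $G^3$ on the homogeneous space $X = G^3/\widetilde{G}^3$, and then invoke the Mackey-Gelfand criterion (Theorem \ref{t:3MF}) together with Corollary \ref{stellina}. The key observation is that $L(G^3/\widetilde{G}^3)$, as a $G^3$-representation, decomposes as $\bigoplus_{\rho_1,\rho_2,\rho_3 \in \widehat{G}} (\rho_1 \boxtimes \rho_2 \boxtimes \rho_3)^{\oplus m(\rho_1,\rho_2,\rho_3)}$, where $m(\rho_1,\rho_2,\rho_3) = \dim \Hom_G(\rho_1 \otimes \rho_2 \otimes \rho_3, \iota_G)$ is the multiplicity of the trivial representation in the triple Kronecker product — this is the standard identification of the permutation representation on a diagonal quotient, via Frobenius reciprocity. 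By Lemma \ref{l:1SRGI} (applied with $\sigma_1 = \rho_1 \otimes \rho_2$ and $\sigma_2 = \rho_3$), this multiplicity equals $\dim \Hom_G(\rho_1 \otimes \rho_2, \rho_3')$, i.e. the multiplicity of $\rho_3'$ inside $\rho_1 \otimes \rho_2$.

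First I would record that $\widetilde{G}^3$ is $\tau_3$-invariant (stated already in the excerpt), so that the permutation representation $\lambda$ on $X$ satisfies $\lambda^{\tau_3} \sim \lambda$ and Corollary \ref{stellina} applies: conditions (a)--(d) of Theorem \ref{t:3MF} are all equivalent for the pair $(G^3, \widetilde{G}^3)$. Next I would show that condition (d) — namely that $(G^3, \widetilde{G}^3)$ is a Gelfand pair and $C_{\tau_3}(\sigma) = 1$ for every irreducible $\sigma$ appearing in $\lambda$ — is equivalent to $G$ being $\tau$-simply reducible. The Gelfand pair condition says every $m(\rho_1,\rho_2,\rho_3) \le 1$; I would argue this is equivalent to condition (i) of Definition \ref{d:MW} (multiplicity-freeness of all $\rho_1 \otimes \rho_2$): if some $\rho_1 \otimes \rho_2$ contains $\rho_3'$ with multiplicity $\ge 2$ then $m(\rho_1,\rho_2,\rho_3) \ge 2$, and conversely. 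For the Frobenius-Schur condition: an irreducible $\sigma = \rho_1 \boxtimes \rho_2 \boxtimes \rho_3$ occurs in $\lambda$ iff $m(\rho_1,\rho_2,\rho_3) \ge 1$, and by Proposition \ref{p:5} (iterated) $C_{\tau_3}(\rho_1 \boxtimes \rho_2 \boxtimes \rho_3) = C_\tau(\rho_1)C_\tau(\rho_2)C_\tau(\rho_3)$. One checks that, given the Gelfand pair condition, the product equals $1$ for all occurring triples precisely when $C_\tau(\rho) = 1$ for every $\rho \in \widehat{G}$; and — here is the slightly delicate point — one needs that $C_\tau(\rho) = 1$ for all $\rho$ is equivalent to $\rho^\tau \sim \rho$ for all $\rho$, i.e. to condition (ii). The implication ``all $C_\tau = 1$'' $\Rightarrow$ ``all $\rho^\tau \sim \rho$'' is immediate from Theorem \ref{thmCtC}(1); for the converse, if every irreducible is $\tau$-self-conjugate but some $C_\tau(\rho) = -1$, I would use the triple $(\rho, \rho, \rho')$ — or more carefully a triple whose Kronecker product contains the trivial representation — to force a value $-1$ among the occurring $\sigma$'s, contradicting (d); here one uses that $\rho \otimes \rho'$ always contains $\iota_G$, together with $C_\tau(\iota_G) = 1$ and $C_\tau(\rho') = C_\tau(\rho^\tau) = C_\tau(\rho)$ via Corollary \ref{ppii} and $\rho' \sim \rho^\tau$ (which holds since $\rho^\tau \sim \rho$ forces $\rho' \sim \rho^\tau$ only after checking characters — actually simpler: $C_\tau(\rho \otimes \rho') \ni$ the summand $C_\tau(\iota_G)=1$ must be forced, so all three factors being among occurring irreducibles with product $1$ pins down the sign).

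Then, having shown $G$ is $\tau$-simply reducible $\iff$ condition (d) for $(G^3,\widetilde{G}^3)$ $\iff$ condition (c), I would translate (c). Condition (c) of Theorem \ref{t:3MF} says every double coset $\tau_3(\widetilde{G}^3)\, s\, \widetilde{G}^3 = \widetilde{G}^3 s \widetilde{G}^3$ (using $\tau_3$-invariance of the diagonal) is $\tau_3$-invariant — which is exactly the asserted condition. The final sentence about $\tau = \tau_{\inv}$ is then the special case $\tau_3 = (\tau_{\inv})_3$, noting $(\tau_{\inv})_3(g_1,g_2,g_3) = (g_1^{-1},g_2^{-1},g_3^{-1})$ and that condition (ii) reduces to (ii') as observed in Definition \ref{d:MW}.

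**Main obstacle.** The routine part is the decomposition of $\lambda$ and the bookkeeping with Lemma \ref{l:1SRGI}; the genuinely delicate step is the equivalence, \emph{under the Gelfand (multiplicity-free) hypothesis}, between ``$C_\tau(\rho) = 1$ for every $\rho \in \widehat{G}$'' and ``$\rho^\tau \sim \rho$ for every $\rho \in \widehat{G}$'' — i.e. ruling out any quaternionic-type $\tau$-indicator. The point is that one cannot conclude this from a single representation in isolation; one must exhibit, for a hypothetical $\rho$ with $C_\tau(\rho) = -1$, a concrete triple $(\rho_1,\rho_2,\rho_3)$ with $m(\rho_1,\rho_2,\rho_3) \ge 1$ and $C_\tau(\rho_1)C_\tau(\rho_2)C_\tau(\rho_3) = -1$, which requires knowing that the relevant product of indicators is genuinely $-1$ and that the triple really occurs in $\lambda$. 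Using $\rho_1 = \rho$, $\rho_2 = \rho^\tau$ (so $\rho_2 \sim \rho$ and $C_\tau(\rho_2) = -1$ by Corollary \ref{ppii}), $\rho_3 = \iota_G$ works: $\iota_G$ occurs in $\rho \otimes \rho^\tau$ by Lemma \ref{l:1SRGI} since $\rho^{\tau}$ has $\rho' $... — this needs the small check that $(\rho^\tau)' \sim \rho$, equivalently $\rho^\tau \sim \rho'$, which follows from $\rho^\tau \sim \rho$ and $\rho \sim \rho'$? No — one must instead take $\rho_3 = (\rho^\tau)'$ and verify $C_\tau((\rho^\tau)') = C_\tau(\rho^\tau) = -1$ using Lemma \ref{l:7} with $\omega = \tau_{\inv}$ (legitimate since $\tau$ and $\tau_{\inv}$ commute only when $\tau$ is ``$\tau_{\inv}$-compatible'' — so in general one argues directly on characters instead). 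Handling this sign carefully, without a commuting-$\omega$ assumption, is where the proof must be written with care.
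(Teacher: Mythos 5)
Your overall architecture is the paper's: apply the Mackey--Gelfand criterion to $(G^3,\widetilde{G}^3)$ via Corollary \ref{stellina}, identify the multiplicity of $\rho_1\boxtimes\rho_2\boxtimes\rho_3$ in $L(G^3/\widetilde{G}^3)$ with the multiplicity of $\rho_3'$ in $\rho_1\otimes\rho_2$ through Frobenius reciprocity and Lemma \ref{l:1SRGI}, and translate conditions (c) and (d). The treatment of condition (i) of Definition \ref{d:MW} and of condition (c) is correct and matches the paper.

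Your treatment of condition (ii), however, contains a genuine error. You claim that, given the Gelfand-pair condition, ``the product $C_\tau(\rho_1)C_\tau(\rho_2)C_\tau(\rho_3)$ equals $1$ for all occurring triples precisely when $C_\tau(\rho)=1$ for every $\rho\in\widehat{G}$,'' and you then try to bridge from there to ``$\rho^\tau\sim\rho$ for every $\rho$'' by exhibiting a triple with product $-1$. This purported equivalence is false, and it would make the theorem itself false: take $G=Q_8$ and $\tau=\tau_{\inv}$. Then $G$ is simply reducible, so every double coset of $\widetilde{G}^3$ is $(\tau_{\inv})_3$-invariant, yet the two-dimensional irreducible $\rho$ is quaternionic, so $C(\rho)=-1$. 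Condition (d) only constrains the \emph{products} of three indicators over triples with $\rho_3'\preceq\rho_1\otimes\rho_2$, and such a product can equal $1$ with two factors equal to $-1$ (for $Q_8$ the triple $(\rho,\rho,\iota_G)$ gives $(-1)(-1)(1)=1$). The direction that actually breaks in your chain is (i)$+$(ii)$\Rightarrow$(d): knowing only that every indicator is $\pm 1$, you must still show every occurring triple has product $+1$, and no choice of special triples will do this. The missing ingredient is Theorem \ref{t:10}: once $\rho_1\otimes\rho_2=\Res^{G^2}_{\widetilde{G}^2}(\rho_1\boxtimes\rho_2)$ is multiplicity-free, \emph{every} constituent $\rho_3'$ of it satisfies $C_\tau(\rho_3')=C_\tau(\rho_1\boxtimes\rho_2)=C_\tau(\rho_1)C_\tau(\rho_2)$, so the triple product collapses to $C_\tau(\rho_3')C_\tau(\rho_3)=C_\tau(\rho_3)^2$, which equals $1$ exactly when $\rho_3^\tau\sim\rho_3$ (Theorem \ref{thmCtC}) --- with no constraint on the sign of any individual indicator. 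Here one uses $C_\tau(\rho_3')=C_\tau(\rho_3)$ from Lemma \ref{l:7} with $\omega=\tau_{\inv}$; your closing worry about whether $\tau$ and $\tau_{\inv}$ commute is unfounded, since $\tau(g^{-1})=\tau(g)^{-1}$ holds for every anti-automorphism, so they always commute. As written, your argument cannot close the implication (i)$+$(ii)$\Rightarrow$(d), and the equivalence you propose in its place is contradicted by $Q_8$.
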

\begin{proof}
We use the Mackey-Gelfand criterion (Theorem \ref{t:3MF}) with $G^3$ (resp. $\widetilde{G}^3$)
in place of $G$ (resp. $K$).  Actually, we may apply Corollary \ref{stellina} because $\tau_3(\widetilde{G}^3) = \widetilde{G}^3$.
We now show that the present theorem is a particular case of   the equivalence between (c) and (d) in the Mackey-Gelfand criterion.
First of all, observe that condition (c) of Theorem \ref{t:3MF}, in the present setting, reads that every double coset of $\widetilde{G}^3$ in $G^3$ is $\tau_3$-invariant. Similarly, denoting by $\iota_{\widetilde{G}^3}$ the trivial representation of $\widetilde{G}^3$, the first part of the equivalent condition (d) of the same theorem, reads that $\Ind_{\widetilde{G}^3}^{G^3}(\iota_{\widetilde{G}^3})$, which (cf. \cite[Example 1.6.4]{book2}) coincides with the
permutation representation $\lambda$ of $G^3$ on $L(G^3/\widetilde{G}^3)$, is multiplicity free.
Let then $\rho_1, \rho_2, \rho_3 \in \widehat{G}$. Consider the representation $\rho_1 \boxtimes \rho_2 \boxtimes \rho_3 \in \widehat{G^3}$. By virtue of Frobenius' reciprocity (cf. \cite[Theorem 1.6.11]{book2}) the multiplicity of
$\rho_1 \boxtimes \rho_2 \boxtimes \rho_3$ in $\Ind_{\widetilde{G}^3}^{G^3}(\iota_{\widetilde{G}^3})$ equals the multiplicity of $\iota_{\widetilde{G}^3}$ in $\rho_1 \otimes \rho_2 \otimes \rho_3 = \Res^{G^3}_{\widetilde{G}^3}(\rho_1 \boxtimes \rho_2 \boxtimes \rho_3)$. 
By virtue of Lemma \ref{l:1SRGI} (with $\sigma_1$ (resp. $\sigma_2$) now replaced by $\rho_1 \otimes \rho_2$ (resp. $\rho_3$)) the latter equals the multiplicity of $\rho_3'$ in $\rho_1 \otimes \rho_2$. Therefore $\Ind_{\widetilde{G}^3}^{G^3}(\iota_{\widetilde{G}^3})$ is multiplicity free if and only if $\rho_1 \otimes \rho_2$
is multiplicity free for all $\rho_1, \rho_2 \in \widehat{G}$. This shows that condition (i) in Definition
\ref{d:MW} is equivalent to the first part of  (d).

Since $C_\tau(\rho_1 \boxtimes \rho_2 \boxtimes \rho_3) = C_\tau(\rho_1) C_\tau(\rho_2) C_\tau(\rho_3)$ (cf. Proposition
\ref{p:5}), the second part of condition (d) in Theorem \ref{t:3MF} holds if and only if $C_\tau(\rho_1) C_\tau(\rho_2) C_\tau(\rho_3) =1$ whenever $\rho_3'$ is contained in $\rho_1 \otimes \rho_2$ (in particular, by Theorem \ref{thmCtC}.(1), we also have $\rho \sim \rho^\tau$ for all $\rho \in \widehat{G}$).

Now, if $\rho_1 \otimes \rho_2 = \Res^{G^2}_{\widetilde{G}^2} (\rho_1 \boxtimes \rho_2)$ is multiplicity free, then by virtue of Theorem \ref{t:10} we
have that $C_\tau(\rho_1) C_\tau(\rho_2) = C_\tau(\rho_1 \boxtimes \rho_2)$ equals $C_\tau(\rho_3')$ whenever $\rho_3'$
is contained in $\rho_1 \otimes \rho_2$. Since by Lemma \ref{l:7} $C_\tau(\rho_3) = C_\tau(\rho_3')$, we deduce that
the condition
\[
C_\tau(\rho_1 \boxtimes \rho_2 \boxtimes \rho_3) = 1  \mbox{ whenever } \rho_3' \preceq \rho_1 \otimes \rho_2
\]
is equivalent to
\[
C_\tau(\rho_3)^2 = 1 \mbox{ whenever } \rho_3 \preceq \rho_1 \otimes \rho_2.
\]
Now, $C_\tau(\rho_3)^2 = 1$ if and only if $C_\tau(\rho_3) = \pm 1$ which in turn is equivalent to the condition
$\rho_3^\tau \sim \rho_3$, by virtue of Theorem \ref{thmCtC}.(1). Since the latter is nothing but condition (ii) in Definition \ref{d:MW}, this ends the proof.
\end{proof}

\section{Simply reducible groups II: Mackey's generalizations of Wigner's criterion}
\label{s:SRGII}

Let $G$ be a finite group and $\pi \colon G \to \Sym(X)$ a (not necessarily transitive) action of $G$ on a finite set $X$. As usual, we denote by ${\mathcal O}^\pi_G(X)$ the set of all $G$-orbits of $X$. Let $\alpha \in \Sym(X)$ and
suppose that ${\mathcal O}^\pi_G(X)$ is $\alpha$-invariant, that is, $\alpha(\Omega) \in {\mathcal O}^\pi_G(X)$ for all
$\Omega \in {\mathcal O}^\pi_G(X)$. Note that this condition is always satisfied whenever $\alpha$ commutes with
$\pi$, namely, $\alpha \pi(g) = \pi(g)\alpha$ for all $g \in G$. Indeed, in this case, denoting by $\Omega_x = \{\pi(g)x: g \in G\} \in {\mathcal O}^\pi_G(X)$ the orbit of a point $x \in X$, we have $\alpha(\Omega_x) = \Omega_{\alpha x}$. We denote by ${\mathcal O}^\pi_G(X)^\alpha = \{\Omega \in {\mathcal O}^\pi_G(X): \alpha(\Omega) =
\Omega\}$ the set of orbits which are (globally) fixed by $\alpha$.

The following  generalization of the classical Cauchy-Frobenius-Burnside lemma (cf. \cite[Lemma 3.11.1]{book}) is due to Mackey.

\begin{lemma}\label{l:5SR2}
Setting  $p(g) = |\{x \in X: \pi(g)x= \alpha x\}|$ for all $g\in G$, we have 
\[
\frac{1}{|G|}\sum_{g\in G}p(g) = |{\mathcal O}^\pi_G(X)^\alpha|.
\]
\end{lemma}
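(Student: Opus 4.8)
The plan is to mimic the classical proof of the Cauchy--Frobenius--Burnside lemma by double counting the cardinality of the set
\[
Z = \{(g,x) \in G \times X : \pi(g)x = \alpha x\},
\]
and then to identify the count that runs over $x \in X$ with $|G|$ times the number of $\alpha$-fixed orbits. Summing first over $g$ gives immediately $|Z| = \sum_{g \in G} p(g)$, so the whole content is in evaluating $|Z|$ by summing first over $x$.

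First I would fix $x \in X$ and analyze the fibre $Z_x = \{g \in G : \pi(g)x = \alpha x\}$. If $\alpha x$ does \emph{not} lie in the orbit $\Omega_x$ of $x$, then $Z_x = \emptyset$. If $\alpha x \in \Omega_x$, then picking any $g_0$ with $\pi(g_0)x = \alpha x$ we see that $Z_x = g_0 \Stab_G^\pi(x)$, a coset of the stabilizer, so $|Z_x| = |\Stab_G^\pi(x)| = |G|/|\Omega_x|$. Therefore
\[
|Z| = \sum_{\substack{x \in X \\ \alpha x \in \Omega_x}} \frac{|G|}{|\Omega_x|}.
\]
Next I would reorganize this sum by grouping the points $x$ according to their orbit $\Omega \in {\mathcal O}^\pi_G(X)$. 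The key observation is that, since ${\mathcal O}^\pi_G(X)$ is $\alpha$-invariant, for a given orbit $\Omega$ either $\alpha(\Omega) = \Omega$ — in which case $\alpha x \in \Omega = \Omega_x$ for \emph{every} $x \in \Omega$ — or $\alpha(\Omega)$ is a different orbit, in which case $\alpha x \notin \Omega_x$ for every $x \in \Omega$. (This dichotomy is where the hypothesis on ${\mathcal O}^\pi_G(X)^\alpha$ is really used, and it replaces the observation, in the classical lemma with $\alpha = \id$, that every point is fixed within its own orbit.) Hence only the orbits $\Omega \in {\mathcal O}^\pi_G(X)^\alpha$ contribute, and each contributes $\sum_{x \in \Omega} |G|/|\Omega_x| = |\Omega| \cdot |G|/|\Omega| = |G|$. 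This yields $|Z| = |G| \cdot |{\mathcal O}^\pi_G(X)^\alpha|$, and dividing by $|G|$ finishes the proof.

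The main (very mild) obstacle is simply making precise the dichotomy in the previous paragraph: one must check that if $\alpha(\Omega) = \Omega$ then indeed $\alpha x \in \Omega_x$ for all $x \in \Omega$, and conversely that if $\alpha x \in \Omega_x$ for even a single $x \in \Omega$ then $\alpha(\Omega) = \Omega$. Both are immediate from the fact that $\Omega_x = \Omega$ for $x \in \Omega$ and that $\alpha(\Omega)$ is itself an orbit (by hypothesis), so that $\alpha(\Omega) \cap \Omega \neq \emptyset$ forces $\alpha(\Omega) = \Omega$. No deeper input is needed; everything else is the standard orbit--stabilizer bookkeeping, exactly as in \cite[Lemma 3.11.1]{book}, with $\id$ replaced by $\alpha$.
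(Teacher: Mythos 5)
Your proposal is correct and follows essentially the same route as the paper: both double-count the set $\{(g,x):\pi(g)x=\alpha x\}$, compute the fibre over $x$ via orbit--stabilizer (it is empty unless $\alpha x\in\Omega_x$, and a coset of $\Stab^\pi_G(x)$ otherwise), and use the $\alpha$-invariance of ${\mathcal O}^\pi_G(X)$ to reduce the condition $\alpha x\in\Omega_x$ to $\Omega_x\in{\mathcal O}^\pi_G(X)^\alpha$. Your explicit spelling-out of the dichotomy ($\alpha(\Omega)\cap\Omega\neq\emptyset$ forces $\alpha(\Omega)=\Omega$) is exactly the point the paper also makes, just stated in the contrapositive.
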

\begin{proof}
Let $x\in X$  and set $q(x)=|\{g\in G:\pi(g)x= \alpha x\}|$. Note that if $\Omega_x\not\in {\mathcal O}^\pi_G(X)^\alpha$,
then $q(x) = 0$. Indeed, since $\alpha(\Omega_x)\cap \Omega_x = \emptyset$, there is no $g\in G$ such that $\pi(g) x\in \Omega_x$ equals $\alpha x\in \alpha(\Omega_x)$.  On the other hand, suppose that $\Omega_x \in 
{\mathcal O}^\pi_G(X)^\alpha$. If $g \in G$ satisfies $\pi(g)x= \alpha x$, then $gk$ satisfies the same condition for all $k \in  \Stab^\pi_G(x)$;  also if $g_1,g_2\in G$ satisfy $\pi(g_1) x = \alpha x= \pi(g_2)x$ we deduce  that $g_1^{-1}g_2\in \Stab^\pi_G(x)$.
This shows that $q(x) = |\Stab^\pi_G(x)| = \frac{|G|}{|\Omega_x|}$.
We then have 
\[
\begin{split}
\sum_{g\in G}p(g) & = |\{(x,g)\in X\times G: \pi(g) = \alpha x\}|\\
& = \sum_{x\in X}q(x)\\
& = \sum_{\Omega \in  {\mathcal O}^\pi_G(X)^\alpha}\sum_{x\in \Omega}\frac{|G|}{|\Omega|}\\
& = |G|\cdot |{\mathcal O}^\pi_G(X)^\alpha|.
\end{split}
\]
\end{proof}

The classical Cauchy-Frobenius-Burnside lemma corresponds to the case when $\alpha$ is the identity map (so that, in this case, $p(g)$, $g \in G$, equals the number of fixed points of $\pi(g)$).
\par
Let $n \geq 1$ and denote by $\pi_{n+1} \colon G^{n+1} \to \Sym(G^n)$ the action defined by
\[
\pi_{n+1}(g_1,g_2, \ldots, g_n,g_{n+1})(h_1,h_2, \ldots, h_n) = (g_1 h_1 g_{n+1}^{-1}, g_2 h_2 g_{n+1}^{-1}, \ldots, g_n h_n g_{n+1}^{-1})
\]
for all $g_1,g_2, \ldots, g_n,g_{n+1},h_1,h_2, \ldots, h_n \in G$.
Note that $\pi_{n+1}$ is transitive and that the stabilizer of $(1_G,1_G, \ldots, 1_G) \in G^n$ is given by
\[
\Stab^{\pi_{n+1}}_{G^{n+1}}(1_G,1_G, \ldots, 1_G) = \widetilde{G}^{n+1}.
\]
It follows that 
\begin{equation}
\label{e:star}
G^{n+1}/\widetilde{G}^{n+1} \cong G^n
\end{equation}
as homogeneous spaces.
We also denote by $\gamma_n \colon G \to \Sym(G^n)$ the conjugacy action of $G$ on $G^n$ given by
\[
\gamma_n(g)(g_1,g_2, \ldots, g_n) = (gg_1g^{-1},gg_2g^{-1}, \ldots, gg_ng^{-1})
\]
for all $g,g_1,g_2, \ldots, g_n \in G$.
Denoting by $\varepsilon_{n+1} \colon G \to \widetilde{G}^{n+1}$ the natural bijection given by $\varepsilon_{n+1}(g) = (g,g,\ldots,g)$, we have 
\[
\gamma_n = \pi_{n+1}\vert_{\widetilde{G}^{n+1}} \circ\varepsilon_{n+1}.
\]
In other words, $\gamma_n$ coincides with the action of $\widetilde{G}^{n+1}$ on $G^n$ and therefore (see Remark \ref{remup}) the map 
\begin{equation}
\label{e:filppo-triste}
\begin{array}{ccc}
\widetilde{G}^{n+1} \backslash G^{n+1} / \widetilde{G}^{n+1} & \longrightarrow & {\mathcal O}_G^{\gamma_n}(G^n)\\
\widetilde{G}^{n+1} (g_1,g_2, \ldots, g_n,g_{n+1}) \widetilde{G}^{n+1} & \mapsto & \Omega_{(g_1g_{n+1}^{-1},g_2g_{n+1}^{-1}, \ldots, g_ng_{n+1}^{-1})}
\end{array}
\end{equation}
is well defined and bijective. Indeed,  the orbit corresponding to the double coset of $(g_1,g_2, \ldots, g_n,g_{n+1}) \in  G^{n+1}$ contains the element $\pi_{n+1}(g_1,g_2,\ldots, g_{n+1})(1_G,1_G, \ldots, 1_G) = (g_1g_{n+1}^{-1}, g_2g_{n+1}^{-1}, \ldots, g_ng_{n+1}^{-1})$.
\par

For every $g \in G$, we now denote by $v(g)$ the cardinality of the centralizer of $g$ in $G$, that is, the number of elements $h \in G$ such that $hg = gh$. We then have

\begin{theorem}
\label{t:6SR2}
Let $n \geq 1$. The following quantities are all equal:
\begin{enumerate}[{\rm (a)}]
\item $\frac{1}{|G|} \sum_{g \in G} v(g)^n$
\item the number of double $\widetilde{G}^{n+1}$ cosets in $G^{n+1}$
\item the number of $G$-orbits on $G^n$ with respect to the action $\gamma_n$.
\end{enumerate}
\end{theorem}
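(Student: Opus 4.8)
The plan is to prove the two equalities (b) $=$ (c) and (a) $=$ (c) separately; chaining them gives the theorem. The equality of (b) and (c) is essentially already in hand: the map \eqref{e:filppo-triste} has just been shown to be a well-defined bijection between the set $\widetilde{G}^{n+1}\backslash G^{n+1}/\widetilde{G}^{n+1}$ of double cosets and the set ${\mathcal O}_G^{\gamma_n}(G^n)$ of $\gamma_n$-orbits on $G^n$ (using the identification $G^{n+1}/\widetilde{G}^{n+1}\cong G^n$ of \eqref{e:star} and the fact that $\gamma_n$ is the action of $\widetilde{G}^{n+1}$ on $G^n$), so these two finite sets have the same cardinality. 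No further work is needed for this part beyond correctly invoking \eqref{e:filppo-triste} rather than re-deriving it.

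To see that (c) equals (a), I would apply the classical Cauchy-Frobenius-Burnside lemma, that is, Lemma \ref{l:5SR2} in the special case $\alpha = \id$, with $X = G^n$ and $\pi = \gamma_n$. In this case $p(g)$ is simply the number of fixed points of $\gamma_n(g)$ on $G^n$, and the lemma gives
\[
|{\mathcal O}_G^{\gamma_n}(G^n)| = \frac{1}{|G|}\sum_{g\in G} p(g).
\]
It then remains only to identify $p(g)$ with $v(g)^n$. The condition $\gamma_n(g)(g_1,g_2,\ldots,g_n) = (g_1,g_2,\ldots,g_n)$ means $gg_ig^{-1} = g_i$ for each $i = 1,2,\ldots,n$, i.e. each coordinate $g_i$ lies in the centralizer of $g$ in $G$; since the $n$ coordinates are unconstrained by one another, the number of such tuples is exactly $v(g)^n$. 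Substituting this into the displayed identity yields (c) $=$ (a), and combining with (b) $=$ (c) completes the proof.

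There is no serious obstacle here: the whole content is the observation that $\gamma_n$ is the coordinatewise conjugation action, so that its fixed-point sets are Cartesian powers of centralizers, together with the bijection \eqref{e:filppo-triste} already established in this section. The only points requiring a line of care are invoking \eqref{e:filppo-triste} correctly for (b) $=$ (c), and noting that the hypothesis "${\mathcal O}^\pi_G(X)$ is $\alpha$-invariant" of Lemma \ref{l:5SR2} is trivially satisfied when $\alpha = \id$, so the lemma applies.
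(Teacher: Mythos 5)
Your proposal is correct and follows essentially the same route as the paper: the equality (b) $=$ (c) is read off from the bijection \eqref{e:filppo-triste}, and (a) $=$ (c) comes from Lemma \ref{l:5SR2} with $\alpha = \Id$ (the classical Cauchy--Frobenius--Burnside count) applied to $X = G^n$ and $\pi = \gamma_n$, identifying the fixed-point set of $\gamma_n(g)$ with the $n$-fold Cartesian power of the centralizer of $g$. No gaps.
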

\begin{proof}
We first observe that the cardinality of the centralizer of $(g,g, \ldots, g)\in \widetilde{G}^n$ in
$G^n$ is $v(g)^n$. Moreover, this is equal to the cardinality of the set of fixed points of $\gamma_n(g)$ in $G^n$.
Then, applying Lemma \ref{l:5SR2} with $X = G^n$, $G = \widetilde{G}^n$, $\pi = \gamma_n$, and $\alpha = \Id_{G^n}$,
we deduce the equality between (a) and (c) (actually, we have used the classical Cauchy-Frobenius-Burnside formula).
Finally, the equality between (b) and (c) directly follows from the bijection \eqref{e:filppo-triste}.
\end{proof}

Let $\tau$ be, as usual, an involutory anti-automorphism of $G$. For every $g \in G$, we denote by $\zeta_\tau(g)$ the
number of elements $h \in G$ such that $\tau(h^{-1})h = g$; in formul\ae
\begin{equation}\label{athos}
\zeta_\tau(g) = |\{h\in G: \tau(h^{-1})h= g\}|
\end{equation}
 Note that if $\tau = \tau_{\inv}$ then, simply writing
$\zeta(g)$ instead of $\zeta_{\tau_{\inv}}(g)$, we have that $\zeta(g) = |\{h \in G: h^2 = g\}|$.
As before, we denote by $\tau_n \colon G^n \to G^n$ its natural extension. Since $\widetilde{G}^n$  is
$\tau_n$- invariant,   the set $\widetilde{G}^n\backslash G^n /\widetilde{G}^n$ of double $\widetilde{G}^n$
cosets in $G^n$ is also $\tau_n$ invariant. Indeed, $\tau_n(\widetilde{G}^n(g_1,g_2, \ldots, g_n)\widetilde{G}^n) =
\widetilde{G}^n(\tau(g_1),\tau(g_2),\ldots, \tau(g_n))\widetilde{G}^n$, for all $g_1,g_2,\ldots, g_n \in G$.
As a consequence, $\tau_n$ induces a permutation of the double $\widetilde{G}^n$ cosets in $G^n$; we shall call
the corresponding fixed points $\tau_n$ -\emph{invariant double cosets}.
Similarly, the set ${\mathcal O}^{\gamma_n}_G(G^n)$ of all $\gamma_n$ orbits of $G \cong \widetilde{G}^n$ on $G^n$
is $\tau_n$ invariant. Therefore, $\tau_n$ induces a permutation of such orbits whose fixed points we shall call
$\tau_n$- \emph{invariant $\gamma_n$-orbits}.

\begin{theorem}
\label{t:7SR2}
Let $n \geq 1$. The following quantities are all equal:
\begin{enumerate}[{\rm (a)}]
\item $\frac{1}{|G|} \sum_{g \in G} \zeta_\tau(g)^{n+1}$
\item the number of $\tau_n$-invariant double $\widetilde{G}^{n+1}$ cosets in $G^{n+1}$
\item the number of $\tau_n$-invariant  $\gamma_n$-orbits on $G^n$.
\end{enumerate}
\end{theorem}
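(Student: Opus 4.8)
The plan is to follow, with the involution carried along, the pattern of the proof of Theorem~\ref{t:6SR2}; the two extra ingredients are Mackey's Lemma~\ref{l:5SR2} and the bijection \eqref{e:filppo-triste}. I would begin with the equality (b)$=$(c). Recall that \eqref{e:filppo-triste} sends the double coset of $(g_1,\dots,g_{n+1})$ to the $\gamma_n$-orbit of $(g_1g_{n+1}^{-1},\dots,g_ng_{n+1}^{-1})$; the point to verify is that it intertwines the involution induced by $\tau_{n+1}$ on double cosets with the one induced by $\tau_n$ on $\gamma_n$-orbits. Indeed, passing through $\tau_{n+1}$ and then \eqref{e:filppo-triste} produces the $\gamma_n$-orbit of $\bigl(\tau(g_1)\tau(g_{n+1})^{-1},\dots,\tau(g_n)\tau(g_{n+1})^{-1}\bigr)$, while passing through \eqref{e:filppo-triste} and then $\tau_n$ produces the $\gamma_n$-orbit of $\bigl(\tau(g_{n+1})^{-1}\tau(g_1),\dots,\tau(g_{n+1})^{-1}\tau(g_n)\bigr)$, and these two orbits coincide because $\gamma_n\bigl(\tau(g_{n+1})^{-1}\bigr)$ sends the first representative to the second. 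Restricting this equivariant bijection to fixed points yields (b)$=$(c).

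For (a)$=$(c) I would apply Lemma~\ref{l:5SR2} with $X=G^n$, with $\widetilde{G}^n\cong G$ acting through $\gamma_n$, and with $\alpha=\tau_n|_{G^n}$ --- this is admissible since, as noted just before the statement, ${\mathcal O}^{\gamma_n}_G(G^n)$ is $\tau_n$-invariant. The function $p$ of the lemma is here $p(g)=\bigl|\{(h_1,\dots,h_n)\in G^n:\ gh_ig^{-1}=\tau(h_i)\text{ for all }i\}\bigr|$. Performing the change of variables $h_i=g^{-1}m_i$ turns the $i$-th equation $gh_ig^{-1}=\tau(h_i)$ into $m_ig^{-1}=\tau(m_i)\tau(g)^{-1}$, that is, $\tau(m_i^{-1})m_i=\tau(g^{-1})g$; hence $p(g)=\zeta_\tau\bigl(\tau(g^{-1})g\bigr)^{\,n}$ by \eqref{athos}, and Lemma~\ref{l:5SR2} gives
\[
\bigl|{\mathcal O}^{\gamma_n}_G(G^n)^{\tau_n}\bigr|=\frac1{|G|}\sum_{g\in G}\zeta_\tau\bigl(\tau(g^{-1})g\bigr)^{\,n}.
\]

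The final step is to re-sum the right-hand side: grouping the $g\in G$ according to the value $c=\tau(g^{-1})g$ and using \eqref{athos} once more (exactly $\zeta_\tau(c)$ elements $g$ give a fixed $c$, and $c$'s outside the image contribute nothing, as $\zeta_\tau(c)=0$ there), one obtains
\[
\sum_{g\in G}\zeta_\tau\bigl(\tau(g^{-1})g\bigr)^{\,n}=\sum_{c\in G}\zeta_\tau(c)\,\zeta_\tau(c)^{\,n}=\sum_{c\in G}\zeta_\tau(c)^{\,n+1},
\]
which is (a)$=$(c) and finishes the proof. The only genuinely delicate point is this bookkeeping: the jump of the exponent from $n$ (the number of ``$\tau$-twisted'' coordinates $h_i$) to $n+1$ is caused exactly by the fact that the fibres of $g\mapsto\tau(g^{-1})g$ have size $\zeta_\tau\bigl(\tau(g^{-1})g\bigr)$ --- which is the direct analogue of the role of $v(g)$ versus $v(g)^n$ in Theorem~\ref{t:6SR2} --- and everything else is a routine unwinding of the definitions once Lemma~\ref{l:5SR2} is available.
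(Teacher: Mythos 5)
Your proposal is correct and follows essentially the same route as the paper: both parts rest on Lemma~\ref{l:5SR2} applied to $\gamma_n$ with $\alpha=\tau_n$, on the bijection \eqref{e:filppo-triste}, and on the same fibre count $\bigl|\{g:\tau(g^{-1})g=c\}\bigr|=\zeta_\tau(c)$ that raises the exponent from $n$ to $n+1$. The only (cosmetic) difference is that you package the (b)$=$(c) step as equivariance of \eqref{e:filppo-triste} under the two involutions, whereas the paper writes out the two invariance conditions and shows each is equivalent to an intermediate one --- the underlying computation, conjugation by $\tau(g_{n+1})^{-1}$, is the same.
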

\begin{proof} We start by proving the equality between (b) and (c). It suffices to show that the bijective correspondence
\eqref{e:filppo-triste} transforms $\tau_n$-invariant double cosets into $\tau_n$-invariant orbits. Now, the
double coset containing the element $(g_1,g_2, \ldots, g_n)$ is $\tau_n$-invariant if and only if
\begin{equation}
\label{e:pioV1}
\exists h_1, h_2 \in G \mbox{ such that } \tau(g_i) = h_1g_ih_2 \ \ \mbox{ for all } i=1,2,\ldots,n+1
\end{equation}
while, the orbit containing $(g_1g_{n+1}^{-1}, g_2g_{n+1}^{-1}, \ldots, g_ng_{n+1}^{-1})$ is $\tau_n$-invariant if and only if
\begin{equation}
\label{e:pioV2}
\exists h_3 \in G \mbox{ such that } \tau(g_{n+1})^{-1}\tau(g_i) = h_3g_ig_{n+1}^{-1}h_3^{-1} \ \ \mbox{ for all } i=1,2,\ldots,n.
\end{equation}
Let us show that conditions \eqref{e:pioV1} and \eqref{e:pioV2} are both equivalent to
\begin{equation}
\label{e:pioV3}
\exists h_4 \in G \mbox{ such that } \tau(g_{n+1})^{-1}\tau(g_i) = h_4g_{n+1}^{-1}g_ih_4^{-1} \ \ \mbox{ for all } i=1,2,\ldots,n.
\end{equation}
Indeed, \eqref{e:pioV1} implies \eqref{e:pioV3} by taking $h_4 = h_2^{-1}$, while if \eqref{e:pioV3} holds we have
\[
\tau(g_1)h_4 g_1^{-1} = \tau(g_2)h_4g_2^{-1} = \cdots = \tau(g_{n+1})h_4 g_{n+1}^{-1}
\]
and therefore \eqref{e:pioV1} holds with $h_1= \tau(g_{n+1})h_4 g_{n+1}^{-1}$ and $h_2 = h_4^{-1}$. Finally the equivalence
between \eqref{e:pioV2} and \eqref{e:pioV3} trivially follows from the identity
\[
h_4g_{n+1}^{-1}g_ih_4^{-1} = (h_4 g_{n+1}^{-1})g_i g_{n+1}^{-1} (h_4 g_{n+1}^{-1})^{-1}
\]
which shows that the relation between $h_3$ and $h_4$ is simply given by $h_3 = h_4g_{n+1}^{-1}$.
This completes the proof of the equality of (b) and (c).

We now turn to prove that (a) equals (c). For $g \in G$ let us set
\[
p_n(g) = |\{(g_1,g_2, \ldots, g_n)\in G^n: \gamma_n(g)(g_1,g_2,\ldots, g_n) = \tau_n(g_1,g_2, \ldots, g_n)\}|
\]
so that, in particular, $p_1(g) = |\{h \in G: ghg^{-1} = \tau(h)\}|$. It is obvious that $p_n(g) = p_1(g)^n$.
Moreover, we have $ghg^{-1} = \tau(h)$ if and only if $\tau(g)^{-1}g = \tau(gh^{-1})^{-1}gh^{-1}$ and the
number of elements $h \in G$ satisfying the latter identity equals the number of elements $u \in G$ such that
$\tau(g)^{-1}g = \tau(u)^{-1}u$ (just take $u = gh^{-1}$). In other words, we have $p_1(g) = \zeta_\tau(\tau(g)^{-1}g)$.
Thus
\[
\begin{split}
\sum_{g \in G} p_n(g) & = \sum_{g \in G} p_1(g)^n\\
& = \sum_{g \in G} \zeta_\tau(\tau(g)^{-1}g)^n\\
& = \sum_{t \in G} \zeta_\tau(t)^n |\{g \in G: \tau(g)^{-1}g = t\}|\\
& = \sum_{t \in G} \zeta_\tau(t)^{n+1}.
\end{split}
\]
Then the equality of (a) and (c) follows from Lemma \ref{l:5SR2} with $X = G^n$, $G = \widetilde{G}^n$, $\pi = \gamma_n$,  $\alpha = \tau_n$  and, obviously, $p = p_n$.
\end{proof}

Recall that a conjugacy class is {\it ambivalent} when it is invariant with respect to $\tau_\inv$. Moreover the group
is {\it ambivalent} when every conjugacy class is ambivalent, equivalently, every element is conjugate to its inverse.
Then with the above notation, when $n=1$ we have:

\begin{corollary}
\label{c:8SR2}
$\frac{1}{|G|} \sum_{g \in G} \zeta_\tau(g)^2$ equals the number of $\tau$-invariant conjugacy classes of $G$. In particular (when $\tau = \tau_{\inv}$), $\frac{1}{|G|} \sum_{g \in G} \zeta(g)^2$ equals the number of ambivalent conjugacy classes of $G$. 
\end{corollary}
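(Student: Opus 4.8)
The plan is to specialize Theorem \ref{t:7SR2} to the case $n = 1$. With $n = 1$, quantity (a) of that theorem is precisely $\frac{1}{|G|}\sum_{g \in G}\zeta_\tau(g)^2$, while quantity (c) is the number of $\tau_1$-invariant $\gamma_1$-orbits on $G^1 = G$. So the only thing that remains to be checked is that these $\tau_1$-invariant $\gamma_1$-orbits coincide with the $\tau$-invariant conjugacy classes of $G$.

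First I would observe that for $n = 1$ the action $\gamma_1 \colon G \to \Sym(G)$ introduced before Theorem \ref{t:6SR2} is nothing but the conjugation action $\gamma_1(g)h = ghg^{-1}$, so its orbits ${\mathcal O}_G^{\gamma_1}(G)$ are exactly the conjugacy classes of $G$; likewise, the extension $\tau_1$ of $\tau$ to $G^1$ is just $\tau$ itself. Hence an orbit $\Omega$ is a $\tau_1$-invariant $\gamma_1$-orbit precisely when $\Omega$ is a conjugacy class with $\tau(\Omega) = \Omega$, i.e.\ a $\tau$-invariant conjugacy class. Combining this identification with the equality of (a) and (c) in Theorem \ref{t:7SR2} yields the first assertion.

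For the ``in particular'' part, I would simply recall that a conjugacy class is \emph{ambivalent} exactly when it is invariant under $\tau_{\inv}$, and that $\zeta = \zeta_{\tau_{\inv}}$ by the remark following \eqref{athos}; substituting $\tau = \tau_{\inv}$ into the statement just proved then gives the count of ambivalent conjugacy classes. There is essentially no obstacle here: the entire content is already contained in Theorem \ref{t:7SR2}, and the corollary is a matter of unwinding the $n = 1$ instance of the notation, the key (trivial) point being the identification of $\gamma_1$ with ordinary conjugation.
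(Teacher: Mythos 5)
Your proposal is correct and is exactly the argument the paper intends: the corollary is stated as the $n=1$ case of Theorem \ref{t:7SR2}, where quantity (a) becomes $\frac{1}{|G|}\sum_{g\in G}\zeta_\tau(g)^2$ and quantity (c) becomes the number of $\tau$-invariant conjugacy classes, since $\gamma_1$ is the conjugation action and $\tau_1=\tau$. The ``in particular'' part is likewise just the substitution $\tau=\tau_{\inv}$ together with the definition of ambivalence, as you note.
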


A more complete formulation of Corollary \ref{c:8SR2} will be given in Theorem \ref{tulliosenzasperanza}.

Also, from Theorem \ref{t:6SR2} and Theorem \ref{t:7SR2} we deduce:

\begin{corollary}
\label{c:9SR2}
We have
\[
\sum_{g \in G}\zeta_\tau(g)^{n+1} \leq \sum_{g \in G} v(g)^n.
\]
Moreover equality holds if and only if every double $\widetilde{G}^{n+1}$ coset in $G^{n+1}$ is
$\tau_n$-invariant (equivalently, if and only if every $\gamma_n$-orbit of $G$ on $G^n$ is
$\tau_n$-invariant).
\end{corollary}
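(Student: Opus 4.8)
The plan is to read off both the inequality and the equality condition directly from Theorems \ref{t:6SR2} and \ref{t:7SR2}, which present the two sums (after division by $|G|$) as the cardinality of a finite set and the cardinality of a distinguished subset of that same set.

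First I would apply Theorem \ref{t:6SR2} to rewrite $\frac{1}{|G|}\sum_{g\in G} v(g)^n$ as the number of double $\widetilde{G}^{n+1}$ cosets in $G^{n+1}$ (equivalently, as $|{\mathcal O}_G^{\gamma_n}(G^n)|$). Then I would apply Theorem \ref{t:7SR2} to rewrite $\frac{1}{|G|}\sum_{g\in G} \zeta_\tau(g)^{n+1}$ as the number of $\tau_n$-invariant double $\widetilde{G}^{n+1}$ cosets in $G^{n+1}$ (equivalently, as the number of $\tau_n$-invariant $\gamma_n$-orbits on $G^n$). Since the $\tau_n$-invariant double cosets form a subset of the set of all double cosets, their number is at most the total number; clearing the factor $|G|$ from the resulting inequality of cardinalities yields
\[
\sum_{g\in G}\zeta_\tau(g)^{n+1} \le \sum_{g\in G} v(g)^n .
\]

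For the equality statement I would use the elementary fact that a subset of a finite set has the full cardinality exactly when it is the whole set. Hence equality holds if and only if every double $\widetilde{G}^{n+1}$ coset in $G^{n+1}$ is $\tau_n$-invariant; and, since the bijection \eqref{e:filppo-triste} was already shown (in the proof of Theorem \ref{t:7SR2}) to match $\tau_n$-invariant double cosets with $\tau_n$-invariant $\gamma_n$-orbits, this is equivalent to every $\gamma_n$-orbit of $G$ on $G^n$ being $\tau_n$-invariant.

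I do not expect a real obstacle here: the statement is a formal corollary of the two counting theorems. The only point needing a moment's care is to make sure that the quantity supplied by Theorem \ref{t:7SR2} is genuinely the cardinality of a subset of the set counted by Theorem \ref{t:6SR2}; this is immediate, since both theorems are counting (all, respectively the $\tau_n$-invariant) double cosets of $\widetilde{G}^{n+1}$ in $G^{n+1}$, so no re-indexing or reconciliation is required.
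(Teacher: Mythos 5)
Your proposal is correct and is exactly the paper's argument: the corollary is stated there as an immediate consequence of Theorems \ref{t:6SR2} and \ref{t:7SR2}, with the inequality coming from the inclusion of the $\tau_n$-invariant double cosets (equivalently, $\tau_n$-invariant $\gamma_n$-orbits) in the set of all of them, and equality precisely when the inclusion is an equality. No further comment is needed.
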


When $n=2$ Theorem \ref{t:4SRG1} yields the following remarkable criterion.

\begin{corollary}[Mackey-Wigner criterion for simple reducibility]
\label{c:10SR2}
The group $G$ is $\tau$-simply reducible if and only if
\begin{equation}
\label{e:orly}
\sum_{g \in G}\zeta_\tau(g)^{3} = \sum_{g \in G} v(g)^2.
\end{equation}
In particular (Wigner's criterion), $G$ is simply reducible if and only if
$\sum_{g \in G}\zeta(g)^{3} = \sum_{g \in G} v(g)^2$. \hfill \qed
\end{corollary}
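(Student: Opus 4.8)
The plan is to read the statement off as an immediate consequence of two facts already proved: Mackey's criterion (Theorem \ref{t:4SRG1}) and the case $n=2$ of Corollary \ref{c:9SR2}. No new idea is needed; the argument is a short chain of equivalences, so I will keep it brief.

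First I would apply Corollary \ref{c:9SR2} with $n=2$: it already provides the inequality $\sum_{g\in G}\zeta_\tau(g)^{3}\le\sum_{g\in G}v(g)^2$ and, crucially, the fact that \eqref{e:orly} is an equality \emph{precisely} when every double coset of $\widetilde{G}^{3}$ in $G^{3}$ is invariant under the componentwise (natural) extension of $\tau$. Then I would invoke Theorem \ref{t:4SRG1}, which asserts that $G$ is $\tau$-simply reducible if and only if every double coset of $\widetilde{G}^{3}$ in $G^{3}$ is $\tau_3$-invariant. Since $\tau_3$ is exactly the componentwise extension of $\tau$ to $G^3$ and $\widetilde{G}^3$ is the diagonal, these two double-coset conditions unwind to the very same assertion --- for every $(g_1,g_2,g_3)\in G^3$ there exist $h_1,h_2\in G$ with $\tau(g_i)=h_1g_ih_2$ for $i=1,2,3$, which is \eqref{e:pioV1} specialized to $n=2$ --- so chaining the equivalences through it gives ``$G$ is $\tau$-simply reducible'' $\Longleftrightarrow$ \eqref{e:orly}. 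For the ``in particular'' clause I would take $\tau=\tau_{\inv}$: then $\zeta_\tau(g)=\zeta(g)=|\{h\in G:h^2=g\}|$, and by Definition \ref{d:MW} ``$\tau_{\inv}$-simply reducible'' coincides with ``simply reducible'' (condition (ii) becomes (ii'): $\rho'\sim\rho$ for all $\rho\in\widehat{G}$), so \eqref{e:orly} becomes Wigner's identity $\sum_{g\in G}\zeta(g)^3=\sum_{g\in G}v(g)^2$.

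I do not expect any substantive obstacle, since all the real work sits in Theorems \ref{t:4SRG1} and \ref{t:7SR2} (hence already in Corollary \ref{c:9SR2}). The only point requiring care --- the ``hard part'', such as it is --- is bookkeeping: one must reconcile the index conventions, matching the $\tau_n$ that acts on $G^{n+1}$ in Corollary \ref{c:9SR2} (with $n=2$) against the $\tau_3$ acting on $G^3$ in Theorem \ref{t:4SRG1}, i.e. noting that under the identification $n+1=3$ both denote componentwise application of $\tau$ to triples, so the two invariance conditions are literally the same. (One could alternatively bypass Theorem \ref{t:4SRG1} and argue directly from the Mackey-Gelfand criterion, Theorem \ref{t:3MF}, applied to the pair $\widetilde{G}^3\leq G^3$ together with Corollary \ref{c:9SR2}, but routing through Theorem \ref{t:4SRG1} is cleaner.)
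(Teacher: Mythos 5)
Your proof is correct and coincides with the paper's own (essentially unwritten) argument: the corollary is stated with an immediate \qed, justified only by the remark that ``when $n=2$ Theorem \ref{t:4SRG1} yields the following criterion,'' which is exactly your chain through Corollary \ref{c:9SR2} at $n=2$ and Mackey's criterion. Your bookkeeping remark about matching the componentwise extension of $\tau$ appearing in Corollary \ref{c:9SR2} (written there as $\tau_n$ acting on $G^{n+1}$) with the $\tau_3$ of Theorem \ref{t:4SRG1} is apt and correctly resolved.
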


We now examine in detail the case $n=1$.

\begin{theorem}
\label{t:11SR2}
The following conditions are equivalent:
\begin{enumerate}[{\rm (a)}]
\item $\sum_{g \in G} \zeta_\tau(g)^{2} = \sum_{g \in G} v(g)$;
\item every conjugacy class of $G$ is $\tau$-invariant;
\item $\rho \sim \rho^\tau$ for every irreducible representation $\rho$ of $G$.
\end{enumerate}
\end{theorem}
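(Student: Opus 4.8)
The plan is to prove the chain of implications (a) $\Rightarrow$ (b) $\Rightarrow$ (c) $\Rightarrow$ (a), drawing on the machinery established in the preceding sections. The equivalence (a) $\Leftrightarrow$ (b) is essentially the case $n=1$ of Corollary \ref{c:9SR2} (equivalently Theorem \ref{t:6SR2} and Theorem \ref{t:7SR2}): indeed $\sum_{g \in G} v(g)$ counts (up to the factor $|G|$) the number of $\gamma_1$-orbits of $G$ on $G$, i.e.\ the number of conjugacy classes, while $\sum_{g \in G}\zeta_\tau(g)^2$ counts the number of $\tau_1 = \tau$-invariant $\gamma_1$-orbits, i.e.\ the number of $\tau$-invariant conjugacy classes (this last statement is exactly Corollary \ref{c:8SR2}). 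Since the number of $\tau$-invariant conjugacy classes never exceeds the total number, equality holds precisely when \emph{every} conjugacy class is $\tau$-invariant. So (a) $\Leftrightarrow$ (b) needs only a sentence or two of bookkeeping.

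For (b) $\Leftrightarrow$ (c) I would argue via character theory. The number of irreducible representations $\rho$ with $\rho \sim \rho^\tau$ equals the number of real-valued-on-$\tau$ characters, and one shows by a standard orthogonality argument that $\rho \sim \rho^\tau$ if and only if the character $\chi_\rho$ is invariant under the permutation of conjugacy classes induced by $\tau$ (note $\chi_{\rho^\tau}(g) = \chi_\rho(\tau(g))$, which is a class function since $\tau$ sends conjugacy classes to conjugacy classes). The matrix expressing the $\tau$-action on irreducible characters (in the basis of irreducible characters) is, by the second orthogonality relation, conjugate to the permutation matrix of the $\tau$-action on conjugacy classes; hence these two permutations have the same number of fixed points. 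Therefore every conjugacy class is $\tau$-invariant if and only if every irreducible character is $\tau$-invariant, i.e.\ $\rho \sim \rho^\tau$ for all $\rho \in \widehat{G}$. Alternatively, and perhaps more in the spirit of this paper, one can derive the count of $\rho$ with $\rho^\tau \sim \rho$ directly: by Theorem \ref{thmCtC}.(1) this is $\sum_{\rho \in \widehat{G}} C_\tau(\rho)^2$, which the forthcoming twisted Frobenius--Schur theorem will identify with an explicit sum; but to keep the argument self-contained at this point I would prefer the orthogonality-relations route.

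The main obstacle is making precise the claim that the $\tau$-permutation of irreducible characters and the $\tau$-permutation of conjugacy classes have equally many fixed points. The cleanest way is: let $P$ be the permutation matrix of $\tau$ acting on conjugacy classes and let $Q$ be the permutation matrix of $\tau$ acting on $\widehat{G}$; writing $T$ for the character table (rows indexed by irreducibles, columns by classes), the relation $\chi_{\rho^\tau}(C) = \chi_\rho(\tau(C))$ reads $Q T = T P$, and since $T$ is invertible we get $Q = T P T^{-1}$, so $\mathrm{tr}\,Q = \mathrm{tr}\,P$; as $P, Q$ are permutation matrices their traces count fixed points. Combined with the fact that the number of $\tau$-invariant classes equals the total number of classes iff every class is $\tau$-invariant (and likewise for irreducibles), this gives (b) $\Leftrightarrow$ (c). Assembling the three equivalences completes the proof.
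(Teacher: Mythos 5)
Your proof is correct. The handling of (a) $\Leftrightarrow$ (b) is exactly the paper's: both invoke Corollary \ref{c:8SR2} together with the fact (Theorem \ref{t:6SR2}) that $\frac{1}{|G|}\sum_{g\in G}v(g)$ counts the conjugacy classes, and then observe that the number of $\tau$-invariant classes equals the total number precisely when all classes are $\tau$-invariant. For (b) $\Leftrightarrow$ (c), however, you take a genuinely different and more elementary route. The paper stays inside its own machinery: it uses the decomposition $\Ind^{G^2}_{\widetilde{G}^2}\iota_{\widetilde{G}^2}\sim\bigoplus_{\rho}\left((\rho')^\tau\boxtimes\rho^\tau\right)$, the multiplicativity $C_\tau((\rho')^\tau\boxtimes\rho^\tau)=C_\tau(\rho)^2$ (Proposition \ref{p:5} and Lemma \ref{l:7}), and then applies the Mackey--Gelfand criterion (Theorem \ref{t:3MF}) to the Gelfand pair $(G^2,\widetilde{G}^2)$, identifying the $\tau$-invariance of all double $\widetilde{G}^2$-cosets with the $\tau$-invariance of all conjugacy classes via Theorem \ref{t:7SR2}. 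You instead use the classical Brauer-type permutation argument: from $\chi_{\rho^\tau}(g)=\chi_\rho(\tau(g))$ and the invertibility of the character table $T$ you get $QT=TP$, hence $\operatorname{tr}Q=\operatorname{tr}P$, so the involution $\rho\mapsto\rho^\tau$ on $\widehat{G}$ and the involution induced by $\tau$ on conjugacy classes have the same number of fixed points. Your route is shorter and self-contained (it needs only Proposition \ref{p:3.1}.(1) to know that $\rho\mapsto\rho^\tau$ permutes $\widehat{G}$, plus standard orthogonality), whereas the paper's route, while heavier, illustrates how the statement sits as an instance of the Mackey--Gelfand criterion, which is the organizing theme of the article. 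Both arguments are complete; there is no gap in yours.
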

\begin{proof}
The equivalence (a) $\Leftrightarrow$ (b) is a particular case of Corollary \ref{c:8SR2} since
$\frac{1}{|G|}\sum_{g \in G} v(g)$ equals the number of conjugacy classes of $G$ (cf. Theorem \ref{t:6SR2}).

Recall that the permutation representation $L(G) = L(G^2/\widetilde{G}^2)$ equals the induced representation 
$\Ind^{G^2}_{\widetilde{G}^2} \iota_{\widetilde{G}^2}$.  Moreover  this representation is multiplicity free (i.e. 
$(G^2, \widetilde{G}^2)$ is a Gelfand pair) and its decomposition into irreducibles is: 
\begin{equation}
\label{penali}
\Ind^{G^2}_{\widetilde{G}^2} \iota_{\widetilde{G}^2} \sim \bigoplus_{\rho \in \widehat{G}} (\rho' \boxtimes \rho);
\end{equation}
see \cite[Section 9.5]{book} and \cite[Corollary 2.16]{book2}.
We now observe that $(\rho')^\tau = (\rho^\tau)'$: indeed $\rho' = \rho^{\tau_{\inv}}$ and since $\tau$ and $\tau_{\inv}$ commute, \eqref{e:tau-omega} holds. Moreover since $\widehat{G} = \{\rho^\tau: \rho \in \widehat{G}\}$, from 
\eqref{penali} we deduce
\begin{equation}
\label{penali2}
\Ind^{G^2}_{\widetilde{G}^2} \iota_{\widetilde{G}^2} \sim \bigoplus_{\rho \in \widehat{G}} \left((\rho')^\tau \boxtimes \rho^\tau \right).
\end{equation}
By virtue of Proposition \ref{p:5} and Lemma \ref{l:7} we have
\[
C_\tau((\rho')^\tau \boxtimes \rho^\tau) = C_\tau((\rho')^\tau) C_\tau(\rho^\tau) = C_\tau(\rho)^2.
\]
Since by Theorem \ref{thmCtC} $\rho \sim \rho^\tau$ if and only if $C_\tau(\rho) = \pm 1$, we deduce that
this holds if and only if $C_\tau((\rho')^\tau \boxtimes \rho^\tau) = 1$.
As a consequence, the equivalence (b) $\Leftrightarrow$ (c) follows from the Mackey-Gelfand criterion
(Theorem \ref{t:3MF}) applied to \eqref{penali2}, also taking into account the equality of the quantities (b) and (c)
in Theorem \ref{t:7SR2}.
\end{proof}

The remaining part of this section is devoted to the analysis of the consequences when equality occurs in Corollary \ref{c:9SR2} for $n\geq 3$. We need two auxiliary lemmas.

\begin{lemma}\label{n0n}
If there exists a positive integer $n_0$ such that $\sum_{g\in G}\zeta_\tau(g)^{n_0+1}=\sum_{g\in G}v(g)^{n_0}$ then we also have $\sum_{g\in G}\zeta_\tau(g)^{n+1}=\sum_{g\in G}v(g)^{n}$ for all $n\leq n_0$.
\end{lemma}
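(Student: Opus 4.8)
The plan is to translate the stated equalities, via Corollary \ref{c:9SR2}, into the purely combinatorial assertion that certain double cosets are invariant, and then to pass from the top level $n_0$ down to all levels $n\le n_0$ by a trivial ``padding'' of tuples.

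First I would record the precise form of Corollary \ref{c:9SR2} that I intend to use: for a positive integer $m$, the equality $\sum_{g\in G}\zeta_\tau(g)^{m+1}=\sum_{g\in G}v(g)^{m}$ holds if and only if every double $\widetilde{G}^{m+1}$-coset in $G^{m+1}$ is invariant under the componentwise extension of $\tau$, and --- by the computation in the proof of Theorem \ref{t:7SR2}, namely condition \eqref{e:pioV1} --- this in turn holds if and only if for every tuple $(g_1,\dots,g_{m+1})\in G^{m+1}$ there exist $h_1,h_2\in G$ with $\tau(g_i)=h_1g_ih_2$ for all $i=1,\dots,m+1$.

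Granting the hypothesis at level $n_0$, I would then fix $n\le n_0$ and an arbitrary tuple $(g_1,\dots,g_{n+1})\in G^{n+1}$, append $n_0-n\ge 0$ copies of $1_G$ to obtain $(g_1,\dots,g_{n+1},1_G,\dots,1_G)\in G^{n_0+1}$, and apply the level-$n_0$ characterization to this longer tuple. It produces $h_1,h_2\in G$ with $\tau(g_i)=h_1g_ih_2$ for all $i=1,\dots,n+1$ (the appended coordinates only add the harmless relation $h_1h_2=1_G$); reading off the first $n+1$ coordinates gives exactly the condition that the double $\widetilde{G}^{n+1}$-coset of $(g_1,\dots,g_{n+1})$ be invariant under $\tau$. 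As the tuple was arbitrary, the level-$n$ characterization is satisfied and the desired equality follows.

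I do not expect a genuine obstacle here; the only things to watch are invoking Corollary \ref{c:9SR2} in the correct direction at both ends and keeping the index bookkeeping straight ($n\le n_0$ forces $n+1\le n_0+1$, so a tuple of length $n+1$ embeds into one of length $n_0+1$). If one prefers to avoid double cosets, the same idea runs on the conjugation action: the projection $G^{n_0}\to G^{n}$ onto the first $n$ coordinates is surjective, $G$-equivariant for the actions $\gamma_\bullet$, and intertwines the componentwise extensions of $\tau$, so it carries the $\gamma_{n_0}$-orbit of $(x_1,\dots,x_n,1_G,\dots,1_G)$ onto the $\gamma_n$-orbit of $(x_1,\dots,x_n)$, thereby transporting ``every orbit is $\tau$-invariant'' from level $n_0$ down to level $n$ (cf. Theorems \ref{t:6SR2} and \ref{t:7SR2}).
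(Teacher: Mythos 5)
Your proposal is correct and follows essentially the same route as the paper: the paper likewise converts the equality at level $n_0$, via Theorems \ref{t:6SR2} and \ref{t:7SR2}, into the statement that every $\gamma_{n_0}$-orbit is $\tau_{n_0}$-invariant (i.e.\ for each tuple there is a single $g$ with $\tau(g_i)=gg_ig^{-1}$), observes that this condition is inherited by shorter tuples, and converts back. Your double-coset phrasing with the padding by $1_G$'s is just the equivalent formulation \eqref{e:pioV1} of the same fact, and your closing remark about the conjugation action is exactly the paper's argument.
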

\begin{proof}
By Theorem  \ref{t:6SR2} and  Theorem \ref{t:7SR2} we have $\sum_{g\in G}\zeta_\tau(g)^{n_0+1}=\sum_{g\in G}v(g)^{n_0}$ if and only if every $\gamma_{n_0}$-orbit on $G^{n_0}$ is $\tau_{n_0}$-invariant. This is equivalent to saying that for each choice of $g_1,g_2,\dotsc, g_{n_0}\in G$ there exists $g\in G$ such that $\tau(g_i)=gg_ig^{-1}$, $i=1,2,\dotsc,n_0$. But this implies the $\tau_n$-invariance of the $\gamma_n$-orbits also for all $n\leq n_0$ and
another application of the two above mentioned theorems completes the proof.
\end{proof}

\begin{lemma}\label{lemmabanale}
Let $\sigma$ be a representation. Suppose that $\sigma\otimes\sigma'$ contains the trivial representation exactly once. Then $\sigma$ is irreducible. 
\end{lemma}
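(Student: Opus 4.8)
The plan is to decompose $\sigma$ into irreducibles and count the multiplicity of the trivial representation in $\sigma\otimes\sigma'$ in terms of that decomposition, using Lemma \ref{l:1SRGI} together with Schur's lemma. Write $\sigma=\bigoplus_{i=1}^n\rho_i$ with each $\rho_i$ irreducible (here $n\geq 1$, since otherwise $\sigma\otimes\sigma'=0$ would not contain $\iota_G$). Then $\sigma'=\bigoplus_{j=1}^n\rho_j'$ and hence $\sigma\otimes\sigma'=\bigoplus_{i,j=1}^n\rho_i\otimes\rho_j'$, so the multiplicity of $\iota_G$ in $\sigma\otimes\sigma'$ equals $\sum_{i,j=1}^n\dim\Hom_G(\rho_i\otimes\rho_j',\iota_G)$.

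Next I would evaluate each summand. Applying Lemma \ref{l:1SRGI} with $\sigma_1=\rho_i$ and $\sigma_2=\rho_j'$, and using the canonical identification of a space with its bidual so that $(\rho_j')'=\rho_j$, we get
\[
\dim\Hom_G(\rho_i\otimes\rho_j',\iota_G)=\dim\Hom_G(\rho_i,(\rho_j')')=\dim\Hom_G(\rho_i,\rho_j).
\]
Since $\rho_i$ and $\rho_j$ are irreducible, Schur's lemma gives $\dim\Hom_G(\rho_i,\rho_j)=1$ if $\rho_i\sim\rho_j$ and $0$ otherwise. Therefore the multiplicity of $\iota_G$ in $\sigma\otimes\sigma'$ equals $\bigl|\{(i,j):1\le i,j\le n,\ \rho_i\sim\rho_j\}\bigr|$.

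Finally, this cardinality is at least $n$, because the $n$ diagonal pairs $(i,i)$ always contribute. By hypothesis this multiplicity is exactly $1$, so $n=1$, i.e.\ $\sigma=\rho_1$ is irreducible. I do not anticipate a genuine obstacle here: the only points requiring care are the bidual identification when invoking Lemma \ref{l:1SRGI} and the observation that the diagonal terms alone force $n\le 1$; everything else is a routine count.
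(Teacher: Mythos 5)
Your proof is correct and follows essentially the same route as the paper: decompose $\sigma$ into irreducibles and observe that each diagonal term $\rho_i\otimes\rho_i'$ contributes a copy of the trivial representation, forcing the number of irreducible summands to be $1$. The only (harmless) difference is that you justify the diagonal contribution via Lemma \ref{l:1SRGI} and Schur's lemma, whereas the paper invokes Frobenius reciprocity applied to \eqref{penali}; your version also computes the exact multiplicity as the number of pairs $(i,j)$ with $\rho_i\sim\rho_j$, which is more than needed but entirely correct.
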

\begin{proof}
Let $\sigma=\oplus_{i=1}^m\rho_i$ denote the decomposition into irreducibles of $\sigma$. By applying Frobenius
reciprocity to \eqref{penali} we deduce that each $\rho_i\otimes\rho_i'$ contains the trivial representation exactly once.
Then $\sigma\otimes\sigma'$, which contains $\rho_i\otimes\rho_i'$ for all $i=1,2,\dotsc,m$, also contains at least $m$ copies of the trivial representation. By our assumptions, this forces $m=1$, yielding the irreducibility of $\sigma$.
\end{proof}

\begin{theorem}
The following conditions are equivalent:
\begin{enumerate}[{\rm (a)}]

\item\label{ngeq3}
There exists an integer $n\geq 3$ such that $\sum_{g\in G}\zeta_\tau(g)^{n+1}=\sum_{g\in G}v(g)^n$ for all $g\in G$.

\item\label{ngeq1}
For all  $n\geq 1$ and $g\in G$ we have $\sum_{g\in G}\zeta_\tau(g)^{n+1}=\sum_{g\in G}v(g)^n$.

\item\label{Gabel}
The group $G$ is abelian and $\tau$ is the identity.

\end{enumerate}
\end{theorem}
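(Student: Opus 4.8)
The plan is to prove the chain of implications $(c) \Rightarrow (b) \Rightarrow (a)$ and then $(a) \Rightarrow (c)$, using the machinery already developed. The implications $(c) \Rightarrow (b)$ is essentially trivial: if $G$ is abelian and $\tau = \id$, then $v(g) = |G|$ and $\zeta_\tau(g) = |\{h : h^{-1}h = g\}| = |G|$ if $g = 1_G$ and $0$ otherwise, so $\sum_g \zeta_\tau(g)^{n+1} = |G|^{n+1} = |G| \cdot |G|^n = \sum_g v(g)^n$ for every $n \geq 1$; this also gives $(b) \Rightarrow (a)$ for free (take any $n \geq 3$). So the real content is $(a) \Rightarrow (c)$.

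For $(a) \Rightarrow (c)$, I would first invoke Lemma \ref{n0n}: the hypothesis in $(a)$ for some fixed $n \geq 3$ implies the equality $\sum_g \zeta_\tau(g)^{n+1} = \sum_g v(g)^n$ holds for all $n \leq n_0$, hence in particular for $n = 2$ and $n = 3$. By Corollary \ref{c:10SR2} the case $n = 2$ already tells us that $G$ is $\tau$-simply reducible, so every $\rho \in \widehat G$ satisfies $\rho^\tau \sim \rho$ and all Kronecker products $\rho_1 \otimes \rho_2$ are multiplicity-free. Now I want to extract from the $n = 3$ equality (and the fact it propagates to all $n \leq n_0$, i.e. the $\gamma_{n}$-orbits on $G^n$ are all $\tau_n$-invariant for $n$ up to $n_0 \geq 3$) the much stronger conclusion that $G$ is abelian. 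The combinatorial translation via Theorem \ref{t:6SR2} and Theorem \ref{t:7SR2} is: for every choice of $g_1, g_2, g_3 \in G$ there is a single $g \in G$ with $\tau(g_i) = g g_i g^{-1}$ simultaneously for $i = 1,2,3$. I would exploit this by a counting/dimension argument on the representation side: the analogue of \eqref{penali} or a three-fold version, combined with Lemma \ref{lemmabanale}, should force every irreducible to be one-dimensional. Concretely, $\tau$-simple reducibility together with $C_\tau(\rho)^3 = 1$ for every constituent $\rho$ of $\rho_1 \otimes \rho_2$ — pushed through the $n=3$ identity — constrains the number of irreducibles and their multiplicities in triple products so tightly that $\sum_{\rho} (\dim \rho)^2 = |G|$ can only be reconciled with all $\dim \rho = 1$; once all irreducibles are linear, $G$ is abelian, and then $\rho^\tau \sim \rho$ for one-dimensional $\rho$ forces $\chi_\rho(\tau(g)) = \chi_\rho(g)$ for all characters, whence $\tau(g) = g$ for all $g$, i.e. $\tau = \id$.

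Alternatively — and this is probably the cleaner route — I would argue directly from the orbit condition. Fix $n = 3$: for all $g_1, g_2, g_3$ there is $g$ with $g g_i g^{-1} = \tau(g_i)$ for all three indices. Taking $g_1 = g_2 = g_3 = x$ gives, for each $x$, some $g_x$ with $g_x x g_x^{-1} = \tau(x)$; but taking $g_1 = x$, $g_2 = y$, $g_3 = 1_G$ forces the conjugating element to also satisfy $g \cdot 1 \cdot g^{-1} = \tau(1) = 1$ vacuously, so the real constraint is that a \emph{single} $g$ simultaneously conjugates $x$ to $\tau(x)$ and $y$ to $\tau(y)$ for every pair $x, y$. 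Applying this with the full strength from Lemma \ref{n0n} (all $n \leq n_0$, and $n_0 \geq 3$) one sees that a single element $g$ conjugates every $g_i$ from an arbitrarily long tuple to its $\tau$-image; letting the tuple exhaust $G$, there is one fixed $g_0 \in G$ with $g_0 h g_0^{-1} = \tau(h)$ for all $h \in G$. Then $\tau$ is an inner automorphism composed with nothing, but $\tau$ is an anti-automorphism, so $\tau(h_1 h_2) = \tau(h_2)\tau(h_1)$ gives $g_0 h_1 h_2 g_0^{-1} = g_0 h_2 g_0^{-1} g_0 h_1 g_0^{-1}$, i.e. $h_1 h_2 = h_2 h_1$ for all $h_1, h_2$, so $G$ is abelian; and then $\tau(h) = g_0 h g_0^{-1} = h$, so $\tau = \id$, giving $(c)$.

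The main obstacle I anticipate is justifying the passage "for each finite tuple there is a conjugator, therefore there is one conjugator that works for all of $G$ at once." This does not follow merely from having the property for tuples of length $3$; it is exactly why Lemma \ref{n0n} is invoked to get it for all $n \leq n_0$ with $n_0 \geq 3$ — but note $n_0$ might equal $3$, so a priori we only control tuples of length $\le 3$. The fix is to observe that the conjugator realizing $\tau$ on $\{g_1, g_2\}$ is unique up to the centralizer of $\langle g_1, g_2\rangle$, and by a compactness/intersection argument over all pairs (using that $G$ is finite and the $n=3$ condition lets one "glue" the pair-conjugators consistently through a common third coordinate) one produces a global $g_0$; making this gluing argument airtight, rather than hand-wavy, is the delicate step and is where I would spend the most care. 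If that proves awkward, the representation-theoretic route — deducing all irreducibles are linear from $\tau$-simple reducibility plus the $n=3$ count, then concluding abelianness — is the fallback.
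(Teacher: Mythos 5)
Your reductions are fine up to the decisive point: (c) $\Rightarrow$ (b) $\Rightarrow$ (a) is handled correctly, and via Lemma \ref{n0n} together with Theorems \ref{t:6SR2} and \ref{t:7SR2} you correctly translate hypothesis (a) into the statement that for every triple $(g_1,g_2,g_3)\in G^3$ there is a \emph{single} $g\in G$ with $gg_ig^{-1}=\tau(g_i)$ for $i=1,2,3$. But neither of your two routes from there to (c) is actually a proof. The representation-theoretic route is only a sketch: ``constrains \dots so tightly that $\sum_\rho (\dim\rho)^2=|G|$ can only be reconciled with all $\dim\rho=1$'' is exactly the step that needs an argument, and you never supply it. (The paper supplies it as follows: the $n=3$ equality makes $(G^4,\widetilde{G}^4)$ a Gelfand pair, so $\Res^{G^4}_{\widetilde{G}^4}\left[(\rho\boxtimes\sigma)\boxtimes(\rho'\boxtimes\sigma')\right]\cong\rho\otimes\sigma\otimes\rho'\otimes\sigma'$ contains the trivial representation at most once; by Lemma \ref{lemmabanale} this forces $\rho\otimes\sigma$ to be irreducible for \emph{all} $\rho,\sigma\in\widehat{G}$, in particular $\rho\otimes\rho'\sim\iota_G$, whence $d_\rho=1$ and $G$ is abelian.) Your second route has a gap that you yourself flag and do not close: pairwise or triple-wise nonemptiness of the cosets $C_x=\{g\in G: gxg^{-1}=\tau(x)\}$ of the various centralizers does not imply $\bigcap_{x\in G}C_x\neq\emptyset$; there is no Helly-type theorem for cosets of subgroups that would make your ``gluing'' work, and Lemma \ref{n0n} only propagates the hypothesis \emph{downwards} to $n\leq n_0$, so you genuinely control only tuples of bounded length.

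The irony is that your direct route can be closed in one line that you missed: apply the triple condition to $(g_1,g_2,g_3)=(x,y,xy)$. The common conjugator $g$ then satisfies $\tau(x)\tau(y)=(gxg^{-1})(gyg^{-1})=g(xy)g^{-1}=\tau(xy)=\tau(y)\tau(x)$; since $\tau$ is surjective, $G$ is abelian, and then the condition $\tau(x)=gxg^{-1}=x$ gives $\tau=\Id_G$. No global conjugator is needed. As written, however, your proposal does not contain a complete proof of (a) $\Rightarrow$ (c).
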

\begin{proof}
Clearly, \eqref{ngeq1} implies \eqref{ngeq3} and, by Corollary \ref{c:9SR2}, \eqref{Gabel} implies \eqref{ngeq1}. Now assume \eqref{ngeq3}. By Lemma \eqref{n0n} the identity is verified also for $n=3$ and therefore Corollary \ref{c:9SR2} ensures that the double $\widetilde{G}^4$ cosets in $G^4$ is $\tau_3$-invariant. Then we may apply the Mackey-Gelfand criterion (Theorem \ref{t:4SRG1}) deducing that $(G^4,\widetilde{G}^4)$ is a Gelfand pair. 
It follows that if $\rho,\sigma,\theta,\xi\in\widehat{G}$ then 
$\rho\boxtimes\sigma\boxtimes\theta\boxtimes\xi$ is $G^4$-irreducible and its multiplicity in $\text{Ind}_{\widetilde{G}^4}^{G^4}\iota_{\widetilde{G}^4}$ is either $0$ or $1$. In particular, the representation
\begin{equation}\label{Res4}
\text{Res}_{\widetilde{G}^4}^{G^4}[(\rho\boxtimes\sigma)\boxtimes(\rho'\boxtimes\sigma')]
\end{equation}
(which, modulo the identification of $\widetilde{G}^4$ and $G$, is equivalent to the $G$-representation $\rho \otimes
\sigma \otimes \rho' \otimes \sigma'$) contains the trivial representation at most once. Therefore, by Lemma \ref{lemmabanale}, $\rho\otimes\sigma$ is $G$-irreducible. Note that this holds {\em for all} $\rho,\sigma\in\widehat{G}$. In particular, $\rho\otimes\rho'$ is irreducible and contains the trivial representation.
Thus  $\rho\otimes\rho' \sim \iota_G$ forcing $\rho$ to be one-dimensional. It follows
that $G$ is abelian (see \cite[Exercise 3.9.11 or Section 9.2]{book}). 

Moreover, by \eqref{penali} (with $G^2$ in place of $G$, so that $\widetilde{G^2}^2 = \{(g,h,g,h):g,h \in G\}$), the representation
\[
\text{Res}^{G^4}_{\widetilde{G^2}^2}[(\rho\boxtimes\sigma)\boxtimes(\rho'\boxtimes\sigma')]
\]
contains the trivial $\widetilde{G^2}^2$-representation exactly once. 
By further restricting to the subgroup $\widetilde{G}^4$ we deduce that \eqref{Res4} contains the trivial
$\widetilde{G}^4$-representation exactly once.

By Frobenius reciprocity, this implies that $(\rho\boxtimes\sigma)\boxtimes(\rho'\boxtimes\sigma')$ is contained in $\text{Ind}_{\widetilde{G}^4}^{G^4}\iota_{\widetilde{G}^4}$ (with multiplicity one). Then Theorem \ref{t:3MF}.(d) ensures that
\[
1 = C_\tau\left((\rho\boxtimes\sigma)\boxtimes(\rho'\boxtimes\sigma')\right) = C_\tau(\rho)^2C_\tau(\sigma)^2,
\]
where the second equality follows from Proposition \ref{p:5} and Corollary \ref{ppii}.
This implies $C_\tau(\rho)=\pm1$ so that Theorem
\ref{thmCtC}.(1) ensures the equivalence $\rho\sim\rho^\tau$, for all $\rho\in \widehat{G}$. Since all these representations are one-dimensional, the latter means exactly that $\rho(g)=\rho(\tau(g))$ for all $g\in G$ and $\rho\in\widehat{G}$. Since $G$ is abelian (so that $\widehat{G}$ separates the elements in $G$), this in turn gives $\tau = \Id_G$.
\end{proof}

Note that if in the preceding theorem $\tau$ is the inversion, then $G$ must be a direct product of cyclic groups
of order two.

\section{An example: the Clifford groups}
\label{s:Clifford}
In this section, as an application of Mackey's criterion (Theorem  \ref{t:4SRG1}) and, independently,
of the Mackey-Wigner criterion (Corollary \ref{c:10SR2}), we show that the Clifford groups $\CL(n)$ are $\tau$-simply reducible (where the involutive anti-automorphism $\tau$ of $\CL(n)$ is suitably defined according to the congruence  class of $n$ modulo $4$).

The \emph{Clifford group} $\CL(n)$, $n \geq 1$, is the group generated by the elements 
$\varepsilon, \gamma_1, \gamma_2, \ldots, \gamma_n$ with defining relations (called \emph{Clifford relations})
\begin{equation}
\label{e:cliff-rel}
\begin{split}
\varepsilon^2 & = 1\\
\gamma_i^2 & = 1\\
\gamma_i\gamma_j & = \varepsilon \gamma_j\gamma_i\\
\end{split}
\end{equation}
for all $i,j=1,2,\ldots,n$ such that $i \neq j$. For $n=1$ one should also add the relation
\[
\varepsilon \gamma_1 = \gamma_1 \varepsilon.
\]
(Note that for $n \geq 2$ the relations $\varepsilon \gamma_i = \gamma_i \varepsilon$, $i=1,2,\ldots,n$,
are easily deduced from \eqref{e:cliff-rel}).  

The \emph{Clifford-Littlewood-Eckmann group} $G_{s,t}$, $s, t \in \NN$,  is the group with
generators $\varepsilon, a_1,a_2,\ldots,a_s, b_1, b_2,\ldots,b_t$ and the following defining relations: 
$\varepsilon^2=1$; $a^2_i=\varepsilon$, $b^2_j=1$, $b_j\varepsilon=\varepsilon b_j$, and $a_ib_j=\varepsilon b_ja_i$
for all $i,j$; and $a_ia_j=\varepsilon a_ja_i$ and $b_ib_j=\varepsilon b_jb_i$ for all $i\neq j$ (see \cite{lam}). 
It can be easily shown that $G_{s,t}$ is a finite group of order $2^{s+t}$.    
Note also that $G_{0,n} = \CL(n)$ for all $n \in \NN$.

The groups $G_{s,t}$ are implicit in W.K. Clifford's work on ``geometric algebra'' \cite{cliffordWK}. Indeed,
$G_{s,t}$ appears naturally as a subgroup of the group of units of the Clifford algebra $C(\varphi_{s,t})$ of the quadratic form $\varphi_{s,t} := s \langle -1\rangle \perp t\langle 1\rangle$ over any field of characteristic 
$\neq 2$. They where explicitly defined by D.E. Littlewood \cite{littlewood} in 1934.
These groups are of great interest to theoretical physicists. 
For example, $G_{0,3} = \CL(3)$ is the group generated by the three (Hermitian) \emph{Pauli spin matrices}
(coming from the commutation relations between angular momentum operators in the study of the spin of the electron)
and $G_{0,4}= \CL(4)$ is  the \emph{Dirac group}, generated by the four (Hermitian) \emph{Dirac matrices}
(defined by Dirac  \cite{dirac} in his study of the relativistic wave equation). 
More generally, the groups $G_{0,2n} = \CL(2n)$, $n \geq 1$, arise naturally in quantum field theory (e.g. in the theory of Fermion fields): originally they were introduced by Jordan and Wigner in their paper on {\it Pauli's Exclusion Principle} \cite{JW}. Using Frobenius-Burnside theory of finite group representations (cf.
\cite[Section 3.11]{book}) they determined all irreducible representations of these groups: apart the
$2^n$ one-dimensional representations, $G_{0,2n}$ has only one irreducible representation  
(of dimension $2^n$).                       

For the sake of completeness, we mention that the groups $G_{s,0}$, $s \geq 1$, are also important to
physicists. For instance, they were studied by Jordan, von Neumann and Wigner \cite{JvNW} in connection with
their algebraic formalism for the mathematical foundations of quantum mechanics. Eddington \cite{E1,E2},
in his studies in astrophysics, considered sets of anticommuting matrices and complex representations of the
group $G_{5,0}$. Note also that the groups $G_{s,0}$ play an important role in connection with the 
{\it Hurwitz problem} of composition of quadratic forms \cite{h1,h2,radon}. 
Indeed, Eckmann \cite{E} rediscovered these groups and observed that a set of solutions to the Hurwitz equations over a field $\FF$ corresponds to an $n$-dimensional orthogonal representation $\rho$ of $G_{s,0}$ satisfying $\rho(\varepsilon) = - I_n$. Then, Eckmann determined all irreducible orthogonal representations of $G_{s,0}$ over the real field $\RR$ and deduced a purely group theoretical proof of the Hurwitz-Radon theorem on the composition of sums of squares.

Returning back to our investigations, we shall make use of the following alternative description of the Clifford 
groups (cf. \cite[Chapter 4]{Simon}). Setting $X = \{1,2,\ldots,n\}$, we have  
$\CL(n) = \{\pm\gamma_A: A \subseteq X\}$ with multiplication given by
\begin{equation}
\label{e:mult-clifford}
\varepsilon_1 \gamma_A \cdot \varepsilon_2 \gamma_B = \varepsilon_1 \varepsilon_2 (-1)^{\xi(A,B)} \gamma_{A \triangle B}
\end{equation}
where $\triangle$ denotes the symmetric difference of two sets and  $\xi(A,B)$ equals the number of elements $(a,b) \in A \times B$ such that $a > b$, for all $\varepsilon_1, \varepsilon_2 \in \{1,-1\}$ and $A,B \subseteq X$. Notice that the identity element is given by $1_{\CL(n)} = \gamma_\varnothing$ and
that $(\varepsilon\gamma_A)^{-1} = \varepsilon (-1)^\frac{|A|(|A|-1)}{2}\gamma_{A}$ for all $\varepsilon = \pm 1$
and $A \subseteq X$.

Consider now the map $\tau' \colon \CL(n) \to \CL(n)$ defined by
\begin{equation}
\label{e:tau1}
\tau'(\varepsilon \gamma_A) = \varepsilon (-1)^\frac{|A|(|A|+1)}{2}\gamma_{A}
\end{equation}
and note that $\tau' = \tau_{\inv} \circ \tau''$ where 
\[
\tau''(\varepsilon \gamma_A) = \varepsilon (-1)^{|A|}\gamma_{A}
\] 
for all $\varepsilon = \pm 1$ and $A \subseteq X$.
It is straightforward to check that $\tau''$ is an involutive automorphism of $\CL(n)$ so that
$\tau'$ is an involutive anti-automorphism of $\CL(n)$.
We then set 
\[
\tau = 
\begin{cases}
\tau' & \mbox{ if } n \equiv 3 \mod 4\\
\tau_{\inv} & \mbox{ otherwise}.
\end{cases}
\]

\begin{theorem} 
\label{t:cln-wsr}
The group $\CL(n)$ is $\tau$-simply reducible.
\end{theorem}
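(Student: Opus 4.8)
The plan is to reduce the statement to a concrete computation inside $\CL(n)$. By the Mackey--Wigner criterion (Corollary~\ref{c:10SR2}) together with Corollary~\ref{c:9SR2} applied with $n=2$ --- equivalently, by the Mackey criterion (Theorem~\ref{t:4SRG1}) through the double-coset/conjugation-orbit dictionary underlying Theorem~\ref{t:7SR2} --- the group $\CL(n)$ is $\tau$-simply reducible if and only if every $\gamma_{2}$-orbit of $\CL(n)$ acting by simultaneous conjugation on $\CL(n)^{2}$ is invariant under the componentwise map $\tau_{2}$; that is: for all $a,b\in\CL(n)$ there is $h\in\CL(n)$ with $hah^{-1}=\tau(a)$ and $hbh^{-1}=\tau(b)$. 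I would work throughout in the model $\CL(n)=\{\pm\gamma_{A}:A\subseteq X\}$, $X=\{1,\dots,n\}$, with the product \eqref{e:mult-clifford}.

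Two elementary sign computations carry the burden. First, writing $\gamma_{A}=\gamma_{a_{1}}\cdots\gamma_{a_{k}}$ for $A=\{a_{1}<\dots<a_{k}\}$ and using $\gamma_{c}\gamma_{a}=-\gamma_{a}\gamma_{c}$ for $a\ne c$, conjugation by $\gamma_{C}$ multiplies $\gamma_{A}$ by a scalar and leaves the scalar in front untouched:
\[
\gamma_{C}\,(\varepsilon\gamma_{A})\,\gamma_{C}^{-1}=(-1)^{\,|C||A|-|C\cap A|}\,\varepsilon\gamma_{A}.
\]
Second, from \eqref{e:tau1} and the identity $(\varepsilon\gamma_{A})^{-1}=\varepsilon(-1)^{|A|(|A|-1)/2}\gamma_{A}$, the chosen $\tau$ also only rescales each $\gamma_{A}$ by a sign,
\[
\tau(\varepsilon\gamma_{A})=(-1)^{\delta(A)}\,\varepsilon\gamma_{A},\qquad
\delta(A)=
\begin{cases}
\binom{|A|}{2}\bmod 2 & \text{if }\tau=\tau_{\inv},\\
\binom{|A|+1}{2}\bmod 2 & \text{if }\tau=\tau',
\end{cases}
\]
recalling that $\tau=\tau'$ precisely when $n\equiv 3\pmod 4$. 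Since conjugating by $\pm\gamma_{C}$ gives the same automorphism and every element of $\CL(n)$ is $\pm\gamma_{C}$ for some $C\subseteq X$, the requirement on $(a,b)=(\varepsilon_{1}\gamma_{A},\varepsilon_{2}\gamma_{B})$ becomes: there is $C\subseteq X$ with $|C||A|-|C\cap A|\equiv\delta(A)$ and $|C||B|-|C\cap B|\equiv\delta(B)\pmod 2$. Identifying $C$ with its indicator vector $x\in\FF_{2}^{n}$, the left-hand sides are $\FF_{2}$-linear, namely $|C||A|-|C\cap A|\equiv L_{A}(x):=\sum_{i\in A}(|A|-1)x_{i}+\sum_{i\notin A}|A|\,x_{i}$. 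So everything comes down to solvability of the $\FF_{2}$-linear system $L_{A}(x)=\delta(A)$, $L_{B}(x)=\delta(B)$, for all $A,B\subseteq X$.

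A $2\times n$ system over $\FF_{2}$ has a solution for every right-hand side when $L_{A}$ and $L_{B}$ are linearly independent, and when they are dependent it has one precisely when the right-hand side respects the dependence; so only the degenerate cases need checking. Inspecting the coefficients of $L_{A}$ one finds: $L_{A}=0$ iff $A=\varnothing$, or ($A=X$ and $n$ is odd); and, for $A\ne B$, $L_{A}=L_{B}$ iff $n$ is odd and $B=X\setminus A$. Hence the proof reduces to two parity statements: $\delta(\varnothing)=0$ (immediate), and, for $n$ odd, $\delta(A)=\delta(X\setminus A)$ for all $A$ (which in particular gives $\delta(X)=\delta(\varnothing)=0$). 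This last identity is settled by a one-line computation of $\binom{m}{2}$ (resp.\ $\binom{m+1}{2}$) modulo $2$ as a function of $m\bmod 4$, and it is exactly here that $n\bmod 4$ dictates the definition of $\tau$: when $n\equiv 3\pmod 4$, $\binom{n}{2}$ is odd, so the central element $\gamma_{X}$ satisfies $\gamma_{X}^{-1}=-\gamma_{X}\ne\gamma_{X}$ and no $h$ can conjugate it onto $\tau_{\inv}(\gamma_{X})$ --- this forces the replacement of $\tau_{\inv}$ by $\tau'$, for which $\binom{n+1}{2}$ is even and $\delta(A)=\delta(X\setminus A)$ does hold; when $n\not\equiv 3\pmod 4$, $\tau=\tau_{\inv}$ passes both tests (and for $n$ even the second degenerate case never occurs). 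With these parity checks done, every $\gamma_{2}$-orbit is $\tau_{2}$-invariant, and Corollary~\ref{c:10SR2} (equivalently Theorem~\ref{t:4SRG1}) yields the claim.

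The sole nontrivial point is the bookkeeping of the previous paragraph: pinning down the parities of $\binom{|A|}{2},\binom{|A|+1}{2},\binom{n}{2},\binom{n+1}{2}$ according to $|A|$ and $n$ modulo $4$, and verifying that they make the handful of degenerate instances of the linear system consistent --- which is what singles out $n\equiv 3\pmod 4$ as the residue class requiring $\tau'$ rather than $\tau_{\inv}$. Everything else --- the two sign formulas, the $\FF_{2}$-linearity of $L_{A}$, and the solvability test for a $2\times n$ system --- is routine.
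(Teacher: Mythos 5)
Your argument is correct, and it follows the same overall route as the paper's first proof: reduce via the Mackey criterion (Theorem \ref{t:4SRG1}) and the double-coset/orbit dictionary of Corollary \ref{c:9SR2} to the statement that for all $a,b\in\CL(n)$ some single $h$ conjugates $a$ to $\tau(a)$ and $b$ to $\tau(b)$, and then exploit the fact that both conjugation by $\gamma_C$ and $\tau$ act on each $\gamma_A$ only by a sign (your conjugation sign $(-1)^{|C||A|-|C\cap A|}$ is exactly the paper's formula \eqref{e:18nov}). Where you genuinely add something is in the final step: the paper disposes of it with ``a case-by-case analysis yields the $\tau_3$-invariance of all $\widetilde{G}^3$-orbits'' plus one worked example ($(\gamma_X,\gamma_X)$), whereas you organize the entire case analysis as the solvability of a $2\times n$ linear system $L_A(x)=\delta(A)$, $L_B(x)=\delta(B)$ over $\FF_2$, identify the only degenerate functionals ($L_\varnothing=0$, and $L_X=0$ together with $L_A=L_{X\setminus A}$ when $n$ is odd), and check consistency there by a $\bmod\ 4$ parity computation. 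This is a complete and cleaner version of the paper's sketch, and it has the added virtue of isolating exactly why $n\equiv 3\pmod 4$ forces the replacement of $\tau_{\inv}$ by $\tau'$ (namely $\binom{n}{2}$ odd versus $\binom{n+1}{2}$ even at the degenerate functional $L_X$). The paper also offers a second, independent proof by counting, verifying $\sum_g\zeta_\tau(g)^3=\sum_g v(g)^2=2^{3n+1}$ and invoking the Mackey--Wigner criterion (Corollary \ref{c:10SR2}); you do not need that computation, but it is worth knowing it gives the result with no case analysis at all, at the price of being less explicit about which element conjugates a given pair onto its $\tau$-image.
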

\begin{proof} We present two different proofs: the first one making use of the Mackey criterion (Theorem \ref{t:4SRG1}) and the second one based on the Mackey-Wigner criterion (Corollary \ref{c:10SR2}).\\

\noindent
{\it First proof.} Set $G=\CL(n)$ and let us check that every double coset of $\widetilde{G}^3$ in $G^3$ is $\tau_3$-invariant.
Consider the action $\pi$ of $G^3$ on $G^2$ given by
\[
\pi(g_1,g_2,g_3)(h_1,h_2) = (g_1h_1g_3^{-1}, g_2h_2g_3^{-1})
\]
for all $g_1,g_2,g_3,h_1,h_2 \in G$. Then the stabilizer of the element $(1_G,1_G) \in G^2$ is exactly
$\widetilde{G}^3$ and the double cosets of $\widetilde{G}^3$ in $G^3$ coincide with the $\widetilde{G}^3$-orbits
of $G_2$ under the action $\pi$. 
Let $A,C \subseteq X$. Then
\begin{equation}
\label{e:18nov}
\begin{split}
\gamma_C^{-1} \gamma_A \gamma_C & = (-1)^{\frac{|C|(|C|-1)}{2} + \xi(A,C) + \xi(C, A \triangle C)} \gamma_{C \triangle(A \triangle C)}\\
& = (-1)^{\frac{|C|(|C|-1)}{2} + \xi(A,C) + \xi(C,A) + \xi(C,C)}\gamma_A\\
& =_{*} (-1)^{|A||C| - |A \cap C|}\gamma_A
\end{split}
\end{equation}
where $=_*$ follows from the fact that $\xi(C,C) = \frac{|C|(|C|-1)}{2}$. From
\eqref{e:18nov}, a case-by-case analysis yields the $\tau_3$-invariance of all $\widetilde{G}^3$-orbits. 
As an example, consider the $\widetilde{G}^3$-orbit of the element $(\gamma_X,\gamma_X)$. 
Suppose first that $n$ is odd. Then
\[
\pi(\gamma_C,\gamma_C,\gamma_C)(\gamma_X,\gamma_X) = ((-1)^{(n-1)|C|}\gamma_X,(-1)^{(n-1)|C|}\gamma_X) =
(\gamma_X,\gamma_X)
\]
for all $C \subseteq X$. Therefore the $\widetilde{G}^3$-orbit of $(\gamma_X,\gamma_X)$ reduces
to one point. Now, if $n \equiv 1 \mod 4$ we have $\tau(\gamma_X) = \gamma_X^{-1} = \gamma_X$, while if $n \equiv 3 \mod 4$ we have $\tau(\gamma_X) = -\gamma_X^{-1} = \gamma_X$. 
Similarly, when $n$ is even we have that
the $\widetilde{G}^3$-orbit of $(\gamma_X,\gamma_X)$ is $\{(\gamma_X,\gamma_X),(-\gamma_X,-\gamma_X)\}$ and
$\tau(\gamma_X) = \gamma_X^{-1} = (-1)^\frac{n(n-1)}{2} \gamma_X$. 

Thus, since all double cosets of $\widetilde{G}^3$ in $G^3$ are $\tau_3$-invariant, from the Mackey criterion (Theorem \ref{t:4SRG1}) we deduce that $\CL(n)$ is $\tau$-simply reducible.\\

\noindent
{\it Second proof.} We start by computing the Right Hand Side in \eqref{e:orly} where $G = \CL(n)$. 
First note that for $A \subseteq X$ we have, 
\begin{equation}
\label{e:i-C-even}
v(\gamma_A) = 2^n.
\end{equation}
Indeed, by virtue of \eqref{e:18nov}, we have
\[
v(\gamma_A) = \vert\{\varepsilon \gamma_C: \varepsilon = \pm 1, |C \cap A| \mbox{ is even}\}\vert =
2\cdot 2^{|A|-1} \cdot 2^{n-|A|} = 2^n
\]
if $|A|$ is even, and
\[
\begin{split}
v(\gamma_A) & = \vert\{\varepsilon \gamma_C: \varepsilon = \pm 1, |C \cap A| \mbox{ is even and } |C| \mbox{ is even}\}\vert \\
& \ \ \ \ \ \ \ \ + \vert\{\varepsilon \gamma_C: \varepsilon = \pm 1, |C \cap A| \mbox{ is odd and } |C| \mbox{ is odd}\}\vert\\
& = 2\cdot 2^{|A|-1} \cdot 2^{n-|A|-1} + 2 \cdot 2^{|A|-1} \cdot 2^{n-|A|-1}\\
& = 2^n
\end{split}
\]
if $|A|$ is odd.
We then have
\begin{equation}
\label{e:v2}
\begin{split}
\sum_{g \in G} v(g)^2 & = \sum_{A \subseteq X} v(\gamma_A)^2 +  \sum_{A \subseteq X} v(-\gamma_A)^2\\
\mbox{(since $v(-\gamma_A) = v(\gamma_A)$)} \ \ & = 2 \sum_{A \subseteq X} v(\gamma_A)^2\\
\mbox{(by \eqref{e:i-C-even})} \ \ & = 2 \sum_{A \subseteq X} 2^{2n}\\
& = 2^{3n+1}.
\end{split}
\end{equation}

We now compute the Left Hand Side in \eqref{e:orly}. First observe that by \eqref{e:mult-clifford}
we have that $\zeta_\tau(g) = 0$ if (and only if) $g \neq \pm 1_G$.
Suppose first that $n \equiv 3 \mod 4$. Then we have
\[
\begin{split}
\zeta_\tau(1_G) & = \vert \{\varepsilon \gamma_C: \varepsilon = \pm 1 \mbox{ and } \tau(\varepsilon \gamma_C)^{-1} \varepsilon \gamma_C = 1_G\}\vert\\
& = 2 \vert \{\gamma_C: \tau(\gamma_C)^{-1}\gamma_C = 1_G\}\vert\\
& = 2 \vert \{\gamma_C: (-1)^{|C|} \gamma_C \gamma_C = 1_G\}\vert\\
& = 2 \vert \{\gamma_C: (-1)^{\frac{|C|(|C|+1)}{2}}1_G = 1_G\}\vert\\
& = 2 \vert \{\gamma_C: |C|(|C|+1) \equiv 0,3 \mod 4\}\vert\\
& = 2 \cdot 2^{n-1} = 2^n.
\end{split}
\]
Analogously, one has
\[
\zeta_\tau(-1_G) = 2 \vert \{\gamma_C: |C|(|C|+1) \equiv 1,2 \mod 4\}\vert = 2^n
\]
so that, alltogether,
\begin{equation}
\label{e:zeta3}
\sum_{g \in G} \zeta_\tau(g)^3 = \zeta_\tau(1_G)^3 + \zeta_\tau(-1_G)^3 = 2^{3n+1}
\end{equation}
On the other hand, if $n \equiv 0,1,2 \mod 4$ we have
\[
\zeta_\tau(1_G) = 2 \vert \{\gamma_C: |C|(|C|-1) \equiv 0,1 \mod 4\}\vert = 2^n
\]  
and
\[
\zeta_\tau(-1_G) = 2 \vert \{\gamma_C: |C|(|C|-1) \equiv 2,3 \mod 4\}\vert = 2^n
\]
thus showing that \eqref{e:zeta3} holds also in this case.
Comparing  \eqref{e:v2} and \eqref{e:zeta3}, from the Mackey-Wigner criterion (Corollary \ref{c:10SR2}) we deduce that
$\CL(n)$ is $\tau$-simply reducible.
\end{proof}

\begin{remark}{\rm 
Let $\rho, \sigma \in \widehat{\CL(n)}$.
In \cite{AM2} we give an explicit decomposition of the tensor product $\rho \otimes \sigma$.
According to Theorem \ref{t:cln-wsr}, this is multiplicity free and, moreover, $\rho \sim \rho^\tau$.}
\end{remark}

\section{The twisted Frobenius-Schur theorem}

Let $N$ be a finite group, $\tau \colon N \to N$ an involutory anti-automorphism, and denote by $\alpha \in
\Aut(N)$ the involutory automorphism defined by $\alpha(n) = \tau(n^{-1})$ for all $n \in N$. 
Consider the semi-direct product 
\begin{equation}\label{aurorazoccola}
G = N \rtimes_\alpha \langle \alpha \rangle.
\end{equation}
In other words,  $G=\{(n, \alpha^\varepsilon): 
n \in N, \varepsilon \in \{0,1\}\}$ and 
\[
(n,\alpha^\varepsilon)(n',\alpha^{\varepsilon'}) =  (n\alpha^\varepsilon(n'), \alpha^{\varepsilon+ \varepsilon'})
\]
for all $n,n' \in N$ and $\varepsilon, \varepsilon' \in \{0,1\}$.
If we identify $N$ with the normal subgroup $\{(n, \alpha^0): n \in N\}$ and we set $h = (1_N,\alpha)$, then $G$ is generated  by $N$ and $h$ and the following relations hold: $h^2 = 1$ and  $hnh = \tau(n)^{-1}$, for all $n \in N$.
We then have the coset decomposition $G = N \coprod hN$. Moreover, we can define the {\it alternating representation} of $G$ (with respect to $N$)  as the one--dimensional   representation $(\varepsilon, \CC)$ defined by 
 \[
 \varepsilon(g) = \begin{cases}
 1 & \mbox{ if $g\in N$}\\
 -1 & \mbox{ otherwise.}
\end{cases}
 \]
 
We define  two actions of $C_2= \{1,-1\}$ on $\widehat{N}$ and $\widehat{G}$ as follows: $1$ acts trivially in both cases;
$-1$ acts on $\widehat{N}$ by $\widehat{N} \ni \sigma \mapsto \ ^h\!\sigma \in \widehat{N}$ where 
\[
^h\!\sigma(n) = \sigma(h^{-1}nh)
\]
 for all $n \in N$; finally, $-1$ acts on $\widehat{G}$ by $\widehat{G} \ni \theta \mapsto  \theta\otimes \varepsilon \in \widehat{G}$.
Clearly, both $\widehat{N}$ and  $\widehat{G}$ are partitioned into their $C_2$-orbits. Moreover every such an orbit
consists of one or two representations.
Let also   
\[
I_G(\sigma) = \{g\in G:\  ^g\!\sigma \sim \sigma\}
\]
be  the in\ae rtia group of $\sigma\in \widehat{N}$ with respect to $G$ (again, $ ^g\!\sigma(n) = \sigma(g^{-1}ng)$) and 
$$
\widehat{G}(\sigma) = \{\theta \in \widehat{G}: \sigma\preceq \Res_N^G\theta\} 
\equiv \{\theta\in \widehat{G}: \theta \preceq \Ind_N^G\sigma \}.
$$ 
The following theorem yields a very natural bijection between 
the orbits of $C_2$ on $\widehat{N}$ and those on $\widehat{G}$.
For the proof we refer to \cite[Theorem 3.1]{Clifford} and \cite[Section III.11]{Simon}.
 
\begin{theorem}\label{t;clifI2}
\begin{enumerate}[{\rm (1)}]
\item If $I_G(\sigma) = N$, then $\theta: = \Ind_N^G\sigma \in \widehat{G}$,  $\theta\otimes \varepsilon = \theta$
and $\Res^G_N\theta = \sigma \oplus ^h\!\sigma$, with $\sigma$ and $ ^h\!\sigma$  not equivalent.
\item If  $I_G(\sigma) = G$, then, taking $\theta \in \widehat{G}(\sigma)$ we have  $\Ind_N^G(\sigma) = \theta \oplus (\theta\otimes \varepsilon)$ with $\theta \not\sim \theta\otimes \varepsilon$ and 
$\Res_N^G\theta = \Res_N^G(\theta\otimes \varepsilon) = \sigma.$
\item The map
\[
\{\sigma,  ^h\!\sigma\} \mapsto  \widehat{G}(\sigma) \ \ \ \ \ \ \ \mbox{ when $I_G(\sigma) = N$}
\]
and 
\[
\{\sigma\}    \mapsto  \widehat{G}(\sigma) \ \ \ \ \ \ \ \ \ \ \ \ \mbox{ when $I_G(\sigma) = G$}
\]
yields a one--to--one correspondence  between  the $C_2$-orbits  on $\widehat{N}$ and on $\widehat{G}$. 
In particular, to each single--element orbit on $\widehat{N}$ (resp. on $\widehat{G}$) there corresponds 
a two--elements orbit on   $\widehat{G}$ (resp. on $\widehat{N}$).
\end{enumerate}
\end{theorem}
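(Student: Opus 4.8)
\emph{Proof proposal.} This is the Clifford correspondence for a normal subgroup of index two, so the plan is to reconstruct it from two ingredients: the elementary Clifford dichotomy for $N \trianglelefteq G$ with $[G:N]=2$, and the fact that an $h$-invariant irreducible of $N$ extends to $G$ because $G/N \cong C_2$ is cyclic. First I would record the dichotomy. Since $N$ is normal of index two, the Mackey subgroup theorem gives $\Res_N^G\Ind_N^G\sigma \cong \sigma \oplus {}^h\!\sigma$ for every $\sigma \in \widehat{N}$, whence by Frobenius reciprocity $\dim\Hom_G(\Ind_N^G\sigma,\Ind_N^G\sigma) = \dim\Hom_N(\sigma,\sigma\oplus{}^h\!\sigma)$ equals $1$ if $\sigma\not\sim{}^h\!\sigma$ (i.e. $I_G(\sigma)=N$) and $2$ if $\sigma\sim{}^h\!\sigma$ (i.e. $I_G(\sigma)=G$); dually, for $\theta\in\widehat{G}$ the restriction $\Res_N^G\theta$ is either irreducible or a sum $\sigma\oplus{}^h\!\sigma$ of two inequivalent conjugate irreducibles. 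In the first case of each dichotomy $\Ind_N^G\sigma$ is irreducible, which already yields most of part (1): setting $\theta=\Ind_N^G\sigma$ we get $\Res_N^G\theta = \sigma\oplus{}^h\!\sigma$ with the two summands inequivalent.

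Next I would prove the extension lemma behind part (2). If $I_G(\sigma)=G$, choose a nonzero $T$ with $T\sigma(n)=\sigma(hnh)T$ for all $n\in N$; since $h^2=1_G$, the operator $T^2$ intertwines $\sigma$ with itself, so $T^2=cI$ by Schur's lemma, and after rescaling $T$ we may assume $T^2=I=\sigma(h^2)$. Then $\theta$ defined by $\theta(n)=\sigma(n)$ and $\theta(hn)=T\sigma(n)$ is a representation of $G$ whose restriction to $N$ is $\sigma$; it is irreducible because $\sigma$ is. Thus $\theta\in\widehat{G}(\sigma)$ and $\Res_N^G\theta=\sigma$.

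Now the behaviour of the twist by $\varepsilon$ and the enumeration of $\widehat{G}(\sigma)$. Because $\varepsilon$ is trivial on $N$ one has $\Res_N^G(\theta\otimes\varepsilon)=\Res_N^G\theta$ for all $\theta$, and $\theta\mapsto\theta\otimes\varepsilon$ is an involution of $\widehat{G}$. In the case $I_G(\sigma)=N$, Frobenius reciprocity shows $\theta=\Ind_N^G\sigma$ is the \emph{unique} irreducible of $G$ whose restriction contains $\sigma$ (with multiplicity one); since $\Res_N^G(\theta\otimes\varepsilon)$ also contains $\sigma$, necessarily $\theta\otimes\varepsilon=\theta$ and $\widehat{G}(\sigma)=\{\theta\}$. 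In the case $I_G(\sigma)=G$, the extension $\theta$ and $\theta\otimes\varepsilon$ both lie over $\sigma$; they are inequivalent, for otherwise $\chi_\theta$ would vanish on $hN$, forcing $1=\langle\theta,\theta\rangle_G=\tfrac12\langle\Res_N^G\theta,\Res_N^G\theta\rangle_N=\tfrac12$, a contradiction since $\Res_N^G\theta=\sigma$ is irreducible. Each of $\theta,\theta\otimes\varepsilon$ occurs in $\Ind_N^G\sigma$ with multiplicity one (Frobenius reciprocity), and comparing dimensions gives $\Ind_N^G\sigma=\theta\oplus(\theta\otimes\varepsilon)$, so $\widehat{G}(\sigma)=\{\theta,\theta\otimes\varepsilon\}$ and $\Res_N^G\theta=\Res_N^G(\theta\otimes\varepsilon)=\sigma$.

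Finally, for part (3): the assignment $\{\sigma,{}^h\!\sigma\}\mapsto\widehat{G}(\sigma)$ (resp. $\{\sigma\}\mapsto\widehat{G}(\sigma)$) is well defined since $\widehat{G}(\sigma)=\widehat{G}({}^h\!\sigma)$, and by the above it carries a two-element orbit on $\widehat{N}$ to a one-element orbit $\{\theta\}$ on $\widehat{G}$ and a one-element orbit on $\widehat{N}$ to a two-element orbit $\{\theta,\theta\otimes\varepsilon\}$ on $\widehat{G}$. Its inverse sends the $C_2$-orbit of $\theta\in\widehat{G}$ to the set of $N$-constituents of $\Res_N^G\theta$, which by the dichotomy is precisely one $C_2$-orbit on $\widehat{N}$; that the two composites are the identity is immediate from the two cases analysed above, and the orbit-size statement follows. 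I expect the only non-bookkeeping step to be the extension lemma — realising an $h$-invariant $\sigma$ inside a genuine representation of $G$ — together with the short character computation separating $\theta$ from $\theta\otimes\varepsilon$; everything else reduces to Mackey's theorem and Frobenius reciprocity.
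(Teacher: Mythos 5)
Your proof is correct and complete; the paper itself gives no argument here but defers to \cite[Theorem 3.1]{Clifford} and \cite[Section III.11]{Simon}, whose proofs run along exactly the lines you use (Mackey's subgroup theorem plus Frobenius reciprocity for the dichotomy, the rescaled intertwiner $T$ with $T^2=I$ to extend an $h$-invariant $\sigma$, and the character computation separating $\theta$ from $\theta\otimes\varepsilon$). Nothing to add.
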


In the following, given an irreducible representation $\sigma$ of $N$, we denote by $\sigma^\tau$  and  $C_\tau(\sigma)$ the associated $\tau$-conjugate representation and the $\tau$-Frobenius-Schur indicator of $\sigma$ (as in \eqref{rhotau} and Definition \ref{rhotau2}; recall that $C = C_{\tau_{\inv}}$). 
As remarked in the Introduction, the following result goes back to Kawanaka and Mastuyama \cite{kawanaka} but
the proof follows the lines in \cite[Exercise 4.5.1]{Bump}.

\begin{theorem}[Twisted Frobenius-Schur theorem]
\label{t1TFS}
Let $\sigma$  be an irreducible representation of $N$ and denote by $\chi_\sigma$ its character. Then 
\begin{equation}\label{H;T2}
\frac{1}{|N|}\sum_{n\in N}\chi_\sigma(\tau(n)^{-1}n) = C_\tau(\sigma).
\end{equation}
\end{theorem}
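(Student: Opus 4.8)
The plan is to recast the twisted Frobenius--Schur sum as the trace of a single operator (an averaging projection composed with the flip of the two tensor factors) and then to identify that trace with $C_\tau(\sigma)$ using Theorem~\ref{thmCtC}. Write $(\sigma,V)$ and recall that $\alpha(n)=\tau(n^{-1})=\tau(n)^{-1}$. By cyclicity of the trace,
\[
\chi_\sigma(\tau(n)^{-1}n)=\mathrm{tr}\bigl(\sigma(\alpha(n))\,\sigma(n)\bigr)=\mathrm{tr}\bigl(\sigma(n)\,\sigma(\alpha(n))\bigr);
\]
equivalently, since $h^2=1$ and $hnh=\tau(n)^{-1}$ in $G=N\rtimes_\alpha\langle\alpha\rangle$, one has $\tau(n)^{-1}n=(hn)^2$, so the left-hand side of \eqref{H;T2} is $\frac{1}{|N|}\sum_{g\in hN}\chi_\sigma(g^2)$, a Frobenius--Schur sum restricted to the non-trivial coset.

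First I would introduce the unitary $N$-representation $\rho$ on $V\otimes V$ given by $\rho(n)=\sigma(n)\otimes\sigma(\alpha(n))$, together with its averaging operator $\Pi=\frac{1}{|N|}\sum_{n\in N}\rho(n)$, which is the orthogonal projection of $V\otimes V$ onto the subspace $(V\otimes V)^N$ of $N$-fixed vectors. Using the elementary identity $\mathrm{tr}\bigl((A\otimes B)\circ P\bigr)=\mathrm{tr}(AB)$, where $P\in\Hom(V\otimes V,V\otimes V)$ is the flip $u\otimes v\mapsto v\otimes u$, one obtains at once
\[
\frac{1}{|N|}\sum_{n\in N}\chi_\sigma(\tau(n)^{-1}n)=\frac{1}{|N|}\sum_{n\in N}\mathrm{tr}\bigl((\sigma(n)\otimes\sigma(\alpha(n)))\circ P\bigr)=\mathrm{tr}(\Pi\circ P),
\]
so the whole problem reduces to evaluating $\mathrm{tr}(\Pi P)$.

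Next I would transport the picture to $\Hom(V',V)$ via the canonical isomorphism $V\otimes V\cong\Hom(V',V)$ (recall $V''=V$), which carries the flip $P$ to the transpose $A\mapsto A^T$ of Subsection~\ref{ss:linear-algebra}. A direct check shows that under this isomorphism $\rho$ becomes the conjugation action $A\mapsto\sigma(n)\,A\,\sigma^\tau(n)^{-1}$ on $\Hom(V',V)$ — here one uses $\sigma^\tau=(\sigma\circ\alpha)'$, i.e.\ $\sigma^\tau(n)=\sigma(\alpha(n^{-1}))^T$ — so that $(V\otimes V)^N$ is carried onto $\Hom_N(\sigma^\tau,\sigma)$. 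Since $\sigma$ is irreducible, Schur's lemma gives $\dim\Hom_N(\sigma^\tau,\sigma)\in\{0,1\}$. If this dimension is $0$, then $\Pi=0$ and, by Theorem~\ref{thmCtC}.(1), also $C_\tau(\sigma)=0$, so both sides of \eqref{H;T2} vanish. If it is $1$, say spanned by $A$, then by the proof of Theorem~\ref{thmCtC}.(1) we have $A^T=C_\tau(\sigma)\,A$ with $C_\tau(\sigma)\in\{\pm1\}$; equivalently, the flip $P$ preserves the line $(V\otimes V)^N$ (Proposition~\ref{p:3.1}.(2)) and acts on it by the scalar $C_\tau(\sigma)$. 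Since $\Pi$ is the orthogonal projection onto that line, $P\Pi=C_\tau(\sigma)\Pi$, and therefore $\mathrm{tr}(\Pi P)=\mathrm{tr}(P\Pi)=C_\tau(\sigma)\,\mathrm{tr}(\Pi)=C_\tau(\sigma)$, which is precisely \eqref{H;T2}.

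The only delicate point is entirely formal: one must verify carefully that under the identification $V\otimes V\cong\Hom(V',V)$ the representation $n\mapsto\sigma(n)\otimes\sigma(\alpha(n))$ corresponds to the conjugation action whose invariant subspace is $\Hom_N(\sigma^\tau,\sigma)$, and that the flip corresponds to the transpose in the sense of Subsection~\ref{ss:linear-algebra}; granting these, the remaining steps are routine. A more group-theoretic route — the one indicated in \cite[Exercise~4.5.1]{Bump} — would instead keep the half-sum form $\frac{1}{|N|}\sum_{g\in hN}\chi_\sigma(g^2)$ and, via the induced-character identity $\chi_{\Ind_N^G\sigma}(g^2)=\chi_\sigma(g^2)+\chi_\sigma(h^{-1}g^2h)$, express it as the ordinary Frobenius--Schur indicator of $\Ind_N^G\sigma$ minus $C(\sigma)$; there the bulk of the work is the case analysis, governed by Clifford theory for index-two subgroups (Theorem~\ref{t;clifI2}), that determines whether the relevant $G$-representation is real, complex, or quaternionic.
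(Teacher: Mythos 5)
Your argument is correct, and it takes a genuinely different route from the paper's proof. You evaluate the left-hand side directly as $\mathrm{tr}(\Pi P)$, where $\Pi$ is the averaging projection for the unitary $N$-representation $n\mapsto\sigma(n)\otimes\sigma(\alpha(n))$ on $V\otimes V$ and $P$ is the flip, and then transport everything to $\Hom(V',V)$ so that the fixed subspace becomes $\Hom_N(\sigma^\tau,\sigma)$ and $P$ becomes the transpose of Subsection~\ref{ss:linear-algebra}; I checked the two ``formal'' identifications you flag and they do hold, and Proposition~\ref{p:3.1}.(2) is exactly what guarantees that the transpose preserves the fixed line, so that $P\Pi=C_\tau(\sigma)\Pi$ and $\mathrm{tr}(\Pi P)=C_\tau(\sigma)$ by the proof of Theorem~\ref{thmCtC}.(1). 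This gives a uniform proof with no case analysis, self-contained within Section~\ref{sec:tau}. The paper instead follows the route you only sketch in your closing sentence: it forms $G=N\rtimes_\alpha\langle\alpha\rangle$, identifies the twisted sum with the portion of the classical Frobenius--Schur sum for $\theta\preceq\Ind_N^G\sigma$ coming from the coset $hN$, and then runs an extended case analysis (according to whether ${}^{h}\sigma\sim\sigma$ and whether $\sigma$ is real, complex or quaternionic) governed by Clifford theory for index-two subgroups (Theorem~\ref{t;clifI2} and the results of \cite{Clifford}), including explicit matrix manipulations with the intertwiners $V$, $W$, $U$. Your approach is shorter and arguably more conceptual; the paper's approach is heavier but is chosen deliberately to exercise the Clifford-theoretic machinery and, as a by-product, relates $C_\tau(\sigma)$ to the classical indicators $C(\sigma)$ and $C(\theta)$ of the representations of $N$ and of the extension $G$ --- information that your operator-trace computation does not produce.
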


\begin{proof}
Let $G$  be as in \eqref{aurorazoccola}.
We distinguish two cases.

$\underline{^h\!\sigma \sim \sigma}$.  In this case, for $\theta \in \widehat{G}(\sigma)$, by Theorem \ref{t;clifI2}.(2) we have   $\theta \not\sim \theta\otimes \varepsilon$ and
 $\Res^G_N\theta = \sigma$, so that  $\chi_\theta(n) = \chi_\sigma(n)$ for all $n \in N$. since $g^2 \in N$ for all $g \in G$, we have 
\[
\begin{split}
\sum_{g\in G}\chi_\theta(g^2) & = \sum_{n \in N}\chi_\sigma(n^2)+  \sum_{n \in N}\chi_\sigma((hn)^2)\\ 
& =  \sum_{n \in N}\chi_\sigma(n^2)+  \sum_{n \in N}\chi_\sigma(\tau(n)^{-1}n).
\end{split}
\]
By the classical Frobenius-Schur theorem \cite[Theorem 9.7.7]{book}, we have
\[
C(\theta) = \frac{1}{|G|}\sum_{g\in G}\chi_\theta(g^2)
\]
and
\[
C(\sigma) = \frac{1}{|N|}\sum_{n\in N}\chi_\sigma(n^2).
\]
Therefore, since $|G| = 2|N|$, we deduce that
\begin{equation}\label{G;T4}
C(\theta) = \frac{1}{2}C(\sigma) + \frac{1}{2|N|}\sum_{n \in N}\chi_\sigma(\tau(n)^{-1}n).
\end{equation}
Suppose that $\sigma$ is self-conjugate.

Denote by $A(n)$ (resp. $A^\tau(n)$), with $n\in N$,  a matrix realization of $\sigma$ (resp. $\sigma^\tau$). Note that $^h\!A(n):=A(h^{-1}nh)$, with $n\in N$,  is a matrix realization of $^h\!\sigma$ (cf. \cite[Lemma 3.1]{Clifford}).
Moreover,  
\begin{equation}\label{2;T4}
A^\tau(n) = \overline{ ^h\!A(n)}
\end{equation}
for all $n \in N$. 
Indeed, for all $n\in N$ we have $ ^h\!\sigma(n) = \sigma(\tau(n)^{-1})$  and therefore 
\begin{equation}\label{fufu}
\sigma^\tau(n) = \sigma[\tau(n)]^T =  ^h\!\sigma(n^{-1})^T = ( ^h\!\sigma)'(n)
\end{equation}
so that $A^\tau(n) = {^h\!A(n^{-1})^T} = \overline{^h\!A(n)}$.
Let also $M(g)$, $g \in G$, denote  a matrix realization of $\theta$ such that $M(n) = A(n)$ for all $n \in N$.
Then, the unitary matrix $V=M(h)$ satisfies $M(nh) = A(n)V$ for all $n \in N$, $V^2 = M(h^2) = M(1_G) = I$ so that 
\begin{equation}
\label{auroraseimia0}
V^* = V \  \mbox{ and } \ V^T = \overline{V}
\end{equation} 
and 
\begin{equation}
\label{auroraseimia}
^h\!A(n) = V^*A(n)V = V A(n) V
\end{equation}
for all $n \in N$. 
Since $\sigma$ is self-conjugate, we can find a unitary matrix $W$ such that
\begin{equation}
\label{auroraseimia2}
\overline{A(n)} = WA(n)W^*
\end{equation}
for all $n \in N$. We therefore obtain
\[
\begin{split}
A^\tau(n) & = \overline{^h\!A(n)} \hspace{2.5cm} \mbox{(by \eqref{2;T4})}\\
& = \overline{VA(n)V} \hspace{2cm} \mbox{(by \eqref{auroraseimia})}\\
& = V^T \ \overline{A(n)} V^T  \hspace{1.5cm}  \mbox{(by \eqref{auroraseimia0})}\\
& = V^T WA(n)W^* V^T \ \ \ \mbox{(by \eqref{auroraseimia2})}
\end{split}
\]
that is, 
\begin{equation}
\label{auroraseimia3}
W^*V^TA^\tau(n) = A(n)W^*V^T
\end{equation}
for all $n \in N$. Since $W^*\overline{V}W = \pm V$ (cf. \cite[Theorem 3.4)]{Clifford}), and  applying also \eqref{auroraseimia0},
we get
\begin{equation}
\label{auroraseimia4}
(W^*V^T)^T = V \overline{W} = \pm \overline{W}\overline{V} = \pm \overline{W}V^T.
\end{equation}
>From Lemma \ref{lemmaAAT}, it follows  that $W\overline{W} = \pm I$. More precisely,
$W\overline{W} = I$ (resp. $W\overline{W} = -I$) if $\sigma$ is real (resp. quaternionic). Combining
with Theorem \ref{thmCtC}, this is equivalent to
\begin{equation}
\label{auroraseimia5}
\overline{W} = C(\sigma) W^*.
\end{equation}
>From \eqref{auroraseimia4} and \eqref{auroraseimia5} we obtain
\begin{equation}
\label{auroraseimia6}
(W^*V^T)^T = \pm C(\sigma) W^* V^T
\end{equation}
where the sign is the same as in $W^*\overline{V}W = \pm V$.
Thus, from \eqref{auroraseimia3} ($W^*V^T$ intertwines $A^\tau$ and $A$)  and \eqref{auroraseimia6} ($W^*V^T$ is symmetric/antisymmetric)  with the same sign therein, we obtain
\begin{equation}
\label{auroraseimia7}
C_\tau(\sigma) = \pm C(\sigma).
\end{equation}
Now, if in \eqref{auroraseimia7} the sign is $+$, then by \cite[Theorem 3.4.(2)]{Clifford} we have
$C(\theta) = C(\sigma)$ and therefore from \eqref{G;T4} and \eqref{auroraseimia7} we deduce that
\eqref{H;T2} is satisfied. On the other hand, if in \eqref{auroraseimia7} the sign is $-$, then
$\theta$ is complex, $C(\theta) = 0$ and $C_\tau(\sigma) = -C(\sigma)$ and from
 \eqref{G;T4} we again deduce \eqref{H;T2}. This completes the proof in the case $\sigma$ is selfconjugate.
 
Suppose now that $\sigma$ is complex. Then by \cite[Theorem 3.4.(1)]{Clifford} we have that
$\theta$ is complex as well. Moreover $\sigma' \not\sim \sigma$ and $^h\!\sigma \sim \sigma$
imply that $\sigma^\tau \equiv\  ^h\!\sigma' \sim \sigma' \not\sim \sigma$ (recall \eqref{fufu}) and therefore $C_\tau(\sigma) = 0$. Again, from \eqref{G;T4} we deduce \eqref{H;T2}.
This completes the proof in the case $\sigma$ is complex and, together with the previous step
completes the proof for the case $^h\!\sigma \sim \sigma$.

We now discuss the remaining case.

$\underline{^h\!\sigma \not\sim \sigma}$. From Theorem \ref{t;clifI2}.(1) we deduce that $\Res^G_N\theta = \sigma\oplus ^h\!\sigma$ and therefore
\[
\begin{split}
\sum_{g \in G} \chi_\theta(g^2) & = \sum_{n \in N} \chi_\theta(n^2) + \sum_{n \in N} \chi_\theta(\tau(n)^{-1}n)\\
& = \sum_{n \in N} \chi_\sigma(n^2) + \sum_{n \in N} \chi_{^h\!\sigma}(n^2) +
\sum_{n \in N} \chi_\sigma(\tau(n)^{-1}n) + \sum_{n \in N} \chi_{^h\!\sigma}(\tau(n)^{-1}n).
\end{split}
\]
As 
\[
\chi_{^h\!\sigma}(n^2) = \chi_{\sigma}(hn^2h) = \chi_{\sigma}(\tau(n)^{-2})
\]
and
\[
\chi_{^h\!\sigma}(\tau(n)^{-1}n) = \chi_{\sigma}(h\tau(n)^{-1}nh) 
= \chi_\sigma\left(\tau((\tau(n)^{-1}n)^{-1})\right)= \chi_\sigma(n\tau(n)^{-1})
\]
from the fact that $n \mapsto \tau(n)^{-1}$ is an automorphism, we deduce that
\begin{equation}
\label{auroraseimia8}
C(\theta) = C(\sigma) + \frac{1}{|N|} \sum_{n \in N} \chi_\sigma(\tau(n)^{-1}n).
\end{equation}

Suppose that $\sigma$ is real (resp. quaterionionic). Then,
by virtue of \cite[Theorem 3.3.(1)]{Clifford} $\theta$ is real (resp. quaterionionic) as well and
therefore $C(\sigma) = C(\theta)$. Now, if $\sigma \sim \sigma'$, since by hypothesis $^h\!\sigma
\not\sim \sigma$, and \eqref{fufu} holds also in this case, we have $\sigma^\tau \equiv\  ^h\!\sigma' \not\sim \sigma' \sim \sigma$, and therefore $C_\tau(\sigma) = 0$. Then \eqref{H;T2} follows from \eqref{auroraseimia8}.

Suppose now that $\sigma$ is complex. If $\sigma' \not\sim\  ^h\!\sigma$, from \cite[Theorem 3.3.(2)]{Clifford} we deduce that $\theta$ is complex as well. Moreover, $\sigma^\tau 
\equiv\  ^h\!\sigma' \not\sim  \sigma$ and therefore $C(\sigma) = C(\theta) = C_\tau(\sigma) = 0$
and \eqref{H;T2} follows again from \eqref{auroraseimia8}.

Finally, if $\sigma$ is complex and $\sigma' \sim\  ^h\!\sigma$, then 
\cite[Theorem 3.3.(3)]{Clifford} ensures that $\theta$ is selfconjugate. Moreover,
$\sigma^\tau  \equiv\  ^h\!\sigma' \sim \sigma$. We then denote by $U$ an intertwining unitary matrix such that
\begin{equation}
\label{auroraseimia10}
U A^\tau(n) = A(n) U
\end{equation}
for all $n \in N$ ($A(n)$ is as in \eqref{2;T4}). Since $h^2 = 1_G$, from \cite[Theorem 3.3.(3)]{Clifford} we deduce that
$\overline{U}U = \pm I$, that is,
\begin{equation}
\label{auroraseimia11}
U = \pm U^T.
\end{equation}
Now, if in \eqref{auroraseimia11} the sign is $+$, \cite[Theorem 3.3.(3)]{Clifford} ensures that
$\theta$ is real, while \eqref{auroraseimia10} and \eqref{auroraseimia11} give $C_\tau(\sigma) = 1$.
In other words, $C(\sigma) = 0$, $C_\tau(\sigma) = C(\theta) = 1$ and, once more,
\eqref{H;T2} follows from \eqref{auroraseimia8}. Similarly if the sign in \eqref{auroraseimia11} is $-$.
\end{proof}

Recall that $\zeta_\tau(n)$, $n \in N$, denotes the number of elements $m \in N$ such that $\tau(m^{-1})m = n$ (cf. \eqref{athos}).

\begin{corollary}
For all $n \in N$ we have 
\label{c:auroraseimia}
\begin{equation}
\label{auroraseimia12}
\zeta_\tau(n) = \sum_{\sigma \in \widehat{N}} C_\tau(\sigma) \chi_\sigma(n).
\end{equation}
In particular, 
\[
\zeta_\tau(1_N) = \sum_{\substack{\sigma \in \widehat{N}:\\ C_\tau(\sigma) = 1}} d_\sigma -
\sum_{\substack{\sigma \in \widehat{N}:\\ C_\tau(\sigma) = -1}} d_\sigma.
\]
\end{corollary}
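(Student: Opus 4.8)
The plan is to deduce Corollary~\ref{c:auroraseimia} from the Twisted Frobenius--Schur theorem (Theorem~\ref{t1TFS}) by expanding the counting function $\zeta_\tau \colon N \to \CC$ in the orthonormal basis of irreducible characters. The only point that is not automatic is that $\zeta_\tau$ is a class function, so I would establish this first. Fix $g \in N$ and consider the bijection $\phi_g \colon N \to N$, $\phi_g(m) = g\,m\,\tau(g)$. Since $\tau$ is an involutory anti-automorphism, $\tau(\phi_g(m)) = g\,\tau(m)\,\tau(g)$, and hence $\tau(\phi_g(m))^{-1}\phi_g(m) = \tau(g)^{-1}\bigl(\tau(m)^{-1}m\bigr)\tau(g)$. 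Thus $\phi_g$ carries the fibre $\{m : \tau(m)^{-1}m = n\}$ bijectively onto the fibre over $\tau(g)^{-1}n\tau(g)$, giving $\zeta_\tau(n) = \zeta_\tau(\tau(g)^{-1}n\tau(g))$ for every $g$; as $\tau$ is a bijection of $N$, letting $g$ vary this says exactly that $\zeta_\tau(n) = \zeta_\tau(h^{-1}nh)$ for all $h \in N$, i.e.\ $\zeta_\tau$ is a class function.

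Next I would write $\zeta_\tau = \sum_{\sigma \in \widehat{N}} c_\sigma \chi_\sigma$ and compute the coefficients by orthogonality of characters: $c_\sigma = \frac{1}{|N|}\sum_{n \in N}\zeta_\tau(n)\overline{\chi_\sigma(n)}$. Interpreting $\zeta_\tau(n)$ as the number of $m \in N$ with $\tau(m)^{-1}m = n$ and reindexing the double sum by $m$ (each $m$ contributes once, to $n = \tau(m)^{-1}m$), this becomes $c_\sigma = \frac{1}{|N|}\sum_{m \in N}\overline{\chi_\sigma(\tau(m)^{-1}m)} = \overline{\frac{1}{|N|}\sum_{m \in N}\chi_\sigma(\tau(m)^{-1}m)} = \overline{C_\tau(\sigma)}$ by Theorem~\ref{t1TFS}. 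Since $\sigma$ is irreducible, Theorem~\ref{thmCtC}.(1) gives $C_\tau(\sigma) \in \{-1,0,1\} \subseteq \RR$, so $c_\sigma = C_\tau(\sigma)$, which is precisely \eqref{auroraseimia12}.

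Finally, evaluating \eqref{auroraseimia12} at $n = 1_N$ and using $\chi_\sigma(1_N) = d_\sigma$ together with $C_\tau(\sigma) \in \{-1,0,1\}$ yields $\zeta_\tau(1_N) = \sum_{\sigma:\,C_\tau(\sigma)=1} d_\sigma - \sum_{\sigma:\,C_\tau(\sigma)=-1} d_\sigma$, the in particular assertion. I expect the class-function verification to be the main obstacle — it is the one step that genuinely uses the anti-automorphism structure of $\tau$ rather than being a formal consequence of Theorem~\ref{t1TFS} — while the rest is the standard Fourier inversion on the space of class functions of $N$.
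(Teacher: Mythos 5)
Your proposal is correct and follows essentially the same route as the paper: both establish that $\zeta_\tau$ is a class function by an explicit bijection between fibres (the paper uses $m \mapsto \tau(s^{-1})ms^{-1}$, you use $m \mapsto gm\tau(g)$, which is the same computation), and then expand $\zeta_\tau$ in the character basis using Theorem \ref{t1TFS} and the fact that $C_\tau(\sigma)$ is real. No gaps.
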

\begin{proof}
We observe that $\zeta_\tau$ is a central function. Indeed, if $m,n,s \in N$ and 
$\tau(m^{-1})m = n$, then
\[
sns^{-1} = s \tau(m^{-1}) \tau(s)\tau(s^{-1}) m s^{-1} = \tau[\tau(s^{-1}) m s^{-1}]^{-1}\tau(s^{-1}) m s^{-1}.
\]
Therefore the map $m \mapsto \tau(s^{-1}) m s^{-1}$ yields a bijection between the set of
solutions of $\tau(m^{-1})m = n$ and the set of solutions of $\tau(m_s^{-1})m_s = sns^{-1}$.
>From the Frobenius-Schur twisted formula \eqref{H;T2} we deduce
\begin{equation}
\label{auroraseimia13}
\frac{1}{|N|} \sum_{n \in N} \overline{\chi_\sigma(n)}\zeta_\tau(n) = C_\tau(\sigma)
\end{equation}
(observe that $\zeta_\tau(n)$ and $C_\tau(\sigma)$ are both real). Since $\zeta_\tau$ is central,
by the orthogonality relation for characters (cf. \cite[Equation (3.21)]{book}) \eqref{auroraseimia12}
immediately follows from \eqref{auroraseimia13}.
\end{proof}
We are now in position to complete Corollary \ref{c:8SR2} by adding a  third representation theoretic quantity.

\begin{theorem}\label{tulliosenzasperanza}
The following quantities are equal:
\begin{enumerate}[{\rm (a)}]
\item{the number of $\sigma \in \widehat{N}$ such that $\sigma \sim \sigma^\tau$;}
\item{the number of $\tau$-invariant conjugacy classes of $N$;}
\item{$\frac{1}{|N|} \sum_{n \in N} \zeta_\tau(n)^2$.}
\end{enumerate}
\end{theorem}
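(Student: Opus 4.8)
The plan is to take the equality (b) $=$ (c) for granted --- it is precisely Corollary \ref{c:8SR2} applied with $N$ in the role of $G$ --- and to prove that (a) equals (c) by expanding the class function $\zeta_\tau$ in the basis of irreducible characters and invoking the orthogonality (Plancherel) relations.

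First I would record the two ingredients. By Corollary \ref{c:auroraseimia} (cf. \eqref{auroraseimia12}) the function $\zeta_\tau \colon N \to \C$ admits the expansion
\[
\zeta_\tau = \sum_{\sigma \in \widehat{N}} C_\tau(\sigma)\, \chi_\sigma,
\]
and, as shown in the proof of that corollary, $\zeta_\tau$ is a central (class) function. Moreover, by Theorem \ref{thmCtC}.(1) each coefficient $C_\tau(\sigma)$ lies in $\{-1,0,1\}$ and satisfies $C_\tau(\sigma) \neq 0$ if and only if $\sigma \sim \sigma^\tau$; consequently $C_\tau(\sigma)^2 \in \{0,1\}$ and equals $1$ exactly when $\sigma \sim \sigma^\tau$, so that the quantity (a) is precisely $\sum_{\sigma \in \widehat{N}} C_\tau(\sigma)^2$.

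Next I would compute (c). Since $\zeta_\tau$ is central with the Fourier expansion above, the orthogonality relations for irreducible characters yield
\[
\frac{1}{|N|}\sum_{n \in N} |\zeta_\tau(n)|^2 = \sum_{\sigma \in \widehat{N}} |C_\tau(\sigma)|^2 .
\]
Because $\zeta_\tau$ takes nonnegative integer (in particular real) values and each $C_\tau(\sigma)$ is real, the left-hand side is $\frac{1}{|N|}\sum_{n \in N} \zeta_\tau(n)^2 = $ (c), while the right-hand side is $\sum_{\sigma \in \widehat{N}} C_\tau(\sigma)^2 = $ (a) by the previous paragraph. Hence (a) $=$ (c), and combining with Corollary \ref{c:8SR2} all three quantities (a), (b), (c) coincide.

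There is essentially no obstacle: the only point requiring a moment's care is that $\zeta_\tau$ genuinely is a class function, so that the orthogonality relations apply --- but this centrality was already established in the proof of Corollary \ref{c:auroraseimia} (conjugating a solution $m$ of $\tau(m^{-1})m = n$ by $s$ produces a solution of $\tau(m_s^{-1})m_s = sns^{-1}$). As an alternative route one could bypass (c) and prove (a) $=$ (b) directly via Brauer's permutation lemma, using the identity $\chi_{\sigma^\tau}(g) = \chi_\sigma(\tau(g))$ to see that the permutation $\sigma \mapsto \sigma^\tau$ of $\widehat{N}$ and the permutation $C \mapsto \tau(C)$ of the set of conjugacy classes of $N$ have equally many fixed points; but the argument through $\zeta_\tau$ is shorter and entirely self-contained within the present paper.
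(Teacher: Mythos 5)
Your proposal is correct and coincides with the paper's own argument: the paper likewise quotes Corollary \ref{c:8SR2} for (b) $=$ (c) and then derives (a) $=$ (c) by expanding $\zeta_\tau = \sum_{\sigma}C_\tau(\sigma)\chi_\sigma$ via Corollary \ref{c:auroraseimia} and applying the orthogonality relations to get $\frac{1}{|N|}\sum_n\zeta_\tau(n)^2=\sum_\sigma C_\tau(\sigma)^2=|\{\sigma\in\widehat{N}:\sigma^\tau\sim\sigma\}|$. Your remark on the centrality of $\zeta_\tau$ and the identification of $C_\tau(\sigma)^2$ with the indicator of $\sigma\sim\sigma^\tau$ via Theorem \ref{thmCtC}.(1) matches the paper exactly.
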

\begin{proof}
The equality between the numbers in (b) and (c) corresponds to Corollary \ref{c:8SR2}. On the other hand,
>From Corollary \ref{c:auroraseimia} we also have
\[
\begin{split}
\frac{1}{|N|} \sum_{n \in N} \zeta_\tau(n)^2 & = \frac{1}{|N|} \sum_{n \in N} \zeta_\tau(n)
\overline{\zeta_\tau(n)}\\
& = \sum_{\sigma,\rho \in \widehat{N}} C_\tau(\sigma)C_\tau(\rho)  \frac{1}{|N|} \sum_{n \in N}
\chi_\sigma(n) \overline{\chi_\rho(n)}\\
& =_* \sum_{\sigma,\rho \in \widehat{N}} C_\tau(\sigma)C_\tau(\rho) \delta_{\sigma, \rho}\\
& = \vert\{\sigma \in \widehat{N}: C_\tau(\sigma) \neq 0\}\vert \\
& \equiv \vert\{\sigma \in \widehat{N}: \sigma^\tau \sim \sigma\}\vert
\end{split}
\]
where $=_*$ follows from the orthogonality relations for the characters, and therefore we get the equality between (a) and (c).
\end{proof}

\section{The twisted Frobenius-Schur theorem for a Gelfand pair} 
In this section we specialize the results of the previous section to the context of Gelfand pairs.

Let  $(G,K)$  be a Gelfand pair, $X$ and $x_0$ as in Section \ref{s:MFPR} and
\[
L(X) = \bigoplus_{\rho \in I}{ V_\rho}.
\]
the corresponding multiplicity  free decomposition.
By virtue of Frobenius  reciprocity, for each $\rho \in I$,  there exists a unique (modulo a complex factor of modulus $1$) unit vector $v \in V_\rho$ such that $\rho(k) v_\rho = v_\rho$ for all $k \in K$. The \emph{spherical function} associated with $v_\rho$  is the complex valued function $\phi_\rho$ on $G$ defined by 
\[
\phi_\rho(g) = \langle v_\rho, \rho(g)v_\rho\rangle_{V_\rho}
\]
for all $g \in G$. We observe that the spherical function $\phi_\rho$ is bi-$K$-invariant ($\phi_\rho(k_1gk_2) = \phi_\rho(g)$ for all $k_1,k_2 \in K$ and $g \in G$) and recall the following relations between $\phi_\rho$ and the corresponding character $\chi_\rho$ (cf. \cite[Exercise 9.5.8]{book}):
\begin{equation}\label{T15;diamond}
\phi_\rho(g) = \frac{1}{|K|}\sum_{k \in K}\overline{\chi_\rho(gk)} 
\end{equation}
and
\begin{equation}\label{T15;diamond2}
\chi_\rho(g) = \frac{d_\rho}{|G|}\sum_{h \in G}\overline{\phi_\rho(h^{-1}gh)} 
\end{equation}
for all $g \in G$, where $d_\rho = \dim V_\rho$.

\begin{theorem}[Twisted Frobenius-Schur for a Gelfand pair]
\label{t:TFSGP}
Let $\tau:G \to G$ be an involutory anti-automorphism.  Then we have
\begin{enumerate}[{\rm (1)}]
\item{For every $\rho \in I$
\[
\frac{d_\rho}{|G|}\sum_{g \in G}\phi_\rho(\tau(g)^{-1}g) = {C_\tau(\rho)}.
\]}
\item{For $x \in X$ we set $\zeta_\tau(x) = |\{g \in G: \tau(g)^{-1}gx_0 = x\}|$. Then we have
\[
\frac{1}{|G|}\sum_{x \in X}\zeta_\tau(x)^2 = |K|\sum_{\substack{\rho \in I:\\ \rho \sim \rho^\tau}}\frac{1}{d_\rho}.
\]}
\end{enumerate}
\end{theorem}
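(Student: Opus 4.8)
The plan is to deduce both statements from the twisted Frobenius--Schur theorem (Theorem~\ref{t1TFS}) and its Corollary~\ref{c:auroraseimia}, applied with the group $G$ of the Gelfand pair playing the role of $N$, together with the character/spherical--function dictionary \eqref{T15;diamond}--\eqref{T15;diamond2} and the Schur orthogonality relations for matrix coefficients (equivalently, the orthogonality relations for the spherical functions $\phi_\rho$, $\rho\in I$; see, e.g., \cite{book}).

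For part~(1) I would begin from Theorem~\ref{t1TFS}, which here reads $\frac{1}{|G|}\sum_{g\in G}\chi_\rho(\tau(g)^{-1}g)=C_\tau(\rho)$ for each $\rho\in I$, and substitute \eqref{T15;diamond2} to replace $\chi_\rho$ by an average of $\overline{\phi_\rho}$ over conjugates. The key elementary identity is $h^{-1}\bigl(\tau(g)^{-1}g\bigr)h=\tau(g')^{-1}g'$ with $g'=\tau(h)gh$, which is immediate from $\tau(\tau(h)gh)=\tau(h)\tau(g)h$. For each fixed $h$ the map $g\mapsto\tau(h)gh$ is a bijection of $G$, so the inner sum $\sum_{g\in G}\overline{\phi_\rho\bigl(h^{-1}\tau(g)^{-1}gh\bigr)}$ does not depend on $h$; collapsing the outer average over $h$ yields $C_\tau(\rho)=\frac{d_\rho}{|G|}\sum_{g\in G}\overline{\phi_\rho(\tau(g)^{-1}g)}$. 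Finally, using $\overline{\phi_\rho(w)}=\phi_\rho(w^{-1})$ (since $\rho$ is unitary and $v_\rho$ a unit vector) and the substitution $g\mapsto\tau(g)$, one checks that $\sum_{g\in G}\phi_\rho(\tau(g)^{-1}g)$ is real, so the complex conjugation may be dropped; this is the assertion of~(1). (Alternatively, the reality follows from $C_\tau(\rho)\in\{-1,0,1\}$, Theorem~\ref{thmCtC}.)

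For part~(2) I would first pass from the counting function on $X$ to the one on $G$. Writing $x=g'K$ and $\zeta^G_\tau(n)=|\{g\in G:\tau(g)^{-1}g=n\}|$, one has $\zeta_\tau(g'K)=\sum_{k\in K}\zeta^G_\tau(g'k)$, since $\tau(g)^{-1}g\in g'K$ determines a unique $k\in K$ with $\tau(g)^{-1}g=g'k$. Now insert Corollary~\ref{c:auroraseimia}, $\zeta^G_\tau(n)=\sum_{\rho\in\widehat{G}}C_\tau(\rho)\chi_\rho(n)$, and compute $\sum_{k\in K}\chi_\rho(g'k)=|K|\,\mathrm{tr}\bigl(\rho(g')E_\rho\bigr)$, where $E_\rho=\frac{1}{|K|}\sum_{k\in K}\rho(k)$ is the orthogonal projection of $V_\rho$ onto its subspace of $K$-fixed vectors: this trace vanishes for $\rho\notin I$ (no nonzero $K$-fixed vector) and equals $\langle\rho(g')v_\rho,v_\rho\rangle=\overline{\phi_\rho(g')}$ for $\rho\in I$ (rank-one projection onto $\CC v_\rho$). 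Hence $\zeta_\tau(g'K)=|K|\sum_{\rho\in I}C_\tau(\rho)\,\overline{\phi_\rho(g')}$. Since $\zeta_\tau$ is real-valued and each $C_\tau(\rho)$ is real, I would then write $\frac{1}{|G|}\sum_{x\in X}\zeta_\tau(x)^2=\frac{1}{|G|\,|K|}\sum_{g\in G}\zeta_\tau(gK)^2$, expand the square, and apply $\frac{1}{|G|}\sum_{g\in G}\overline{\phi_\rho(g)}\,\phi_\sigma(g)=\delta_{\rho,\sigma}/d_\rho$ for $\rho,\sigma\in I$ to obtain $|K|\sum_{\rho\in I}C_\tau(\rho)^2/d_\rho$. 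By Theorem~\ref{thmCtC}.(1), $C_\tau(\rho)^2=1$ when $\rho^\tau\sim\rho$ and $C_\tau(\rho)^2=0$ otherwise, so this reduces to $|K|\sum_{\rho\in I,\ \rho\sim\rho^\tau}1/d_\rho$, as claimed.

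The routine but slightly delicate part will be the bookkeeping of complex conjugations in \eqref{T15;diamond}--\eqref{T15;diamond2} and in the orthogonality relation, and the verification that the twisted-element sums behave as expected: the conjugation-invariance of $\sum_{g\in G}\overline{\phi_\rho(\tau(g)^{-1}g)}$ in~(1), and the collapse $\sum_{k\in K}\chi_\rho(g'k)=|K|\,\overline{\phi_\rho(g')}$ for $\rho\in I$ (and $0$ otherwise) in~(2). Apart from these, the only genuinely new ingredient relative to the earlier sections is the reduction from characters to spherical functions; the rest is a direct application of Theorem~\ref{t1TFS} and Corollary~\ref{c:auroraseimia}.
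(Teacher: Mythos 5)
Your proposal is correct. Part (1) is essentially the paper's own argument: both start from Theorem \ref{t1TFS}, substitute \eqref{T15;diamond2}, and collapse the double sum via the identity $h^{-1}\tau(g)^{-1}gh=\tau(s)^{-1}s$ with $s=\tau(h)gh$; the paper handles the complex conjugation by writing $C_\tau(\rho)=\frac{1}{|G|}\sum_g\overline{\chi_\rho(\tau(g)^{-1}g)}$ from the outset (licit since $C_\tau(\rho)$ is real), which is the same device you invoke at the end. In part (2) you take a mildly different route to the key expansion $\zeta_\tau(x)=|K|\sum_{\rho\in I}C_\tau(\rho)\,\overline{\phi_\rho(g)}$ (for $x=gK$): you obtain it from Corollary \ref{c:auroraseimia} by summing the character expansion of $\zeta^G_\tau$ over the coset $gK$ and recognizing $\frac{1}{|K|}\sum_{k\in K}\rho(k)$ as the rank-one projection onto $\CC v_\rho$ (zero for $\rho\notin I$), whereas the paper obtains the equivalent expansion from part (1) together with the spherical Fourier inversion formula. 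The two derivations are interchangeable --- your coset-averaging step is essentially a re-proof of \eqref{T15;diamond} combined with Fourier inversion on $G$ --- and both conclude identically via the orthogonality relations for spherical functions and $C_\tau(\rho)^2\in\{0,1\}$. Your version has the small advantage of making part (2) logically independent of part (1), at the cost of invoking Corollary \ref{c:auroraseimia} for all of $\widehat{G}$ rather than only the spherical representations.
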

\begin{proof}
(1) From Theorem \ref{t1TFS} we obtain ($C_\tau(\rho)$ is real)
\[
\begin{split}
C_\tau(\rho) &  =  \frac{1}{|G|}\sum_{g \in G}\overline{\chi_\rho(\tau(g)^{-1}g)}\\
\mbox{(by   \eqref{T15;diamond2})}\ \ & = \frac{d_\rho}{|G|^2}\sum_{g,h \in G}\phi_\rho(h^{-1}\tau(g)^{-1}gh)\\
& = \frac{d_\rho}{|G|^2}\sum_{g,h \in G}\phi_\rho(\tau(\tau(h)gh)^{-1}\cdot \tau(h)gh)\\
\mbox{by setting $s=\tau(h)gh)$)}\ \ & = \frac{d_\rho}{|G|}\sum_{s \in G}\phi_\rho(\tau(s)^{-1}s).
\end{split}
\]

(2) From the previous fact (by setting $\varphi_\rho(x) = \phi_\rho(g)$ if $gx_0 = x$: note that this is well defined by virtue of the bi-$K$-invariance of $\phi_\rho$) we get
\[
C_\tau(\rho) =  \frac{d_\rho}{|G|}\sum_{x \in X}\zeta_\tau(x)\varphi_\rho(x).
\]
Then  the spherical Fourier inversion formula \cite[Equation (4.15)]{book} yields
\[
\zeta_\tau(x) = |K| \sum_{\rho\in I}\varphi_\rho(x)C_\tau(\rho).
\]
Therefore, from the orthogonality relations for spherical functions  \cite[Proposition 4.7.1]{book} we have
\[
\begin{split}
\frac{1}{|G|} \sum_{x\in X}\zeta_\tau(x)^2 & =  \frac{1}{|G|}\sum_{x\in X}\zeta_\tau(x)\overline{\zeta_\tau(x)}\\
& = \sum_{\rho \in I}C_\tau(\rho)^2\frac{|K|^2}{|G|}\cdot \frac{|X|}{d_\rho}\\
& = |K|\sum_{\substack{\rho \in I:\\ \rho^\tau \sim \rho}}\frac{1}{d_\rho}.
\end{split}
\]
\end{proof}

If $\tau(K) = K$ then $\tau$ induces an involution (that we keep denoting by $\tau$) on the set $K\backslash G/K$ of double cosets and therefore on the set $K\backslash X$ of $K$-orbits on $X$. This way, if $g\in G$ and $\Omega_{gx_0}$ is the $K$-orbit containing $gx_0$, then $\tau(\Omega_{gx_0})$ is the $K$-orbit containing  $\tau(g)x_0$.

\begin{theorem}[Twisted Frobenius-Schur for a Gelfand pair II]
Suppose that  $\tau(K)  = K$.  
\begin{enumerate}[{\rm (1)}]
\item{If  $\rho \in I$ then also $\rho^\tau \in I$ and the  number of $\rho \in I$  such that $\rho^\tau \sim \rho$ is equal to the number of $\tau$-invariant $K$-orbits on $X$.}
\item{If $\rho \in I$ and $\rho^\tau \sim \rho$ then $C_\tau(\rho) = 1$.}
\end{enumerate}
\end{theorem}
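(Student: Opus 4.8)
The plan is to show first that $\rho^\tau\in I$, then to deduce the counting statement in (1) from a trace computation in the algebra of bi-$K$-invariant functions, and finally to prove (2) by a short argument using the Riesz map. For the membership: since $(G,K)$ is a Gelfand pair, an irreducible $G$-representation lies in $I$ (up to equivalence) exactly when it has a nonzero $K$-fixed vector. So, given $(\rho,W)\in I$ with $v\in W$, $v\neq 0$, and $\rho(k)v=v$ for all $k\in K$, I would check that $\theta_W v\in W'$ is a nonzero $K$-fixed vector for $\rho^\tau$: using $\rho^\tau(k)=\rho(\tau(k))^T$, the hypothesis $\tau(K)=K$, unitarity of $\rho$, and the $K$-invariance of $v$, one gets $(\rho^\tau(k)\theta_W v)(w)=\langle\rho(\tau(k))w,v\rangle=\langle w,\rho(\tau(k)^{-1})v\rangle=\langle w,v\rangle$ for all $w\in W$. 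Since $\rho^\tau$ is irreducible (Proposition \ref{p:3.1}), this forces $\rho^\tau\in I$.

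To obtain the count in (1), I would pass to the algebra $\mathcal{H}$ of bi-$K$-invariant functions on $G$ and consider the linear involution $\tau^\sharp\colon\mathcal{H}\to\mathcal{H}$, $\tau^\sharp f=f\circ\tau$ (well defined since $\tau(K)=K$). On the basis $\{\mathbf{1}_{KgK}\}$ of double-coset indicators it acts by $\mathbf{1}_{KgK}\mapsto\mathbf{1}_{K\tau(g)K}$, so $\operatorname{tr}\tau^\sharp$ equals the number of $\tau$-invariant double cosets, i.e.\ of $\tau$-invariant $K$-orbits on $X$ (as in the paragraph preceding the theorem). On the other hand, from $\chi_{\rho^\tau}=\chi_\rho\circ\tau$ (because $\operatorname{tr}A^T=\operatorname{tr}A$), formula \eqref{T15;diamond}, and the fact that $\chi_{\rho^\tau}$ is a class function, I would compute $\phi_\rho\circ\tau=\phi_{\rho^\tau}$; since $|I|=|K\backslash G/K|$ for a Gelfand pair, the spherical functions $\{\phi_\rho\}_{\rho\in I}$ form a second basis of $\mathcal{H}$, and on this basis $\tau^\sharp$ is exhibited as the permutation $\rho\mapsto\rho^\tau$, whose trace is $|\{\rho\in I:\rho^\tau\sim\rho\}|$. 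Comparing the two values of $\operatorname{tr}\tau^\sharp$ gives (1).

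For (2), assume $\rho\in I$ and $\rho^\tau\sim\rho$, realized on $W$ and $W'$; let $v\in W$ be a nonzero $K$-fixed vector and $U\colon W'\to W$ a nonzero intertwiner of $\rho^\tau$ and $\rho$. By Theorem \ref{thmCtC}(1) we have $C_\tau(\rho)=\pm1$, and its proof gives $U^T=C_\tau(\rho)\,U$; the task is to exclude the value $-1$. Now $\theta_W v$ is $K$-fixed for $\rho^\tau$ (by the computation above), hence $U\theta_W v$ is a nonzero $K$-fixed vector for $\rho$, so $U\theta_W v=c\,v$ with $c\neq0$ by uniqueness of the spherical vector, whence $(\theta_W v)(U\theta_W v)=\langle U\theta_W v,v\rangle=c\neq0$. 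On the other hand, for every $\xi\in W'$ one has $\xi(U^T\xi)=\xi(U\xi)$ (take $u'=v'=\xi$ in $u'(U^Tv')=v'(Uu')$), so applying this to $\xi=\theta_W v$ yields $(\theta_W v)(U^T\theta_W v)=(\theta_W v)(U\theta_W v)=c$, while directly $(\theta_W v)(U^T\theta_W v)=C_\tau(\rho)\,(\theta_W v)(U\theta_W v)=C_\tau(\rho)\,c$. Hence $C_\tau(\rho)\,c=c$ with $c\neq0$, so $C_\tau(\rho)=1$, as desired.

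None of these steps is deep. The one place that requires genuine care is the identity $\phi_\rho\circ\tau=\phi_{\rho^\tau}$ in the second step, together with the standard fact that the spherical functions are a basis of $\mathcal{H}$: this is exactly what allows the single involution $\tau^\sharp$ to be read off in two different bases and thereby forces the two counts in (1) to coincide. Everything else is routine bookkeeping with the definitions of $\rho^\tau$, the Riesz map, and the transpose.
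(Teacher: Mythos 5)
Your argument is correct in both parts, and the interesting divergence is in part (2). For the membership claim and the identity $\phi_{\rho^\tau}=\phi_\rho\circ\tau$ you are doing exactly what the paper does (the paper's $K$-fixed functional $f_\rho$ is precisely your $\theta_W v_\rho$), and your trace-of-$\tau^\sharp$ computation in the two bases of the bi-$K$-invariant algebra is a clean repackaging of the paper's count: the paper instead expands $|\{\rho\in I:\rho\sim\rho^\tau\}|=\sum_{\rho\in I}\frac{d_\rho}{|G|}\sum_{g}\phi_\rho(g)\overline{\phi_\rho(\tau(g))}$ and swaps the order of summation using the dual orthogonality relations for spherical functions, which is the same change-of-basis content written out explicitly. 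Part (2) is where you genuinely depart from the paper. The paper's proof is global: it invokes Theorem \ref{t:2MF} and Lemma \ref{l:1MF} to get $C_\tau(\lambda)=m_1=|\{\Omega\in K\backslash X:\tau(\Omega)=\Omega\}|$, then writes $C_\tau(\lambda)=\sum_{\rho\sim\rho^\tau}C_\tau(\rho)$ via Proposition \ref{PropCtausum}, and uses part (1) to conclude that a sum of $\pm1$'s equals the number of its terms, forcing every term to be $+1$. Your argument is local and self-contained for each $\rho$: you evaluate the intertwiner $U$ at the spherical functional $\theta_W v$, use one-dimensionality of the $K$-fixed subspace to get $U\theta_W v=cv$ with $c\neq0$, and play the identity $\xi(U^T\xi)=\xi(U\xi)$ against $U^T=C_\tau(\rho)U$ to conclude $C_\tau(\rho)=1$. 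This buys you independence from part (1) and from the orbit-counting machinery of Section \ref{s:MFPR}, and it isolates the reason the sign must be $+$ (an antisymmetric intertwiner would have to vanish on the spherical vector); the paper's route, by contrast, gets (2) almost for free once (1) and the $C_\tau(\lambda)=m_1$ computation are in place, at the cost of being nonconstructive about any individual $\rho$. Both are sound.
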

\begin{proof}
(1) Let us define $f_\rho \in V_\rho'$ by setting
\begin{equation}\label{agelos}
f_\rho(v) = \langle v, v_\rho\rangle_{V_\rho}
\end{equation}
for all  $v \in V_\rho$. Then we have 
\[
\phi_\rho(g) = f_\rho[\rho(g^{-1})v_\rho].
\]
Moreover, $\rho^\tau(k)f_\rho = f_\rho$ for all $k \in K$. Indeed, for all $k \in K$ and $v \in V_\rho$, we have:
\[
\begin{split}
[\rho^\tau(k)f_\rho]v & = f_\rho[\rho(\tau(k))v]\\
& = \langle \rho(\tau(k))v,v_\rho\rangle\\
& = \langle v,\rho(\tau(k))^{-1}v_\rho\rangle\\
\mbox{(since $\tau(K) = K$ and $v_\rho$ is $K$-invariant)}\ \  & = \langle  v , v_\rho\rangle.
\end{split}
\]
This shows that  $\rho^\tau \in I$ because $f_\rho$ is a non-trivial  $K$-invariant vector.
Then, we equip $V_\rho'$ with the scalar product given by duality so that
$\langle f, f_\rho\rangle_{V_\rho'} = f(v_\rho)$ for all $f \in V_\rho'$ and, recalling \eqref{agelos}, we  deduce
that the spherical function $\phi_{\rho^\tau}$ is given by
\[
\phi_{\rho^\tau}(g) = [\rho^{\tau}(g^{-1})f_\rho](v_\rho) = f_\rho[\rho(\tau(g))^{-1}v_\rho] = 
\langle v_\rho, \rho(\tau(g))v_\rho\rangle_{V_\rho} \equiv \phi_\rho(\tau(g))
\]
for all $g \in G$.
Thus,
\begin{equation}\label{T18stella}
\begin{split}
\frac{1}{|G|}\sum_{g \in G}\phi_\rho(g) \overline{\phi_\rho(\tau(g))} & = \frac{1}{|G|}
\sum_{g \in G}\phi_\rho(g)\overline{\phi_{\rho^\tau}(g)}\\
& = 
\begin{cases}
0 & \mbox{if } \rho \not\sim \rho^\tau\\
\frac{1}{d_\rho} &  \mbox{if } \rho \sim \rho^\tau.
\end{cases}
\end{split}
\end{equation}
On the other hand, by virtue of the dual orthogonality relations for spherical functions (follow from \cite[Proposition 4.7.1]{book}),
we get
\begin{equation}\label{T18stella2}
\sum_{\rho\in I}d_\rho\phi_\rho(g)\overline{\phi_\rho(\tau(g))} = 
\begin{cases}
\frac{|X|}{|\Omega_{gx_0}|} &  \mbox{if } \tau(g)x_0 \in \Omega_{gx_0}\\
0  & \mbox{otherwise.}
\end{cases}
\end{equation}
>From \eqref{T18stella} and  \eqref{T18stella2} we deduce 
\[
\begin{split}
|\{\rho\in I: \rho \sim \rho^\tau\}|  & =  \sum_{\rho \in I}\frac{1}{|G|}d_\rho\sum_{g \in G}\phi_\rho(g)\overline{\phi_\rho(\tau(g))}\\
& = \frac{1}{|G|}\sum_{\substack{\Omega \in K\backslash X:\\ \tau(\Omega) = \Omega}}\sum_{\substack{g \in G: \\gx_0 \in \Omega}}
\frac{|X|}{|\Omega|}\\
& = \sum_{\substack{\Omega \in K\backslash X:\\ \tau(\Omega) = \Omega}}\frac{1}{|G|}|K|\cdot |X|\\
& = |\{\Omega \in K\backslash X: \tau(\Omega) = \Omega\}|.
\end{split}
\]

(2)  Consider the permutation representation $(\lambda, L(X))$. By Theorem \ref{t:2MF} and Lemma \ref{l:1MF}.(3) we know that 
$C_\tau(\lambda) = m_1 =  |\{\Omega \in K\backslash X: \tau(\Omega) = \Omega\}|$. By Proposition \ref{PropCtausum} and Theorem \ref{thmCtC}.(1) we have that
$C_\tau(\lambda) = \sum_{\substack{\rho \in I:\\ \rho\sim \rho^\tau}}C_\tau(\rho)$.  Taking into account that, by the previous facts, $C_\tau(\lambda) = |\{\rho \in I: \rho \sim \rho^\tau\}|$, we conclude  that $C_\tau(\rho) = 1$ for all $\rho \in I$ such that $\rho \sim \rho^\tau$.
\end{proof}

\section{Examples}
In this section we review some examples related to our investigations. Note that all the examples discussed below
refer to involutive automorphisms of the given finite group $G$ while our treatement concerns involutive
\emph{anti}-automorphisms. Modulo the composition with the inverse map $\tau_{\text{\rm inv}}: g \mapsto g^{-1}$,
the two approaches are clearly equivalent.

Let us recall that, given a finite group $G$, a  \emph{Gelfand model}, briefly a \emph{model}, for $G$ (a notion introduced by I.N. Bernstein, I.M. Gelfand and S.I. Gelfand in \cite{bgg}) is a representation containing every irreducible representation with multiplicity one.  Models for the finite symmetric and general linear groups were described by A.A. Klyachko \cite{klyachko1, klyachko2}. 
In \cite{inglis3} Inglis, Richardson and Saxl presented a brief and elegant construction 
of an explicit \emph{involution} model for $S_n$ (the term ``involution'' refers to the fact that the model is obtained by summing up induced representations of subgroups which are centralizers of certain involutions). Their work was continued by Baddeley \cite{baddeley} who showed that if a finite group $H$ has an involution model, then the wreath product $H \wr S_n$ also has an involution model for any $n \in \NN$. As a byproduct, he
obtained involution models for Weyl groups of type $A_n$, $B_n$, $C_n$ and $D_{2n+1}$ for all $n \in \NN$. 
Note that the theorem of Frobenius-Schur (cf. Corollary \ref{c:auroraseimia} with $n=1_G$ and $\tau = \tau_\inv$) imposes an obstruction for a group $G$ to have an involution model: $G$ admits an involution model only if
$G$ has only real representations. 

In the spirit of the present paper, Bump and Ginzburg \cite{bump-ginzburg} considered \emph{generalized involution models}: these consist in replacing the involutions (resp. their centralizers) with twisted-involutions (resp. their twisted-centralizers) with respect to some involutive automorphism $\tau$ of the ambient group (thus a model for $G$ is a generalized involution model for $G$ with $\tau$ equal to the identity automorphism of $G$). 
In analogy with the standard involution models, we have the following obstruction: $G$ admits a generalized involution model (with respect to $\tau$) only if $C_\tau(\sigma)=1$ for all $\sigma \in \widehat{G}$.
We remark that the only abelian groups with involution models are $(\ZZ_2)^n$, $n \in \NN$, but every abelian group has generalized involution models. On the other hand, a Coxeter group has an involution model if and only if it has a
generalized involution model.
More recently, Marberg \cite{marberg} proved that if a finite group $H$ has a generalized involution model, then, in analogy to Baddley's main result, the wreath product $H \wr S_n$ also has a generalized involution model for any 
$n \in \NN$. As an application, it is shown that when $H$ is abelian, then $H \wr S_n$ has a model: when $H = \ZZ_r$, 
$r \in \NN$, this recovers a result previously obtained by Adin, Postnikov and Roichman \cite{Adin} (see below).

\begin{itemize}
\item
R.Gow \cite{gow1} considers the general linear group $G=\GL(n,k)$, where $k$ is a field, equipped with the involutory automorphism which sends each matrix $x\in G$ into its transposed inverse $(x^T)^{-1}$ (in our setting,
this corresponds to the involutory anti-automorphism $\tau$ which sends each $x\in G$ into its transposed $x^T$)
and the corresponding semi-direct product denoted $G^+$. It is first shown that every element of $G^+$ is a product of two involutions (Theorem 1) and therefore it is conjugate to its inverse, so that $G^+$ is ambivalent and all its irreducible representations are self-conjugate.  

Let now $k = \FF_q$ be the field with $q$ elements. Suppose $q$ is odd.
In Theorem 2, Gow shows that every irreducible representation of $G^+$ is indeed real. From this result the author deduces (Theorem 3) the formula
\begin{equation}
\label{e:gow}
\sum_{\sigma \in \widehat{G}} \chi_\sigma(g) = \zeta_\tau(g)
\end{equation}
for all $g \in G$. Comparing \eqref{e:gow} and \eqref{auroraseimia12} one deduces that $C_\tau(\sigma) = 1$ for all
$\sigma \in \widehat{G}$. Note that by taking $g=1_G$, the left hand side in \eqref{e:gow} gives the sum of dimensions
of all irreducible representations of $G$, equivalently the dimension of a model of $G$, while the right hand side
gives the number of symmetric matrices in $G$ (Theorem 4). 
One may remark that Theorem 2 can be derived from Theorem 4 using Theorem 1.
Moreover, Theorems 2 and 4 are both valid also for even $q$. In fact, Theorem 4 has been proved independently, for even as well as odd $q$, both by A. A. Klyachko \cite[Theorem 4.1]{klyachko2} and by I. G. Macdonald (unpublished manuscript).

\item
In \cite{gow2}, Gow considers the general linear group $G=\GL(n,\FF_{q^2})$, $q$ a prime power, and its subgroups
$U = \U(n,\FF_{q^2})$ (the unitary group of degree $n$ over $\FF_{q^2}$) and $M = \GL(n,\FF_{q})$. 
The \emph{Frobenius automorphism} $c \mapsto c^q$ of $\FF_{q^2}$ extends to an involutory automorphism $F$ of $G$ (leaving $U$ and $M$ invariant) by raising the entries of a matrix in $G$ to the $q$th power. Then also $F^*$, the composition of $F$ and the transposed inverse, is an involutory automorphism of $G$ (so that $U$ is the $G$-subgroup consisting of $F^*$-fixed elements).
By using these two automorphisms, it is then shown that $(G,U)$ (resp. $(G,M)$) is a Gelfand pair and that
the irreducible subrepresentations of $L(G/U)$ (resp. of $L(G/M)$) are precisely the $F$-fixed (resp. $F^*$-fixed) irreducible representations of $G$. We mention that the $F^*$-fixed representations of $G$ were used by Kawanaka
\cite{kaw} to give a parameterization of the irreducible representations of $U$.                                                  
\item
Inglis, Liebeck and Saxl in \cite{inglis1} consider the group $G_0 = \PSL(n,\FF_q)$ (the projective special linear group
of degree $n$ over $\FF_q$), with $n \geq 8$. Let $G$ be a group with \emph{socle} $G_0$, that is such that $G_0 \triangleleft G \leq \Aut(G_0)$. Then a description of all Gelfand pairs $(G,H)$ with $H$ maximal subgroup of $G$
not containing $G_0$ is given. Moreover, in \cite{inglis2} the authors finds a new model of the general linear group over a finite field (this construction can also be obtained from a result of Bannai, Kawanaka and Song \cite{bannai-KS} but the methods in \cite{inglis2} are independent of and different from theirs). 

\item
Vinroot \cite{vinroot} considers the group $G = \Sp(2n,\FF_q)$ equipped with the involutive automorphism
\[
g \mapsto \begin{pmatrix} -I_n & 0\\0 & I_n \end{pmatrix} g \begin{pmatrix} -I_n & 0\\0 & I_n \end{pmatrix}.
\]
Let us denote by $\tau$ the composition of the above automorphism and $\tau_{\text{\rm inv}}$, so that
\[
\tau(g) = \begin{pmatrix} -I_n & 0\\0 & I_n \end{pmatrix} g^{-1} \begin{pmatrix} -I_n & 0\\0 & I_n \end{pmatrix}.
\]

Observe that when $q\equiv 1$ (mod $4$) then $\tau$ is inner and every irreducible representation of $G$ is self-conjugate. Moreover, when $q\equiv 3$ (mod $4$) then $\tau$ is not inner, there exist irreducible representations of $G$ which are not self-conjugate, but $C_\tau(\sigma) = 1$ for all $\sigma \in \widehat{G}$ (cf. \cite[Theorem 1.3]{vinroot}). As a byproduct, from the analogous formula \eqref{e:gow} which holds in the present setting, Vinroot determines explcitly the dimension of any model of $\Sp(2n,\FF_q)$.

In \cite{vinroot2} Vinroot uses Klyachko's construction of a model for the irreducible complex representations of the finite general linear group $\GL(n,\FF_q)$ we alluded to above to establish, by determining the corresponding Frobenius-Schur number, whether a given irreducible self-conjugate representation of $\SL(n,\FF_q)$, the finite special linear group of degree $n$ over $\FF_q$, is real or quaternionic.

\item 
Adin, Postnikov and Roichman \cite{Adin} study Gelfand models for wreath products of the form $G=\ZZ_r \wr S_n$.
Any element $g$ of $G$ can be expressed uniquely as $g = \sigma v$, where $\sigma \in S_n$ and $v\in {\ZZ_r}^n$. 
Consider the map $\tau \colon G \to G$ given by $\tau(g) = \sigma(-v)$ for all $g = v\sigma\in G$.

An element $g \in G$ is said to be an {\it absolute square root} of another element $h \in G$ provided $g\tau(g) = h$.
Then the main result of this paper asserts that the value of the character associated to the Gelfand model of $G$ on $h$  equals the number $\zeta_\tau(h)$ of absolute square roots of $h$ (cf. \eqref{athos}). This generalizes a result concerning groups possessing only real characters, e.g. $S_n$ (in this case, absolute square roots coincide with square roots). 

\item
Bannai and Tanaka \cite{bannai} consider a finite group $G$, an automorphism $\sigma$ and the corresponding centralizer $K:= C_G(\sigma)$ in $G$, i.e. the subgroup consisting of all elements fixed by $\sigma$. It is well known and easy to see that for $g,h \in G$ the double cosets $KgK$ and $KhK$ are equal if and only if the elements $g\sigma(g)$ and
$h\sigma(h)$ are conjugate in $K$. Then the authors introduce the following condition:
 
\begin{itemize}
\item[$(\star)$] If the elements $g\sigma(g)$ and $h\sigma(h)$ are conjugate in $G$ then they are conjugate in $K$.
\end{itemize}
and showed (Proposition 1) that if $\sigma$ is an involution and condition $(\star)$ holds, then $(G,K)$ is a Gelfand pair. For instance, if $H$ is a finite group, $G = H \times H$, and $\sigma \colon G \to G$ is the {\it flip} defined by $\sigma(h_1,h_2) = (h_2,h_1)$, then $(\star)$ is satisfied and one recovers the well known fact that $(G,K)$ is
a Gelfand pair, where $K = C_\sigma(G)$ is $\widetilde{H}^2 = \{(h,h): h \in H\}$.

Moreover, they provided a list of other interesting examples where the above condition is satisfied.
In particular, when $G$ is the symmetric group $S_n$, with $n \geq 4$, their list exhausts all possible examples.
Other examples from the above mentioned list include some \emph{sporadic groups} as well as some linear groups including:

\begin{enumerate}[{\rm (i)}]
\item $G = \GL(n,\FF_{q^2})$, $K= \GL(n,\FF_{q})$;
\item $G = \GL(n,\FF_{q^2})$, $K= \GU(n,\FF_{q^2})$;                                                       
\item $G = \GL(2n,\FF_{q})$, $K= \Sp(2n,\FF_{q})$.
\end{enumerate}
Moreover they leave it as an open problem to determine whether condition $(\star)$ is satisfied in the case:
\begin{enumerate}
\item[{\rm (iv)}] $G = \GL(2n,\FF_{q})$, $K= \GL(n,\FF_{q^2})$.
\end{enumerate}

\end{itemize}

\section{Open problems and further comments}
Here below we indicate possible extensions and generalization of the results discussed by listing some open problems.

Now we list two open problems that, together with \ref{multfrresub} and \ref{opth74}, suggest that the Mackey-Wigner theory should be a particular case of a more general theory. 

\begin{problem}[\bf Harmonic analysis and tensor products]
{\rm Let $G$ be a finite group and $\tau$ an involutive anti-automorphism of $G$.
Suppose $G$ is $\tau$-simply reducible. What is the relation between the spherical Fourier analysis on the homogeneous space $L(G^3/\widetilde{G}^3)$ (see \cite[Section 4]{book}) and the decomposition of the tensor products?}
\end{problem}

\begin{problem}[\bf Decomposition of tensor products]
{\rm
Suppose $(G^3,\widetilde{G}^3)$ is not a Gelfand pair. Is it possible to find rules that relate the decomposition of $L(G^3/\widetilde{G}^3)$ into irreducible representations with the decomposition of tensor products of irreducible $G$-representations? A possible strategy could be to apply, in this context, the analysis developed in \cite{st1,st2} for permutation representations that decompose with multiplicity. Moreover, a possible application should be to shed light to one of the major open problem in the representation theory of the symmetric group, namely the decomposition of the tensor product of two irreducible representations (usually called {\em Kronecker products}). See \cite[Section 2.9]{JK} for an introduction, \cite{GR} as a classical reference, and \cite{GW} as a recent interesting paper. 
Explicit decompositions of tensor products are also useful in the determination of the lower bound for the rate of convergence to the stationary distibution for diffusion processes on finite groups; see \cite{Dbook} and   \cite[Section 10.7]{book}.}
\end{problem}

\begin{problem}[\bf Characterization of simply reducible groups]
{\rm
The major open problem in the theory of simply (or $\tau$-simply) reducible groups is to give a nice and useful characterization of these groups. This was stated as an open problem in the famous Kourovka notebook \cite{KO}. 
A great advance on this problem is in the recent paper \cite{KC} where the authors show that all $\tau$-simply reducible groups are soluble (this also was an open problem in \cite[Problem 11.94]{KO}, 
posed by Strunkov (see also \cite{strunkov}).}
\end{problem}

\begin{problem}[\bf Multiplicity-free subgroups]\label{multfrresub}
{\rm Let $G$ be a finite group and $H \leq G$ a subgroup.
We say that $H$ is a {\em multiplicity-free subgroup} of $G$ when $\Res^G_H\rho$ decomposes without multiplicity for all $\rho\in\widehat{G}$. See \cite{wigner2}, our book \cite{book2} for its relations with the theory of Gelfand-Tsetlin basis and the Okounkov-Vershik approach to the representation theory of the symmetric group, and \cite{st2} for the not multiplicity-free case. 
In particular, in \cite[Theorem 2.1.10]{book2} we presented a general criterion for the subgroup $H$ being multiplicity-free in terms of commutativity of the algebra ${\mathcal C}(G,H)$ of $H$-conjugacy invariant functions on $G$ and
of the Gelfand pair $(G \times H, \widetilde{H})$. Also, in \cite[Proposition 2.1.12]{book2} we presented the
following sufficient condition: for all $g \in G$ there exists $h \in H$ such that $h^{-1}gh = g^{-1}$.
We then used this criterion to show the well known fact that $S_{n-1}$ is a multiplicity-free subgroup of $S_n$,
the symmetric group of degree $n$ (cf. \cite[Theorem 3.2.1 and Corollary 3.2.2]{book2}).

One of the key facts of the theory is that $H$ is multiplicity-free if and only if $(G\times H,\widetilde{H})$ is a Gelfand pair, where $\widetilde{H}={(h,h):h\in H}$. This is a generalization of \eqref{penali}. Then it should be interesting to examine pairs like $(G\times G\times H, \widetilde{H}^3)$ or $(G\times H\times H, \widetilde{H}^3)$ and their relations with the representation theory of $G$ and $H$.}
\end{problem}

\begin{problem}\label{opth74}
{\rm
Theorem \ref{tulliosenzasperanza} gives a representation theoretical interpretation of the purely group theoretical quantities in Corollary \ref{c:8SR2}. Is there a representation theoretical interpretation of the more general quantities in Theorem \ref{t:7SR2}?}
\end{problem}

\noindent
{\bf Acknowledgments.} We express our deepest gratitude to Hajime Tanaka for interesting discussions.
We also warmly thank the referee for his remarks and suggestions (expecially for providing us with several
useful and important references) which undoubtedly contributed to improve our paper.

%%%%%%%%%%%%%%%%%%%%%%%%%%%%%%%%%%%%%%%%%%%%%%%%%%%%%%%%%%%%%%%%%%%%%%%%%%%%% %%%%%%%%%%%%%%%%%%%%%%%%%%%%%%%%%%%%%%%%%%%%%%%%%%%%%%%%%%%%%%%%%%%%%%%%%%%%% 


\begin{thebibliography}{99}
%%%%%%%%%%%%%%%%%%%%%%%%%%%%%%%%%%%%%%%%%%%%%%%%%%%%%%%%%%%%%%%%%%%%%%%%%%%%% %%%%%%%%%%%%%%%%%%%%%%%%%%%%%%%%%%%%%%%%%%%%%%%%%%%%%%%%%%%%%%%%%%%%%%%%%%%%% %%%%%%%%%%%%%%%%%%%%%%%%%%%%%%%%%%%%%%%%%%%%%%%%%%%%%%%%%%%%%%%%%%%%%%%%%%%%% %%%%%%%%%%%%%%%%%%%%%%%%%%%%%%%%%%%%%%%%%%%%%%%%%%%%%%%%%%%%%%%%%%%%%%%%%%%%% 

 
\bibitem{Adin} R.M. Adin, A. Postnikov, and Yu. Roichman, A Gelfand model for wreath products. {\it Israel J. Math.} {\bf 179} (2010), 381--402.

\bibitem{baddeley} R. W. Baddeley,
Models and involution models for wreath products and certain Weyl groups.
{\it J. London Math. Soc. (2)} {\bf 44} (1991), no. 1, 55--74.

\bibitem{BI} E. Bannai and T. Ito, {\it Algebraic combinatorics. I. Association schemes.} The Benjamin/Cummings Publishing Co., Inc., Menlo Park, CA, 1984. 

\bibitem{bannai-KS} E. Bannai, N. Kawanaka, and S.-Y. Song,
The character table of the Hecke algebra ${\mathcal H}(\GL_{2n}(\FF_q),\Sp_2n(\FF_q))$. 
{\it J. Algebra} {\bf 129} (1990), no. 2, 320--366. 



\bibitem{bannai} E. Bannai and H. Tanaka, On some Gelfand pairs and commutative association schemes.
Appendix to the present paper.

\bibitem{bgg} I.N. Bernstein, I.M. Gelfand, and S.I. Gelfand, Models of representations of compact Lie groups. 
{\it Funkc. Anal. Prilozh.} {\bf 9} (1975),  61--62.
 
\bibitem{Bump} D. Bump, {\it Lie groups}. Graduate Texts in Mathematics, 225. Springer-Verlag, New York, 2004.

\bibitem{butler} P.H. Butler, {\it Point group symmetry applications. Methods and tables.}
Plenum Press, New York-London, 1981.

\bibitem{BK1} P.H. Butler and R.C. King, 
The symmetric group: characters, products and plethysms, {\it J. Mathematical Phys.} {\bf 14} (1973), 1176--1183.

\bibitem{BK2} P. H. Butler and R. C. King, Symmetrized Kronecker products of group representations,
{\it Canad. J. Math.} {\bf 26} (1974), 328--339.

\bibitem{bump-ginzburg} D. Bump and D. Ginzburg, Generalized Frobenius-Schur numbers. \emph{J. Algebra} {\bf 278} (2004), no. 1, 294--313.

\bibitem{CST2} T. Ceccherini-Silberstein, F. Scarabotti and F. Tolli, Trees, wreath products and finite Gelfand pairs, {\it Adv. Math.}, {\bf 206} (2006), no. 2, 503--537.


\bibitem{book} T. Ceccherini-Silberstein, F. Scarabotti and F. Tolli, {\it Harmonic analysis on finite groups:
representation theory, Gelfand pairs and Markov chains.}  
Cambridge Studies in Advanced Mathematics {\bf 108}, Cambridge University Press 2008.

\bibitem{mackey} T. Ceccherini-Silberstein, A. Mach\`\i, F. Scarabotti and F. Tolli, Induced representation and Mackey theory.  {\it J. Math. Sci. (N.Y.)} {\bf{156}} (2009), no. 1, 11--28.


\bibitem{Clifford} T. Ceccherini-Silberstein, F. Scarabotti and F. Tolli, Clifford theory and  applications, {\it J. Math. Sci. (N.Y.)}, {\bf 156}  (2009), no. 1, 29--43.

\bibitem{book2} T. Ceccherini-Silberstein, F. Scarabotti and F. Tolli, {\it Representation theory of the symmetric groups: the Okounkov-Vershik approach, character formulas, and partition algebras.}  
Cambridge Studies in Advanced Mathematics {\bf 121}, Cambridge University Press 2010.

\bibitem{AM2} T. Ceccherini-Silberstein, F. Scarabotti and F. Tolli, {\it Multiplicity-free induced representatons},
in preparation.

\bibitem{cliffordWK} W.K. Clifford, Mathematical Papers, Macmillan, London, 1882. 

\bibitem{derome} J.-R. Derome, Symmetry properties of the $3j$-symbols for an arbitrary group, {\it J. Mathematical
Phys.} {\bf 7} (1966), 612--615.



\bibitem{Dbook} P. Diaconis, {\it Group Representations in Probability and Statistics}, IMS Hayward, CA. 1988.


\bibitem{dirac} P.A.M. Dirac, The quantum theory of the electron, {\it Proc. Royal Soc. (London)}
{\bf 117} (1928), 610--624.

\bibitem{E} B. Eckmann, Gruppentheoretischer Beweis des Satzes von Hurwitz-Radon
\"uber die Komposition der quadratischen Formen, {\it Comment. Math. Helv.} {\bf 15}
(1942/43), 358--366.

\bibitem{E1} A.S. Eddington, On sets of anticommuting matrices, {\it J. London Math. Soc.}
{\bf 7} (1932), 58--68.

\bibitem{E2} A.S. Eddington, On sets of anticommuting matrices, Part II: The Foundations of
E-numbers, {\it J. London Math. Soc.} {\bf 8} (1933), 142--152.

\bibitem{EK} N. Esper and A. Kerber, Permutrization of representations with applications to simple-phase groups and
the enumeration of roots in groups and of conjugacy classes which are invariant under power maps.
Mitt. Math. Sem. Giessen Heft 121 (1976), 181--200.

\bibitem{etingof} P. Etingof, O. Golberg, S. Hensel, T. Liu, A. Schwendner, D. Vaintrob, and E. Yudovina, Introduction to representation theory. With historical interludes by Slava Gerovitch. Student Mathematical Library, 59. American Mathematical Society, Providence, RI, 2011.

  
\bibitem{FS} G. Frobenius and I. Schur,  \"Uber die reelen Darstellungen der endlichen Gruppen,
{\it Sitz.-Ber. Preuss. Akad. Wiss.} (1906), 186--208. 
\bibitem{garsia} A. Garsia, Gelfand pairs in finite groups. Unpublished  MIT manuscript, 1985.


\bibitem{GR} A. M. Garsia an J. Remmel, Shuffles of Permutations and the Kronecker Product, {\it Graphs and Combinatorics} {\bf 1}, (1985), 217--263.

\bibitem{GW} A. Garsia, N. Wallach, G. Xin, and M. Zabrocki, Kronecker coefficients via symmetric functions and constant term identities. {\it Internat. J. Algebra Comput.} {\bf 22} (2012), no. 3, 1250022, 44 pp.

\bibitem{gow1}  R. Gow, Properties of the characters of the finite general linear group related to the transpose-inverse involution. {\it Proc. London Math. Soc. (3)} {\bf 47} (1983), no. 3, 493--506.

\bibitem{gow2}  R. Gow, Two multiplicity-free permutation representations of the general linear group GL$(n,q^2)$. {\it Math. Z.} {\bf 188} (1984), no. 1, 45--54.

\bibitem{griffiths} J.S. Griffiths, The Irreducible Tensor Method for Molecular Symmetry Groups, Dover 2006. 

\bibitem{gross} B. Gross, Some applications of Gel'fand pairs to number theory. 
{\it Bull. Amer. Math. Soc. (N.S.)} {\bf 24} (1991), no. 2, 277--301. 

\bibitem{hamermesh} M. Hamermesh, {\it Group theory and its application to physical problems}, 
Addison-Wesley Series in Physics Addison-Wesley Publishing Co., Inc., Reading, Mass.-London, 1962. 

\bibitem{h1} A. Hurwitz, \"Uber die Komposition der quadratischen Formen von beliebig
vielen Variabein, {\it Nach. Ges. Wiss. G\"ottingen, Math.-Phys. Kl.} (1898), 309--316,
reprinted in Math. Werke II, 565--571.

\bibitem{h2} A. Hurwitz,  \"Uber die Komposition der quadratischen Formen, {\it Math. Ann.} {\bf 88}
(1923), 1--25, reprinted in Math. Werke II, 641--666.                     

\bibitem{inglis1} N.F.J. Inglis, M.W. Liebeck, and J. Saxl, Multiplicity-free permutation representations of finite linear groups. \emph{Math. Z.} {\bf 192} (1986), no. 3, 329--337. 

\bibitem{inglis3} N.F.J. Inglis, R.W. Richardson, and J. Saxl,
An explicit model for the complex representations of $S_n$. 
{\it Arch. Math. (Basel)} {\bf 54} (1990), no. 3, 258--259. 



\bibitem{inglis2} N.F.J. Inglis and J. Saxl, An explicit model for the complex representations of the finite general linear groups. {\it Arch.Math. (Basel)} {\bf 57} (1991), no. 5, 424--431.

\bibitem{JK} G.D. James, and A. Kerber, {\it The Representation Theory of the Symmetric Group}, 
Encyclopedia of Mathematics and its Applications, {\bf 16}, Addison-Wesley, Reading, MA, 1981.

\bibitem{jones} V.F.R. Jones, In and around the origin of quantum groups. Prospects in mathematical physics, 101–-126, Contemp. Math., 437, Amer. Math. Soc., Providence, RI, 2007. 


\bibitem{JW} P. Jordan and E. Wigner, \"Uber das Paulische Aquivalenzverbot, {\it Z. f\"ur Physik}
{\bf 47} (1928), 631--651.

\bibitem{JvNW} P. Jordan, J. von Neumann and E. Wigner, On an algebraic generalization of
the quantum mechanical formalism, {\it Ann. Math.} {\bf 35} (1934), 29--64.


\bibitem{kaw} N. Kawanaka, On the irreducible characters of the finite unitary groups. {\it J. Math. Soc. Japan} {\bf 29} (1977), 425--450.

\bibitem{kawanaka} N. Kawanaka and H. Matsuyama,
A twisted version of the Frobenius-Schur indicator and multiplicity-free permutation representations.
{\it Hokkaido Math. J.}, {\bf 19} (1990), no. 3, 495--508.

\bibitem{KC} L.S. Kazarin, and E.I. Chankov,  Finite simply reducible groups are solvable. (Russian) {\it Mat. Sb.} {\bf 201} (2010), no. 5, 27--40; translation in {\it Sb. Math.} {\bf 201} (2010), no. 5-6, 655--668.
\bibitem{klyachko1} A.A.  Klyachko, Models for complex representations of the groups GL$(n,q)$ and Weyl groups. 
{\it Dokl. Akad. Nauk SSSR} {\bf 261} (1981), no. 2, 275--278.

\bibitem{kazarin-Y} L.S. Kazarin and V.V. Yanishevski$\breve{{\rm \i}}$,
On finite simply reducible groups.{\it Algebra i Analiz} {\bf 19} (2007), no. 6, 86--116; translation in 
{\it St. Petersburg Math. J.} {\bf 19} (2008), no. 6, 931--951 

\bibitem{king} R. C. King, Branching rules far $GL(N) \supset \Sigma_m$ and the evaluation of inner plethysms, 
{\it J. Mathematical Phys.} {\bf 15} (1974), 258--267.

\bibitem{klyachko2} A.A.  Klyachko, Models for complex representations of groups GL$(n,q)$. {\it Mat. Sb. (N.S.)} {\bf 120(162)} (1983), no. 3, 371--386.

\bibitem{kostrikin} A.I. Kostrikin,  {\it Introduction to algebra}. Translated from the Russian by Neal Koblitz. Universitext. Springer-Verlag, New York-Berlin, 1982. 


\bibitem{KO} {\it The Kourovka notebook. Unsolved problems in group theory.} Sixteenth edition. Including archive of solved problems. Edited by V. D. Mazurov and E. I. Khukhro. Russian Academy of Sciences Siberian Division, Institute of Mathematics, Novosibirsk, 2006. 

\bibitem{lam} T.Y. Lam and T. Smith, 
On the Clifford-Littlewood-Eckmann groups: a new look at periodicity mod $8$.
Quadratic forms and real algebraic geometry (Corvallis, OR, 1986).
{\it Rocky Mountain J. Math.} {\bf 19} (1989), no. 3, 749--786.

\bibitem{littlewood} D.E. Littlewood, Note on the anticommuting matrices of Eddington, {\it J.
London Math. Soc.} {\bf 9} (1934), 41--50.

 
\bibitem{mackeypri} G. W. Mackey, On induced representations of groups. {\it Amer. J. Math.}, {\bf 73} (1951), 
576--592. 

\bibitem{mackeysym} G. W. Mackey,  Symmetric and anti symmetric Kronecker squares and intertwining numbers of induced representations of finite groups. {\it  Amer. J. Math.}, {\bf 75} (1953), 387--405.

\bibitem{mackey-mf} G. W. Mackey, Multiplicity free representations of finite groups. {\it Pacific J. Math.}, {\bf 8} (1958), 503--510. 

\bibitem{marberg} E, Marberg,
Generalized involution models for wreath products.
{\it Israel J. Math.} {\bf 192} (2012), no. 1, 157--195.

\bibitem{mckay} J. McKay, Graphs, singularities, and finite groups, The Santa Cruz Conference on Finite Groups (Univ. California, Santa Cruz, California, 1979) 183--186, Proc. Sympos. Pure Math 37, American Mathematical Society, Providence, RI 1980.


\bibitem{OV1} A. Okounkov and A. M. Vershik, A new approach to representation theory of symmetric groups. {\it Selecta Math. (N.S.)} {\bf 2} (1996), no. 4, 581--605.

\bibitem{OV2} A. Okounkov and A. M. Vershik, A new approach to representation theory of symmetric groups. II. (Russian) {\it Zap. Nauchn. Sem. S.-Peterburg. Otdel. Mat. Inst. Steklov. (POMI)}, {\bf 307} (2004), Teor. Predst. Din. Sist. Komb. i Algoritm. Metody. 10, 57--98, 281; translation in {\it J. Math. Sci. (N. Y.)}, {\bf 131} (2005), no. 2, 5471--5494. 

\bibitem{prasad} D. Prasad, Trilinear forms for representations of $\GL(2)$ and local $\varepsilon$-factors.
Compositio Math. 75 (1990), no. 1, 1--46.

\bibitem{racah1} G. Racah, Theory of Complex Spectra. I, {\it Phys. Rev.} {\bf 61} (1942), 186--197.

\bibitem{racah2} G. Racah,  Theory of Complex Spectra. II, {\it Phys. Rev.} {\bf 62}  (1942), 438--462.  

\bibitem{racah-book} G. Racah, Group theory and spectroscopy. 1965 Ergeb. Exakt. Naturwiss., Band 37 pp. 28--84 Springer, Berlin. 

\bibitem{radon} J. Radon, Lineare Scharen orthogonaler Matrizen, {\it Abh. Math. Sem. Univ.
Hamburg} {\bf 1} (1922), 1--14.

\bibitem{clementi} F.Sasaki, M. Sekiya, T.Noro, K. Ohtsuki and Y.Osanai,
Non-Relativistic Configuration Interaction Calculations for Many-Electrons Atoms:ATOMCI
in  E. Clementi ``MOTECC-91: Modern Techniques in Computational Chemistry. Input/Output Documentation'', a special publication by the Department of Scientific Engineering Computations, 48B/MS428, IBM Kingston, 1991. 

\bibitem{st1}      F. Scarabotti and F. Tolli, Harmonic analysis on a finite homogeneous space, {\it Proc. Lond. Math. Soc.}(3) {\bf{100}} (2010),  no. 2, 348--376.

\bibitem{st2}      F. Scarabotti and F. Tolli, Harmonic analysis on a finite homogeneous space II: the Gelfand Tsetlin decomposition,  {\it Forum Mathematicum}  {\bf{22}} (2010), 897--911.

\bibitem{FFultimo} F. Scarabotti and F. Tolli, Induced representations and harmonic analysis on finite groups, arXiv:1304.3366.


\bibitem{Simon} B. Simon, {\it Representations of finite and compact groups}, American Math. Soc., 1996.

\bibitem{strunkov} S.P.  Strunkov, On the ordering of characters of simply reducible groups. (Russian) {\it Mat. Zametki} {\bf 31} (1982), no. 3, 357--362, 473.

\bibitem{vZdV} A.J. van Zanten and E. de Vries, Criteria for groups with representations of the second kind and for simple phase groups. Canad. J. Math. 27 (1975), no. 3, 528--544.

\bibitem{vinroot}  C.R. Vinroot, Twisted Frobenius-Schur indicators of finite symplectic groups. {\it J. Algebra} 
{\bf 293} (2005), no. 1, 279--311.

\bibitem{vinroot2}  C.R. Vinroot, Involutions acting on representations. {\it J. Algebra} {\bf 297} (2006), no. 1, 50--61. 


\bibitem{wigner} E.P. Wigner,  On representations of certain finite groups. {\it Amer. J. Math.} {\bf 63} (1941), 57--63.


\bibitem{wignerbook} E.P. Wigner, {\it Group theory and its application to the quantum mechanics of atomic spectra.} 
Expanded and improved ed. Translated from the German by J. J. Griffin. Pure and Applied Physics. Vol. 5 Academic Press, New York-London, 1959. 

\bibitem{wigner-matrices} E.P Wigner, On the matrices wich reduce the Kronecker products of representations of S.R. groups, in: Quantum Theory of Angular Momentum (L.C.Biedenharn and H.van Dam Eds) Academic Press, New York 1964, 
pp 87--133.

\bibitem{wigner-isotropic} E.P Wigner, On the isotropic harmonic oscillator in``Spectroscopic and Group Theoretical Methods in Physics, Racah Memorial Volume.'' Edited by F. Bloch, S.G. Cohen, A. De-Shalit, S. Sambursky, and I. Talmi. Amsterdam: North-Holland Publishing Co., 1968.

\bibitem{wigner2} E.P. Wigner,  Restriction of irreducible representations of groups  to a subgroups. {\it 
Proc.  Roy. Soc. London Ser. A } {\bf 322} (1971),  no. 1549, 181--189.

\end{thebibliography}
 \end{document}